\documentclass[letterpaper, 12pt]{article}
\usepackage{mathrsfs}
\usepackage[colorlinks, citecolor=blue, linkcolor=blue]{hyperref}
\usepackage{xcolor}

\usepackage{mathscinet}
\usepackage{latexsym}
\usepackage{amsthm}
\usepackage{amssymb}
\usepackage{amsfonts}
\usepackage{amsmath}
\usepackage{comment}

\DeclareMathOperator{\RE}{Re}
\DeclareMathOperator{\IM}{Im}
\def\Enum{\mathbb{E}}
\def\D{\mathcal{D}}

\newcommand{\Rnum}{\mathbb{R}}
\newcommand{\Cnum}{\mathbb{C}}
\newcommand{\abs}[1]{\left\vert#1\right\vert}
\newcommand{\innp}[1]{\langle {#1}\rangle}
\newcommand{\FH}{\mathfrak{H}}
\newcommand{\E}{\mathbb{E}}
\newcommand{\set}[1]{\left\{#1\right\}}
\newcommand{\dif}{\mathrm{d}}
\newcommand{\mi}{\mathrm{i}}
\newcommand{\norm}[1]{\left\Vert#1\right\Vert}
\newcommand\sgn{\mathrm{sgn}}
\newcommand{\N}{\mathbb{N}}

\theoremstyle{plain}
\newtheorem{theorem}{Theorem}[section]
\newtheorem{lemma}[theorem]{Lemma}
\newtheorem{corollary}[theorem]{Corollary}
\newtheorem{proposition}[theorem]{Proposition}

\theoremstyle{definition}
\newtheorem{definition}[theorem]{Definition}

\theoremstyle{remark}
\newtheorem{remark}{Remark}

\begin{document}

\title{\Large Parameter Estimation for Complex $\alpha$-Fractional Brownian Bridge}

\date{}
\author{
  Yong Chen\footnote{School of Big Data, Baoshan University, Baoshan, Yunnan, China}
  \and
  Lin Fang\footnote{School of Mathematics and Statistics, Jiangxi Normal University, Nanchang, Jiangxi, China}
  \and
  Ying Li\footnote{School of Mathematics and Computational Science, Xiangtan University, Xiangtan, Hunan, China}
  \and
  Hongjuan Zhou\textsuperscript{*}\footnote{\textsuperscript{*}Corresponding author. Email: Hongjuan.Zhou@asu.edu. School of Mathematical and Statistical Sciences, Arizona State University, Tempe, Arizona, USA. }
}

\maketitle

\textbf{Abstract:}
We study the statistical inference problem for a complex $\alpha$-fractional Brownian bridge process $Z$ defined by the stochastic differential equation
\[ \mathrm{d}Z_t = -\alpha \frac{Z_t}{T - t} \mathrm{d}t + \mathrm{d}\zeta_t,  \quad t \in [0,  T), \]
with initial condition $Z_0 = 0$, where $\alpha = \lambda - \sqrt{-1}w$, $\lambda > 0$, $w \in \mathbb{R}$ and $\zeta_t$ is a complex fractional Brownian motion. We establish the well-posedness of the fractional Brownian bridge $Z_t$ over the time interval $[0, T]$ for all $H \in (0, 1)$, and prove the strong consistency and the asymptotic distribution for the classic least squares estimator of the parameter \(\alpha\) when \(H \in \left(\frac{1}{2},  1\right)\). The proofs are based on stochastic analysis elements about complex multiple Wiener-It\^o integrals and the complex Malliavin calculus. Unlike the real-valued fractional Brownian bridge considered in the literature, the two-dimensional limiting distribution has non-Cauchy marginal distributions.

\vskip 3mm
\noindent Keywords: fractional Brownian motion; $\alpha$-fractional Brownian bridge; complex Malliavin calculus; complex multiple Wiener-It\^{o} integral.
\vskip 4mm

\section{Introduction}

Statistical inference for one-dimensional stochastic differential equations (SDE) driven by fractional Brownian motion (fBm) has been extensively investigated in the literature. In contrast, statistical estimation for multi-dimensional fractional stochastic equation remains relatively underexplored (see \cite{HNZ} and the references therein). To the best of our knowledge, existing studies in this area have concentrated primarily on the complex Vasicek model, which encompasses the complex Ornstein-Uhlenbeck process as a special case (see  \cite{AACZ}, \cite{CHW}, \cite{GTC}, \cite{STY} and the references therein). The analytical techniques developed in these works are, in principal, applicable to a broad class of stochastic processes driven by fractional Brownian motion.

Brownian bridges arise naturally in the characterization of stochastic fluctuations with fixed boundary conditions and have found applications in diverse fields, including polymer chain models in physics and trait evolution models in biology. A real-valued $\alpha$-fractional Brownian bridge is defined as the pathwise solution to the SDE 
\begin{align}\label{real ALPHA BRIDGE}
   \dif X_t=-\alpha\frac{X_t}{T-t}\dif t+\sigma\dif B_t^H,\,\, t\in[0,T)
\end{align} 
with initial condition $X_0=0$, where $\alpha, \sigma$ are positive real numbers and $B_t^H$ is a fBm with Hurst parameter $H\in (0,1)$. When setting up $\sigma=1$, the unique solution to equation \eqref{real ALPHA BRIDGE} is
\begin{align}\label{xt bds}
    X_t=(T-t)^{\alpha}\int_0^t (T-u)^{-\alpha} \dif B^H_u,\quad \quad 0\le t<  T.
\end{align}
The parameter $\alpha$ controls the strength of the attraction toward the endpoint. When $\alpha=1$, it is the standard Brownian bridge. 

It is interesting to investigate the statistical estimation for the parameter $\alpha$. This statistical problem was first studied in \cite{EN}. The least squares estimator (LSE) for the real-valued $\alpha$-fractional Brownian bridge is given by
 \begin{align}\label{LSE}
     \hat{\alpha}_t=-\frac{\int_0^t \frac{{X}_u}{T-u}\dif X_u}{\int_0^t\frac{X_u^2}{(T-u)^2}\dif u},
 \end{align}
 where the stochastic integral in the numerator is a Young integral when $H\in (\frac12,1)$. Under the condition of $\alpha\le \frac12$, the LSE $\hat{\alpha}_t$ is strongly consistent and the term $\hat{\alpha}_t-\alpha$ after being normalized converges in law as $t\to T$ (see \cite{EN}). Similar results have been found for the SDE \eqref{real ALPHA BRIDGE} when $B^H$ is replaced by some other Gaussian processes (see \cite{HSY} and \cite{KL}). Clearly, when $H\in(0,\frac12)$, the stochastic integral $\int_0^t \frac{{X}_u}{T-u}\dif X_u $ cannot be interpreted as a Young integral any more. 
 Moreover, a similar result is extended to the SDE \eqref{real ALPHA BRIDGE} with linear drift
 (see \cite{HShY}).

This paper aims to contribute to this research strand by studying the least squares estimator for the parameter $\alpha$ of a complex $\alpha$-fractional Brownian bridge.
Let $(B_t^1,B_t^2)$ be a two-dimensional fBm with the same Hurst parameter $H\in (0,1)$. Consider the SDE driven by the complex fractional Brownian motion $\zeta_t=\frac{1}{\sqrt{2}}\left(B_t^1+\mi B_t^2\right)$, 
\begin{align}\label{COMPLEX ALPHA BRIDGE}
   \dif Z_t=-\alpha\frac{Z_t}{T-t}\dif t+\sigma\dif \zeta_t,\quad t\in[0,T)
\end{align} 
with initial condition $Z_0=0$, where $\alpha=\lambda-\mi w$ is a complex number with positive real part and $\sigma>0$. The pathwise solution to this SDE is a complex-valued process, referred to as a complex $\alpha$-fractional Brownian bridge $Z_t=X_1(t)+\mi X_2(t)$.

Suppose that only one trajectory for the stochastic process $Z=(Z_u)_{u\in[0,t)}$ can be obtained. We would like to construct a consistent estimator for the unknown complex parameter $\alpha$ and study its asymptotic behavior as $t\to T$. To compare with the work in \cite{EN}
, we write \eqref{COMPLEX ALPHA BRIDGE} in multi-dimensional SDE form as
    \begin{align}\label{2dim eq}
	\left[\begin{array}{c}
	\dif X_1(t)  \\
	\dif X_2(t)
        \end{array}\right]
			=\frac{1}{T-t}\left[\begin{array}{cc}
				-\lambda &-w  \\
				w &-\lambda
			\end{array}\right]
			\left[\begin{array}{c}
				X_1(t)  \\
				X_2(t) 
			\end{array}\right]\dif t
			+\frac{\sigma}{\sqrt{2}}\left[\begin{array}{c}
				\dif B_t^1  \\
				\dif B_t^2 
			\end{array}\right].
		\end{align}

When the parameter $w$ degenerates to zero, the system of equations \eqref{2dim eq} reduces to two independent real $\lambda$-fractional Brownian bridges (see the equation \eqref{real ALPHA BRIDGE} above), which have been studied in \cite{EN}. When the parameter $w$ dose not equal to zero, the tools used in  \cite{EN} can not be applied to analyze the system \eqref{2dim eq}. Instead, we will apply complex Malliavin calculus.

By setting up $\sigma= 1 $ in the model \eqref{COMPLEX ALPHA BRIDGE} without loss of generality, the solution to equation \eqref{COMPLEX ALPHA BRIDGE} can be written explicitly as 
\begin{align}\label{zt bds}
				Z_t=(T-t)^{\alpha}\int_0^t (T-u)^{-\alpha} \dif \zeta_u,\quad t\in [0,T).
			\end{align}
Denote the complex Wiener integral term in \eqref{zt bds} by
\begin{align}\label{omega bds}
\omega_t=\int_0^t (T-u)^{-\alpha} \dif \zeta_u,\quad t\in [0,T).
\end{align} 
Our first result establishes that the process $Z_t$ defines a well-posed complex $\alpha$-fractional Brownian bridge over $t \in [0, T]$. This is stated precisely in the following theorem.

\begin{theorem}\label{fBr existence}
  Assume $H \in (0, 1)$. Suppose $\lambda = \RE(\alpha) \in (0, H)$. Then the limit $\omega_T:=\lim_{t \uparrow T} \omega_t$ exists in $L^2$ and almost surely. Moreover,
    \begin{align}\label{wt2 norm}
	\Enum [|\omega_T|^2]=\frac{\Gamma(1+2H) }{2(H-\lambda)}\RE \Big[\frac{\Gamma(1-\alpha)}{\Gamma(2H-\alpha)} \Big]T^{2(H-\lambda)}.
    \end{align}
Consequently, the Gaussian process $\omega:=\{\omega_t, t \in [0, T]\}$ admits a modification on $[0,T] $ with $(H-\lambda-\epsilon)$-H\"{o}lder continuous paths for all $\epsilon\in (0,H-\lambda)$. In addition, the process $Z=(Z_t)_{t\in [0,T]}$ defines a well-posed complex $\alpha$-fractional Brownian bridge.
\end{theorem}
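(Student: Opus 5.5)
The plan is to treat $\omega_t$ as a complex Wiener integral of the deterministic kernel $f(u)=(T-u)^{-\alpha}\mathbf{1}_{[0,t]}(u)$ with respect to $\zeta$. We first compute the covariance structure $\Enum[\omega_t\overline{\omega_s}]$ and then deduce convergence and regularity from Gaussianity.

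\textbf{Second moments.} Since $\zeta=\frac1{\sqrt2}(B^1+\mi B^2)$ with independent components, $\Enum[\zeta_t\overline{\zeta_s}]=R_H(t,s)$ and $\Enum[\zeta_t\zeta_s]=0$. Hence for $0\le s\le t<T$,
\[
\Enum\big[|\omega_t-\omega_s|^2\big]=\big\|(T-\cdot)^{-\alpha}\mathbf 1_{[s,t]}\big\|^2_{\FH},
\]
where $\FH$ is the complexified reproducing space of fBm. Because $|(T-u)^{-\alpha}|=(T-u)^{-\lambda}$, we avoid the $\FH$-inner product formula, which requires $H>1/2$. Instead we use a representation valid for all $H\in(0,1)$. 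The substitution $x=T-u$ turns $\int_s^t (T-u)^{-\alpha}\dif\zeta_u$ into $-\int_{T-t}^{T-s}x^{-\alpha}\dif\tilde\zeta_x$, where $\tilde\zeta_x=\zeta_{T-x}-\zeta_T$ is again a complex fBm. We then apply the spectral (harmonizable) representation
\[
\|g\|_{\FH}^2=c_H\int_{\Rnum}|\hat g(\xi)|^2|\xi|^{1-2H}\dif\xi .
\]
Alternatively, we integrate by parts and write $\omega_t=(T-t)^{-\alpha}\zeta_t-\alpha\int_0^t(T-u)^{-\alpha-1}\zeta_u\dif u$, which is pathwise valid for every $H$, and compute the second moment from $R_H$ directly. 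Either route gives $\Enum|\omega_t-\omega_s|^2\le C\,|t-s|^{2(H-\lambda)}$ uniformly on $[0,T)$, using homogeneity of degree $2H-2\lambda$ of the kernel near the singular point $T$. This bound needs $\lambda<H$.

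\textbf{Limit and the explicit constant.} The bound shows that $(\omega_t)$ is Cauchy in $L^2$ as $t\uparrow T$, so $\omega_T$ exists in $L^2$. For the formula \eqref{wt2 norm}, we start from the integration-by-parts expression, or equivalently from
\[
\Enum|\omega_T|^2=\alpha\bar\alpha\text{-type double integral of }|x-y|^{2H}x^{-\alpha-1}y^{-\bar\alpha-1}
\]
after writing $R_H(s,t)=\frac12(|s|^{2H}+|t|^{2H}-|t-s|^{2H})$. Scaling $y=rx$ reduces this to a one-dimensional Beta-type integral $\int_0^1 (1-r)^{2H}r^{-\bar\alpha-1}\dif r$, suitably regularized, times $\int_0^T x^{2H-2\lambda-1}\dif x=T^{2(H-\lambda)}/(2(H-\lambda))$. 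The Beta functions $B(1-\alpha,2H+1)$ combine via $\Gamma(1+2H)\Gamma(1-\alpha)/\Gamma(2H-\alpha+\ldots)$ to the stated $\Gamma(1+2H)\RE[\Gamma(1-\alpha)/\Gamma(2H-\alpha)]$. The real part arises from adding the $r<1$ and $r>1$ contributions, which are complex conjugates.

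\textbf{Main obstacle.} We expect the main difficulty to be this exact evaluation uniformly in $H\in(0,1)$, because the boundary terms and the divergent pieces of individual integrals must cancel. Our first approach is to prove the identity for $H>1/2$ via $\alpha_H\iint x^{-\alpha}y^{-\bar\alpha}|x-y|^{2H-2}$ and then extend to all $H$ by analytic continuation in $H$. Both sides are analytic in $H$ on the region $\lambda<\RE H<1$, since the regularized integral is.

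\textbf{Regularity and the bridge.} Gaussianity of $\omega_t-\omega_s$ upgrades the variance bound to $\Enum|\omega_t-\omega_s|^{p}\le C_p|t-s|^{p(H-\lambda)}$ on $[0,T]$. Kolmogorov's criterion then gives a modification that is $(H-\lambda-\epsilon)$-H\"older continuous, and in particular almost sure convergence $\omega_t\to\omega_T$. Finally, $Z_t=(T-t)^\alpha\omega_t$ with $|(T-t)^\alpha|=(T-t)^\lambda\to0$ and $\omega$ bounded on $[0,T]$, so $Z_t\to0$ almost surely. Setting $Z_T=0$ therefore gives a continuous process on $[0,T]$ pinned at both ends, which is the well-posed $\alpha$-fractional Brownian bridge.
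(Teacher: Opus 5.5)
Your argument is correct in outline, but you get the constant in \eqref{wt2 norm} by a different route from the paper.

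\textbf{How the two routes differ.} The paper works uniformly in $H\in(0,1)$. It uses the decomposition of Lemma~\ref{xi decompose} (adapted from \cite{AACZ}), evaluates $J_1(0,T)$ and $J_3(0,T)$ as Beta integrals, shows $J_2(0,T)=0$ by dominated convergence, and recombines. H\"older regularity then follows from the increment bound of Proposition~\ref{dianzeduliang} and Kolmogorov--Centsov.

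You instead do only the easy case $H>\frac12$ directly. There $H(2H-1)\int_{[0,T]^2}x^{-\alpha}y^{-\bar\alpha}|x-y|^{2H-2}\dif x\dif y=2H(2H-1)\RE[B(1-\bar\alpha,2H-1)]\,T^{2(H-\lambda)}/(2(H-\lambda))$, which is \eqref{wt2 norm} because $2H(2H-1)\Gamma(2H-1)=\Gamma(1+2H)$. You then continue analytically in $H$.

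This is sound, but the justification should be the spectral formula, not ``the regularized integral.'' We have $\Enum|\omega_T|^2=c_H\int|\hat g(\xi)|^2|\xi|^{1-2H}\dif\xi$, where $g=x^{-\alpha}\mathbf 1_{(0,T]}$ does not depend on $H$ and $|\hat g(\xi)|\lesssim\min(1,|\xi|^{\lambda-1})$. So the left side is holomorphic on $\lambda<\RE H<1$. On the right, replace $\RE[\cdot]$ by $\frac12[\Gamma(1-\alpha)/\Gamma(2H-\alpha)+\Gamma(1-\bar\alpha)/\Gamma(2H-\bar\alpha)]$ before continuing, since $\RE$ of a holomorphic function is not holomorphic.

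What each buys: your route avoids entirely the cancellations needed for $H<\frac12$. The paper's route gives an explicit all-$H$ formula for the increments, not only for $\Enum|\omega_T|^2$.

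\textbf{Points to tighten.} First, the uniform bound $\Enum|\omega_t-\omega_s|^2\le C|t-s|^{2(H-\lambda)}$ carries both the $L^2$-Cauchy property and the Kolmogorov step. ``Homogeneity'' is only a heuristic for it; the paper likewise just cites this bound. To make it precise, reflect and rescale by $\delta=t-s$. The increment then equals $\delta^{2(H-\lambda)}\|f_c\|_{\FH}^2$, with $f_c:=y^{-\alpha}\mathbf 1_{[c,c+1]}$ and $c=(T-t)/\delta$. The estimate $|\hat f_c(\xi)|\lesssim\min(1,|\xi|^{\lambda-1})$ holds uniformly in $c\ge0$, so $\sup_{c\ge0}\|f_c\|_{\FH}^2\le C\int_{\Rnum}\min(1,|\xi|^{2\lambda-2})|\xi|^{1-2H}\dif\xi<\infty$. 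This is exactly where $\lambda<H$ enters.

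Second, in your integration-by-parts alternative you name the wrong Beta function. The regularized integral is $\int_0^1 r^{-\bar\alpha-1}[1-(1-r)^{2H}]\dif r=-\frac1{\bar\alpha}-B(-\bar\alpha,2H+1)$, not $B(1-\alpha,2H+1)$. Keep $x^{2H}-(x-y)^{2H}$ grouped so the divergent pieces stay together. With this correction the boundary terms cancel, and the total is $T^{2(H-\lambda)}\bigl(-\RE[\bar\alpha B]-\frac{|\alpha|^2}{2(H-\lambda)}\RE B\bigr)$ with $B=B(-\bar\alpha,2H+1)$. This reduces to \eqref{wt2 norm} via $\bar\alpha\Gamma(-\bar\alpha)=-\Gamma(1-\bar\alpha)$ and $2(H-\lambda)+\alpha=2H-\bar\alpha$.
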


Now for the complex $\alpha$-fractional Brownian bridge $Z$, we consider the statistical estimation of the important parameter $\alpha$ when $H\in (\frac12,1)$. We write equation \eqref{COMPLEX ALPHA BRIDGE} intuitively as 
$$\dot{Z}_t+\alpha \frac{Z_t}{T-t}=\dot{\zeta}_t,\,\,\, 0\leq t< T.$$
By minimizing $$F(\alpha): =\int_0^t \left|\dot{Z}_u+\alpha \frac{Z_u}{T-u}\right|^2\dif  u,$$ 
we obtain the equation 
$$\frac{\partial F}{{\partial\bar{\alpha}}}=\int_0^t \frac{\bar{Z}_u}{T-u} \dif Z_u + \alpha \int_0^t \frac{|\bar{Z}_u|^2}{(T-u)^2} \dif u=0.$$
Solving the above equation yields the least squares estimator of the complex parameter $\alpha$:  \begin{align}\label{ALPHA LSE}
			\hat{\alpha}_t=-\frac{\int_0^t \frac{\overline{Z}_u}{T-u}\dif Z_u}{\int_0^t\frac{|Z_u|^2}{(T-u)^2}\dif u}
			=\alpha-\frac{\int_0^t \frac{\overline{Z}_u}{T-u}\dif \zeta_u}{\int_0^t\frac{|Z_u|^2}{(T-u)^2}\dif u}.
		\end{align}
The stochastic integral in the numerator is a complex Young integral since $H\in (\frac12,1)$. 

It is important to study the asymptotic behavior of the estimator $\hat\alpha_t$. We write $\xrightarrow{a.s.}$ and $\xrightarrow{d}$ for convergence almost surely and in distribution as $t \to T$, respectively. The second contribution of this paper is to show that the LSE $\hat{\alpha}_t$ is consistent, which is presented in Theorem~\ref{main result}.
\begin{theorem}\label{main result}
    Let $H\in (\frac12, 1)$. When $\RE(\alpha) \in (0, \frac{1}{2}] $, we have $\hat{\alpha}_t \xrightarrow{a.s.} \alpha$ as $t \to T$. However, the estimator $\hat{\alpha}_t$ is not asymptotically consistent when $\RE(\alpha) \in (\frac{1}{2}, H) $.
\end{theorem}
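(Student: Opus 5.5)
Following \cite{EN}, I would rewrite $\hat\alpha_t-\alpha$ so that the numerator no longer contains a stochastic integral. Since $Z_u=(T-u)^{\alpha}\omega_u$, we have $|Z_u|^2=(T-u)^{2\lambda}|\omega_u|^2$, so the denominator is
\[
D_t=\int_0^t (T-u)^{2\lambda-2}|\omega_u|^2\dif u .
\]
For the numerator, write $\dif Z_u=-\alpha\frac{Z_u}{T-u}\dif u+\dif\zeta_u$ and use the Young chain rule $\dif|Z_u|^2=\overline{Z}_u\dif Z_u+Z_u\dif\overline{Z}_u$, which is valid for $H>\frac12$. With $N_t=\int_0^t\frac{\overline{Z}_u}{T-u}\dif Z_u$, taking real parts gives
\[
\RE N_t=\tfrac12\int_0^t (T-u)^{-1}\dif|Z_u|^2 = \frac{|Z_t|^2}{2(T-t)}-\frac12\int_0^t\frac{|Z_u|^2}{(T-u)^2}\dif u .
\]
Hence $\RE(\hat\alpha_t)=\frac12-\frac{|Z_t|^2}{2(T-t)D_t}$.

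The imaginary part is more delicate. I would write it as $-\IM N_t/D_t$, where $\IM N_t = w D_t+\IM\int_0^t \frac{\overline{Z}_u}{T-u}\dif\zeta_u$. Substituting $\overline{Z}_u=(T-u)^{\bar\alpha}\overline{\omega}_u$ and $\dif\zeta_u=(T-u)^{\alpha}\dif\omega_u$ gives
\[
\int_0^t\frac{\overline Z_u}{T-u}\dif\zeta_u=\int_0^t (T-u)^{2\lambda-1}\,\overline{\omega}_u\dif\omega_u .
\]
Its imaginary part is the Lévy-area-type quantity $\int_0^t (T-u)^{2\lambda-1}\IM(\overline\omega_u\dif\omega_u)$.

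\textbf{Case $\lambda\le\frac12$.} By Theorem~\ref{fBr existence}, $\omega_u\to\omega_T\ne0$ almost surely, and $(T-u)^{2\lambda-2}$ is non-integrable at $T$. So $D_t\to\infty$ and $D_t\sim|\omega_T|^2\frac{(T-t)^{2\lambda-1}}{1-2\lambda}$ for $\lambda<\frac12$, or $D_t\sim|\omega_T|^2\log\frac{T}{T-t}$ for $\lambda=\frac12$. Meanwhile $|Z_t|^2/(T-t)=(T-t)^{2\lambda-1}|\omega_t|^2$. For $\lambda<\frac12$ the ratio then tends to $1-2\lambda$, so $\RE\hat\alpha_t\to\frac12-\frac{1-2\lambda}{2}=\lambda$. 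For $\lambda=\frac12$ the ratio is $O(1/\log)$, which tends to $0$, giving the limit $\frac12=\lambda$. For the imaginary part, I would split $\overline\omega_u=\overline\omega_T+(\overline\omega_u-\overline\omega_T)$. The first piece gives $\overline\omega_T\int_0^t (T-u)^{2\lambda-1}\dif\omega_u$. Integrating by parts, this is $\overline\omega_T\big[(T-t)^{2\lambda-1}(\omega_t-\omega_T)+\dots\big]$, and by the $(H-\lambda-\epsilon)$-Hölder continuity of $\omega$ at $T$ it is $O\big((T-t)^{H-\lambda-\epsilon}(T-t)^{2\lambda-1}\big)+O(1)$. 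This is $o\big((T-t)^{2\lambda-1}\big)$, and in the log case $o(\log)$ as well. The second piece is handled with the Young–Loève estimate on $[u,t]$ using the same Hölder exponent, and is of smaller order for the same reason. Dividing by $D_t$, both pieces vanish almost surely. This yields $\hat\alpha_t\to\alpha$.

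\textbf{Case $\frac12<\lambda<H$.} Now $(T-u)^{2\lambda-2}$ is integrable, so $D_t\to D_T<\infty$ almost surely and $\frac{|Z_t|^2}{T-t}\to0$. Hence $\RE\hat\alpha_t\to\frac12\ne\lambda$ almost surely, and the estimator cannot be consistent. The imaginary part also converges to a nondegenerate random limit, but the real part alone already suffices.

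The main obstacle is the almost-sure control of the imaginary (area) part in the boundary case $\lambda=\frac12$, where the gain is only logarithmic. There I would use the Hölder modulus together with the quantitative Young inequality $\big|\int_s^t f\dif g-f(s)(g_t-g_s)\big|\le C\|f\|_{\beta}\|g\|_{\beta}|t-s|^{2\beta}$, with $2\beta>1$. This requires $H-\lambda>\frac12$ for the $\omega$-integral in the self-integrating form, which can fail. If it does, I would instead write the integral against $\zeta$ directly, using $H>\frac12$ regularity of $\zeta$ and the Hölder regularity of $Z_u/(T-u)$ on $[0,t]$, and estimate through second-moment bounds plus a Borel–Cantelli argument along $t_n=T(1-2^{-n})$.
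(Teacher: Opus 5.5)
Your real-part identity $\RE\hat\alpha_t=\frac12-\frac{(T-t)^{2\lambda-1}|\omega_t|^2}{2D_t}$ is correct (Young chain rule on $[0,t]$, $t<T$). Together with the L'H\^opital asymptotics of $D_t$, it gives $\RE\hat\alpha_t\to\lambda$ for $\lambda\le\frac12$ and $\RE\hat\alpha_t\to\frac12\neq\lambda$ for $\lambda\in(\frac12,H)$. That settles the inconsistency claim more explicitly than the paper, which only identifies the a.s.\ limit of $\hat\alpha_t-\alpha$ in Remark~\ref{rem.3.0001}. Your first piece $\overline{\omega}_T\int_0^t(T-u)^{2\lambda-1}\dif\omega_u$ is also handled correctly.

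The gap is the remainder $S_t=\int_0^t(T-u)^{2\lambda-1}(\overline{\omega}_u-\overline{\omega}_T)\dif\omega_u$, and it is not confined to $\lambda=\frac12$. A Young--Lo\`eve estimate using the global exponent $H-\lambda-\epsilon$ for both integrand and integrator needs $2(H-\lambda)>1$, which fails for every $\lambda\in[H-\frac12,\frac12]$. Even for smaller $\lambda$, the integrand is of size $(T-u)^{\lambda+H-1-\epsilon}$, which is unbounded near $T$ when $\lambda+H<1$. So a single Young--Lo\`eve estimate on an interval reaching $T$ is not available either.

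Your fallback does not close this as written. A Young--Lo\`eve bound for $\int_0^t f\,\dif\zeta$, with $f_u=\overline{Z}_u/(T-u)$, on the whole of $[0,t]$ is too crude: $\|f\|_{\beta,[0,t]}$ is of order $|\omega_T|(T-t)^{\lambda-1-\beta}$ (with $\beta=H-\epsilon$), so the resulting bound is not $o(D_t)$. Second moments of a Young integral are not available without further work. The paper gets them by rewriting $\eta_t$ through Proposition~\ref{fuguanxi 2-3} as the Skorohod integral $I_{1,1}(\psi_t)$ plus the deterministic trace term of Lemma~\ref{cal.result1}, and then using the isometry (Proposition~\ref{prop.3.2}). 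Finally, Borel--Cantelli along $t_n$ controls only the values at the $t_n$. The oscillation between consecutive times needs a maximal bound, which the paper obtains from Proposition~\ref{zengliang kongzhi}, hypercontractivity and Garsia--Rodemich--Rumsey.

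The idea that would rescue your pathwise route is localization on dyadic blocks $I_k=[T-2^{-k}T,\,T-2^{-k-1}T]$. Write $\omega$ on $I_k$ as a Young integral of $(T-v)^{-\alpha}$ against $\zeta$ and integrate by parts; this gives $\|\omega\|_{\beta,I_k}\lesssim K2^{k\lambda}$ with $K=\|\zeta\|_{\beta,[0,T]}$. Summing block increments gives $\sup_{I_k}|\omega-\omega_T|\lesssim K2^{-k(\beta-\lambda)}$. Young--Lo\`eve on each block then yields $\bigl|\int_{I_k}(T-u)^{2\lambda-1}(\overline{\omega}_u-\overline{\omega}_T)\dif\omega_u\bigr|\lesssim K^2 2^{-k(2\beta-1)}$, which is summable since $2H>1$. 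Hence $\sup_{t<T}|S_t|<\infty$ a.s.\ and $S_t/D_t\to0$. This would make your argument a fully pathwise proof needing only H\"older continuity of $\zeta$, with no complex Malliavin calculus. Without this step, or the paper's chaos machinery, the imaginary part is not established.
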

As an application, we will use $\hat{\alpha}_t$ to construct a consistent estimator $\tilde{\alpha}_{n}$ when high frequency data is available. We assume that the $\alpha$-fractional Brownian bridge given by equation \eqref{COMPLEX ALPHA BRIDGE} can be observed at discrete time points $\set{s_{n,k}:k=0,1,\dots, n}$ with $0=s_{n,0}<s_{n,1}<\cdots< s_{n,n}=t$. Set \begin{align}\label{lisan guji}
  \tilde{\alpha}_{n}=-\frac{\sum\limits_{k=1}^n \frac{\overline{Z}_{s_{n,k}}}{T-s_{n,k}}(Z_{s_{n,k}}-Z_{s_{n,k-1}})}{\sum\limits_{k=1}^n\frac{|Z_{s_{n,k}}|^2}{(T-{s_{n,k}})^2} ({s_{n,k}}-s_{n,k-1})}.  
\end{align}
We have the following asymptotic result for the estimator $\tilde{\alpha}_{n}$.
\begin{corollary}
     Let $H\in (\frac12, 1)$. Assume that $h_n:=\max_{1\le k\le n} ({s_{n,k}}-s_{n,k-1})\to 0$ and $s_{n,n}\to T$ as $n\to \infty$. When $\RE(\alpha) \in (0, \frac{1}{2}] $, we have $\tilde{\alpha}_n \xrightarrow{a.s.} \alpha$ as $n \to \infty$. However, the estimator $\tilde{\alpha}_n$ is not asymptotically consistent when $\RE(\alpha) \in (\frac{1}{2}, H) $.
\end{corollary}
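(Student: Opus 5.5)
The plan is to reduce the corollary to Theorem~\ref{main result} by showing that, almost surely, the numerator and denominator of $\tilde{\alpha}_n$ are asymptotically equivalent to those of $\hat{\alpha}_{t_n}$, where $t_n:=s_{n,n}\to T$. Write
\[
N_n=\sum_{k=1}^n \frac{\overline{Z}_{s_{n,k}}}{T-s_{n,k}}(Z_{s_{n,k}}-Z_{s_{n,k-1}}),\qquad D_n=\sum_{k=1}^n\frac{|Z_{s_{n,k}}|^2}{(T-s_{n,k})^2}(s_{n,k}-s_{n,k-1}),
\]
and let $N(t)=\int_0^t \frac{\overline{Z}_u}{T-u}\dif Z_u$ and $D(t)=\int_0^t\frac{|Z_u|^2}{(T-u)^2}\dif u$. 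Then $\hat\alpha_t=-N(t)/D(t)$ and $\tilde\alpha_n=-N_n/D_n$. It therefore suffices to prove that
\[
\frac{N_n-N(t_n)}{D(t_n)}\to 0 \quad\text{and}\quad \frac{D_n}{D(t_n)}\to 1 \qquad \text{a.s.}
\]
Theorem~\ref{main result} then gives $\tilde\alpha_n\to\alpha$ when $\RE\alpha\le \frac12$. When $\RE\alpha\in(\frac12,H)$, the same comparison shows that $\tilde\alpha_n-\hat\alpha_{t_n}\to0$, so $\tilde\alpha_n$ inherits the inconsistency of $\hat\alpha_t$.

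\textbf{Riemann-sum errors on compacts.} Fix $\delta>0$ and split each sum at time $T-\delta$. On $[0,T-\delta]$ the integrands $\frac{|Z_u|^2}{(T-u)^2}$ and $\frac{\overline Z_u}{T-u}$ are bounded and Hölder continuous of any order below $H$; this follows from Theorem~\ref{fBr existence} together with the Hölder continuity of $\zeta$. Hence the Riemann sum converges to the Lebesgue integral. For the Young part, the difference between the Riemann–Stieltjes sum and the Young integral is bounded by $C\,h_n^{2\beta-1}\|\cdot\|_\beta\|Z\|_\beta$ for any $\beta\in(\frac12,H)$, by the Love–Young inequality. So both errors on $[0,T-\delta]$ vanish as $n\to\infty$, pathwise.

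\textbf{Tails near $T$, which I expect to be the main obstacle.} The weights $(T-u)^{-1}$ blow up near $T$, so the pieces over $[T-\delta,t_n]$ must be controlled relative to $D(t_n)$. Two facts are needed here.
\begin{itemize}
\item[(i)] When $\RE\alpha\le\frac12$, $D(t)\to\infty$ a.s. as $t\to T$. This should come out of the proof of Theorem~\ref{main result}: there $D(t)$ behaves like $|\omega_T|^2\int_0^t (T-u)^{2\lambda-2}\dif u$, and this diverges since $\omega_T\ne0$ a.s. The case $\lambda>\frac12$ is handled similarly, using the finite limit.
\item[(ii)] The tail of the discrete sum must be comparable to the tail of the integral.
\end{itemize}
For (ii), write $Z_u=(T-u)^{\alpha}\omega_u$. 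Using the Hölder continuity of $\omega$ on $[0,T]$ from Theorem~\ref{fBr existence}, replace $\omega_{s_{n,k}}$ by $\omega_u$ at the cost of $O(h_n^{H-\lambda-\epsilon})$. Then compare $\sum_k (T-s_{n,k})^{2\lambda-2}(s_{n,k}-s_{n,k-1})$ with $\int (T-u)^{2\lambda-2}\dif u$ by monotonicity: the right endpoint overestimates, and the discrepancy is dominated by the last few cells. The caveat is that if $T-t_n$ is not large compared with $h_n$, the last term could be much larger than the integral. I would first try to show that the discrepancy is $o(D(t_n))$ under the stated assumptions. If this fails, I would note that an implicit mesh condition such as $h_n=o(T-t_n)$ is needed, and treat that regime separately.

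For the Young tail, I would rewrite $\dif Z_u=-\alpha \frac{Z_u}{T-u}\dif u+\dif\zeta_u$ in both the continuous and discrete versions. The drift parts reduce to the $D$-comparison above. The noise parts $\sum_k \frac{\overline Z_{s_{n,k}}}{T-s_{n,k}}(\zeta_{s_{n,k}}-\zeta_{s_{n,k-1}})$ versus $\int \frac{\overline Z_u}{T-u}\dif\zeta_u$ are handled by a weighted Love–Young estimate on dyadic blocks $[T-2^{-j},T-2^{-j-1}]$. On each block the weight is comparable to $2^{j}$ and the error is at most $C2^{j(1-\lambda)}h_n^{2\beta-1}$. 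Summing over the blocks up to the scale of $T-t_n$ gives a bound of order $o(D(t_n))$ a.s., using a Borel–Cantelli/Garsia–Rodemich–Rumsey control of the Hölder constants.
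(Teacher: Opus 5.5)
\textbf{The tail step you flag cannot be completed, because the consistency claim is false as stated.} The paper states this corollary without proof, presenting it as an application of Theorem~\ref{main result}. Your reduction (prove $D_n/D(t_n)\to1$ and $(N_n-N(t_n))/D(t_n)\to0$) is the natural route. But without a condition tying $h_n$ to $T-s_{n,n}$, the conclusion fails, so ``treat that regime separately'' cannot succeed.

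Take $s_{n,k}=kT/n$ for $k<n$ and $s_{n,n}=T-e^{-n}$. Then $h_n=T/n\to0$ and $s_{n,n}\to T$. Write $Z_u=(T-u)^{\alpha}\omega_u$ and $M=\sup_{[0,T]}\abs{\omega}<\infty$, with $\omega_T\neq0$ a.s. by Theorem~\ref{fBr existence}.
\begin{itemize}
\item The last term of $D_n$ equals $e^{n(2-2\lambda)}(T/n-e^{-n})\abs{\omega_{s_{n,n}}}^2$, and the other terms of $D_n$ sum to $O(n)$.
\item The last term of $N_n$ is $O(e^{n(1-\lambda)}n^{-\lambda})$, since $\abs{Z_{s_{n,n-1}}}\le (T/n)^{\lambda}M$. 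The remaining terms of $N_n$ sum to $O(n)$.
\end{itemize}
Hence $\tilde\alpha_n\to0\neq\alpha$ a.s., for every $\lambda\in(0,H)$.

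A bounded ratio $h_n/(T-s_{n,n})$ does not suffice either. Take a uniform grid with $T-s_{n,n}=h_n$, $w=0$ and $\lambda<\frac12$. Cells at scale $h_n$ dominate both sums, and one finds the limit $\frac{\sum_{m\ge1}m^{\lambda-1}[(m+1)^{\lambda}-m^{\lambda}]}{\sum_{m\ge1}m^{2\lambda-2}}$, which is strictly less than $\lambda$ by strict concavity of $x^\lambda$. So a hypothesis such as $h_n/(T-s_{n,n})\to0$ has to be added to the statement. Only that corrected version is provable by your argument. Under it, the comparison also gives the inconsistency for $\RE\alpha\in(\frac12,H)$ via Remark~\ref{rem.3.0001}.

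\textbf{Your Young-tail bound needs sharpening.} Even with $h_n=o(T-t_n)$, the estimate as written is too weak. Summing $C2^{j(1-\lambda)}h_n^{2\beta-1}$ over blocks down to scale $T-t_n$ gives $(T-t_n)^{\lambda-1}h_n^{2\beta-1}$. Its ratio to $D(t_n)\asymp(T-t_n)^{2\lambda-1}$ is $(T-t_n)^{-\lambda}h_n^{2\beta-1}$. This need not vanish when $\lambda>2\beta-1$, which can happen since $2\beta-1<2H-1$.

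Use local regularity instead. On a block at distance $d$ from $T$, integrating by parts in $\omega_u-\omega_v=\int_v^u(T-r)^{-\alpha}\dif\zeta_r$ gives $\norm{\omega}_{\beta}\lesssim d^{-\lambda}\norm{\zeta}_{\beta,[0,T]}$. Hence $u\mapsto(T-u)^{\bar\alpha-1}\bar\omega_u$ has $\beta$-seminorm $\lesssim d^{\lambda-1-\beta}+d^{-1}$ there. The Love--Young error over that block is then $\lesssim(d^{\lambda-\beta}+1)h_n^{2\beta-1}$. Choose $\beta\in(\max(\lambda,\frac12),H)$. Then these errors sum to $o(D(t_n))$ once $h_n=o(T-t_n)$; this also covers $\lambda=\frac12$, where $D(t_n)\asymp\abs{\log(T-t_n)}$. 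Only the global H\"older norm of $\zeta$ enters, so no Garsia--Rodemich--Rumsey or Borel--Cantelli argument is needed.
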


The third contribution of this paper is to find the limiting distribution for the term $\hat{\alpha}_t - \alpha$ after it is appropriately normalized. The results are presented in Theorem~\ref{main result2}. We first introduce the complex Gaussian random variable $\eta$ whose real and imaginary parts are independent and normally distributed with mean zero and variance $\frac12 \sigma^2$. We denote by $\eta \sim CN(0,\sigma^2)$. The ratio of two independent complex Gaussian random variables $Z=\frac{\eta_1}{\eta_2}=Z_I+\mi Z_Q$ with $\eta_1\sim CN(0, \sigma_x^2)$ and $\eta_2\sim CN(0, \sigma_y^2)$, follows Cauchy distribution and has a density function \begin{align}\label{COMPLEX RV RATIO}
    f(z):=f(z_r, z_i)=\frac{\sigma_x^2}{\pi  \sigma_y^2}\left(|z|^2+\frac{\sigma_x^2}{\sigma_y^2} \right)^{-2},
\end{align}  where $z=z_r+\mi z_i\in \Cnum$. We write $Z\sim CR(\sigma_x^2/\sigma_y^2).$ 
However, both $Z_I$ and $ Z_Q$ have the same density function $$\frac12\frac{\sigma_x^2}{\sigma_y^2} \left(x^2+\frac{\sigma_x^2}{\sigma_y^2} \right)^{-\frac32},\quad x\in \Rnum,$$ so they do not follow Cauchy distribution any more.

\begin{theorem}\label{main result2}
  Assume $H\in (\frac12, 1)$. We have the following asymptotic results.
  \begin{itemize}
       \item [(i)] When  $\lambda  \in(0, 1-H)$, as $t \to T$, we have
       $$ (T-t)^{\lambda-H}(\alpha-\hat{\alpha}_t)\xrightarrow{d} (1-2\lambda) \times CR(\sigma_x^2/ \sigma_y^2),$$ 
       where \begin{align} \label{bizhi xy}
     \frac{\sigma_x^2}{\sigma_y^2}&=T^{2\lambda-2H}\frac{H-\lambda}{1-H-\lambda}\RE \left(\frac{\Gamma(2-\alpha-2H)}{\Gamma(1-\alpha)} \right)/\RE\left(\frac{\Gamma(1-\alpha)}{\Gamma(2H-\alpha)}\right).
 \end{align}
\item [(ii)] When $\lambda =1-H$, as $t \to T$, we have
    $$\frac{(T-t)^{1-2H}}{\sqrt{2|\log(T-t)|}}(\alpha -\hat{\alpha}_t) \xrightarrow{d}(2H-1)^2\times CR(\sigma_x^2/\sigma_y^2),$$
    where 
    $$\sigma_x^2/ \sigma_y^2=T^{2 (1-2H)} \RE \left(\frac{\Gamma(2-\alpha-2H)}{\Gamma(1-\alpha)} \right)/\RE\left(\frac{\Gamma(1-\alpha)}{\Gamma(2H-\alpha)}\right).$$
    \item [(iii)] When $\lambda \in (1-H,\frac12)$, as $t \to T$, we have $$\frac{(T-t)^{2\lambda-1}}{1-2\lambda}(\alpha-\hat{\alpha}_t)\xrightarrow{d} \frac{F_{\infty}+ {B(2H-1,\bar{\alpha})} H T^{2H-1}}{|\omega_T|^2},$$ where the $(1,1)$-th Wiener chaos random variable $F_{\infty}$ is defined by Proposition~\ref{zengliang kongzhi}.
    \item [(iv)] When $\lambda =\frac12$, as $t \to T$, we have $$|\log(T-t)|(\alpha-\hat{\alpha}_t)\xrightarrow{d} \frac{F_{\infty}+ {B(2H-1,\bar{\alpha})} H T^{2H-1}}{|\omega_T|^2},$$ where the $(1,1)$-th Wiener chaos random variable $F_{\infty}$ is defined by Proposition~\ref{zengliang kongzhi}.
   \end{itemize}
\end{theorem}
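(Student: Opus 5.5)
We work throughout from the decomposition \eqref{ALPHA LSE}, $\alpha-\hat\alpha_t = N_t/D_t$ with
\[
N_t=\int_0^t \frac{\overline{Z}_u}{T-u}\,\dif \zeta_u,\qquad D_t=\int_0^t\frac{|Z_u|^2}{(T-u)^2}\,\dif u ,
\]
and, using \eqref{zt bds}, write $Z_u=(T-u)^{\alpha}\omega_u$.

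\textbf{Denominator.} Substituting gives
\[
D_t=\int_0^t (T-u)^{2\lambda-2}|\omega_u|^2\,\dif u .
\]
For $\lambda<\frac12$ the weight is integrable near $T$ only after it has blown up, so we use Theorem~\ref{fBr existence}: $\omega_u\to\omega_T$ almost surely with H\"older control. A L'H\^opital/Ces\`aro argument then gives
\[
(T-t)^{1-2\lambda}D_t\to \frac{|\omega_T|^2}{1-2\lambda}\quad\text{a.s.}
\]
At $\lambda=\frac12$ the same argument gives $D_t/|\log(T-t)|\to|\omega_T|^2$ a.s.

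\textbf{Numerator, converted to an It\^o--Skorohod form.} Since $N_t$ is a Young integral, we rewrite it using complex Malliavin calculus as a divergence plus a trace term:
\[
N_t=\delta\bigl(\overline{Z}_u(T-u)^{-1}\mathbf 1_{[0,t]}\bigr)+H(2H-1)\int_0^t\!\!\int_0^u (T-u)^{\bar\alpha-1}(T-v)^{-\bar\alpha}|u-v|^{2H-2}\,\dif v\,\dif u .
\]
The first term is a $(1,1)$-th complex Wiener chaos element $F_t$. The trace term is deterministic, and an explicit Beta-function computation gives its asymptotics as $t\to T$: it behaves like $(T-t)^{2H-1+\dots}$-type growth relative to $(T-t)^{1-2\lambda}$, and contributes the constant $B(2H-1,\bar\alpha)HT^{2H-1}$ after normalization by $(T-t)^{1-2\lambda}$ in cases (iii)–(iv). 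For $F_t$ we compute $\E|F_t|^2$ via the complex isometry; its kernel norm involves $\int_0^t (T-u)^{2\lambda-2}\dots$ together with an $|u-v|^{2H-2}$ double integral. This yields the regimes $(T-t)^{2(H-\lambda)}\cdot(T-t)^{\dots}$: for $\lambda<1-H$, $F_t$ blows up at rate $(T-t)^{1-H-\lambda}\cdot$(stuff); at $\lambda=1-H$ a logarithm appears; for $\lambda>1-H$, $F_t$ converges in $L^2$ to $F_\infty$ (Proposition~\ref{zengliang kongzhi}).

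\textbf{Cases (iii) and (iv).} These follow directly. Multiply $N_t/D_t$ by $(T-t)^{2\lambda-1}/(1-2\lambda)$, respectively by $|\log(T-t)|$. The numerator converges in $L^2$ to $F_\infty+B(2H-1,\bar\alpha)HT^{2H-1}$ and the normalized denominator converges a.s. to $|\omega_T|^2$; Slutsky's lemma then gives the limit.

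\textbf{Cases (i) and (ii).} Here the normalized chaos term $(T-t)^{\lambda+H-1}F_t$ (with an extra $1/\sqrt{|\log(T-t)|}$ in case (ii)) dominates. The plan is to show it converges in law to a complex Gaussian $CN(0,\sigma_x^2)$ that is independent of $\omega_T$. The essential mechanism is that $F_t\approx \overline{\omega_T}\,G_t$ with $G_t=\int_0^t (T-u)^{\bar\alpha-1}\dots\,\dif\zeta_u$ a first-chaos variable whose mass concentrates near $T$, so it becomes asymptotically independent of $\omega_T$ (their covariance tends to $0$). We would use a complex fourth-moment / Peccati--Tudor type theorem: the pair $(\omega_T,\ \text{normalized } G_t)$ is jointly Gaussian, so it suffices to show that the covariances converge. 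The variance limit computed with Gamma-function integrals gives $\sigma_x^2$ and $\sigma_y^2=\E|\omega_T|^2$ from \eqref{wt2 norm}. The ratio then equals $(1-2\lambda)\,\overline{\omega_T}\,\eta/|\omega_T|^2=(1-2\lambda)\,\eta/\omega_T$, which is $CR$ distributed by \eqref{COMPLEX RV RATIO}. The remainder $F_t-\overline{\omega_T}G_t$ is shown to be negligible in $L^2$ after normalization, and the deterministic trace term is of lower order when $\lambda\le 1-H$.

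\textbf{Main obstacle.} The hardest step is the sharp asymptotics of the $|u-v|^{2H-2}$ double integrals: identifying the exact constants $\sigma_x^2$ (the Gamma ratio $\Gamma(2-\alpha-2H)/\Gamma(1-\alpha)$), capturing the logarithmic factor $\sqrt{2|\log(T-t)|}$ and the constant $(2H-1)^2$ at $\lambda=1-H$, and proving that the remainder vanishes. We would handle this by the change of variables $u=T-(T-t)x$ and dominated convergence with careful splitting near the diagonal.
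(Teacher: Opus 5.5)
Your outline follows the paper's architecture. You rewrite the Young integral as $I_{1,1}(\psi_t)$ plus the trace term as in \eqref{etabiaosh}, and get the denominator asymptotics a.s.\ by L'H\^opital. You treat (iii)--(iv) via $I_{1,1}(\psi_t)\to F_\infty$ in $L^2$ (Proposition~\ref{zengliang kongzhi}) and Lemma~\ref{cal.result1}. You treat (i)--(ii) by approximating the chaos term by $\bar\omega\,A_t$ with $A_t=\int_0^t (T-u)^{\bar\alpha-1}\dif\zeta_u$, ending with a ratio of independent complex Gaussians.

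Two slips need fixing:
\begin{itemize}
\item $I_{1,1}(\psi_t)$ grows like $(T-t)^{\lambda+H-1}$, so the normalization in (i) is $(T-t)^{1-H-\lambda}$, not $(T-t)^{\lambda+H-1}$ as you wrote. It comes from $(T-t)^{\lambda-H}(\alpha-\hat\alpha_t)=(T-t)^{1-H-\lambda}N_t/\big((T-t)^{1-2\lambda}D_t\big)$.
\item The trace term needs no normalization. It simply converges to $HB(2H-1,\bar\alpha)T^{2H-1}$, and in (i)--(ii) it is killed by $(T-t)^{1-H-\lambda}$ or $1/\sqrt{2|\log(T-t)|}$.
\end{itemize}

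\textbf{The remainder in (i)--(ii).} You defer this to kernel asymptotics with splitting near the diagonal. The paper instead has an exact identity from the product formula, \eqref{zhj jg}: $A_t\bar\omega_t=I_{1,1}(\psi_t)+I_{1,1}(\phi_t)+C_H\int_{(0,t)^2}(\cdots)$. Here $\E|I_{1,1}(\phi_t)|^2$ stays bounded for $\lambda<H$ (Proposition~\ref{last estimeate}), and the deterministic term is bounded. Because you approximate by $\bar\omega_T A_t$ rather than $\bar\omega_t A_t$, you also need $(T-t)^{1-H-\lambda}(\bar\omega_t-\bar\omega_T)A_t\to 0$ in $L^2$. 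This is immediate from $\E|UY|^2=\E|U|^2\,\E|Y|^2+|\E[U\bar Y]|^2$ for jointly circular Gaussians, together with $\E|\omega_t-\omega_T|^2\to 0$. So no new asymptotics are required.

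\textbf{The Gaussian factor.} The paper asserts an $L^2$ limit $\tilde A_T$ of $(T-t)^{1-H-\lambda}A_t$ via Proposition~\ref{prop Y}. Your route is joint convergence in law of the circular Gaussian pair, from convergence of $\E|(T-t)^{1-H-\lambda}A_t|^2$ and $(T-t)^{1-H-\lambda}\E[A_t\bar\omega_T]\to 0$. That is the argument that actually works. An $L^2$ limit would lie in the first chaos and, being orthogonal to every $\zeta_s$ as Proposition~\ref{prop Y} itself asserts, would vanish. The fourth-moment theorem is superfluous, since everything is Gaussian. The price of having only convergence in law is that Slutsky must be run jointly with $\omega_T$ and the a.s.-convergent denominator, which your formulation does.

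\textbf{The constant in (ii).} Track constants with Proposition~\ref{prop Y} and \eqref{wt2 norm}. The limit comes out as $(2H-1)\,\tilde A/\omega_T$, with $\E|\tilde A|^2/\E|\omega_T|^2=(2H-1)\,\sigma_x^2/\sigma_y^2$; the extra $H-\lambda=2H-1$ comes from \eqref{wt2 norm}. The paper dismisses this case as ``similar,'' so verify the stated prefactor $(2H-1)^2$ rather than aiming for it.
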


As mentioned, the statistical estimation for real-valued fractional Brownian bridge was first considered in \cite{EN}. However, it is not straightforward to directly extend the real-valued framework to the two-dimensional setting. In contrast to the main results in \cite{EN}, when $\lambda\in (0, 1-H]$, the two-dimensional limiting distribution whose density is given by \eqref{COMPLEX RV RATIO} is no longer a Cauchy distribution, as it is in the real-valued case, even when the parameter $w$ degenerates to zero.

The paper is organized as follows. In Section 2, we introduce some elements about the complex isonormal Gaussian process and the theory of complex Malliavin calculus and present some path regularities concerning the complex Wiener integral $\omega$ given as in \eqref{omega bds}. Some results regarding real-valued fractional Brownian bridge will be recalled. In Section 3, we will prove Theorem~\ref{main result} by means of Garsia-Rodemich-Rumsey inequality, different from the method of \cite{EN}.
In Section 4, we will prove Theorem~\ref{main result2}.
Two technical inequalities are provided in Appendix. Throughout the paper, we denote by $C$ a generic positive constant whose values may differ from line to line.

\section{Preliminary}\label{prelim}

The fBm $B^H = \set{B^H_t , t \in[0,T]}$ with Hurst
parameter $H \in (0, 1)$ is a continuous centered Gaussian process, defined on a complete probability
space $(\Omega,\mathcal{F}, P )$, with covariance function given by
$$\E(B^H_t B^H_s)=R_H(t,s)=\frac12 \big(\abs{t}^{2H}+\abs{s}^{2H}-\abs{t-s}^{2H}\big).$$
Let $\mathcal{E}$ denote the space of all real valued step functions on $[0,T]$. The Hilbert space $\mathfrak{H}$ is defined
as the closure of $\mathcal{E}$ endowed with the inner product
\begin{align*}
\innp{\mathbf{1}_{[0,s]},\,\mathbf{1}_{[0,t]}}_{\FH}=\E\big(B^H_s B^H_t \big).
\end{align*} 
Denote the isonormal process on the same probability space  by
$$B^H=\left\{B^H(h)=\int_{[0,T]}h(t)\dif B^H_t, \quad h \in \mathfrak{H}\right\}.$$ It is indexed by the elements in the Hilbert space $\mathfrak{H}$ and satisfies the It\^{o}'s isometry:
\begin{align}\label{G extension defn}
\mathbb{E}(B^H(g)B^H(h)) = \langle g, h \rangle_{\mathfrak{H}}, \quad
\forall g, h \in \mathfrak{H}. 
\end{align} 
If $H\in (\frac12,1)$ or the intersection of the supports of two elements $f,\,g \in \mathfrak{H}$ is of Lebesgue measure zero, we have
 \begin{align} \label{innp fg3-zhicheng0}
\langle f,\,g \rangle_{\FH}=H(2H-1)\int_{[0,T]^2}  f(t)g(s) \abs{t-s}^{2H-2} \dif t  \dif s. \end{align} 
Note that Theorem~\ref{main result} and Theorem~\ref{main result2} assume $H>1/2$. However, Theorem~\ref{fBr existence} assumes $H\in (0,1)$ for which the following proposition in \cite{AACZ} provides a method for computing the inner product of two functions in $\mathfrak{H}$ when $H \in (0, \frac{1}{2})$.
\begin{proposition}\label{st1-thm-original}
 Denote $\mathcal{V}_{[0,T]}$ as the set of bounded variation functions on $[0,T]$. Let $H \in (0, \frac{1}{2})$. For any two functions in the set $\mathcal{V}_{[0,T]}$, their inner product in the Hilbert space $\mathfrak{H}$ can be expressed as
 \begin{align} 
\langle f,\,g \rangle_{\FH}
&=H \int_{[0,T]^2}  f(t)  \abs{t-s}^{2H-1}\sgn(t-s) \dif t \nu_{g}(\dif s),\quad  \forall f,\, g\in \mathcal{V}_{[0,T]}, \label{neiji3}
\end{align} where $\nu_{g}(\dif s):=\dif \nu_{g}(s)$, and $\nu_g$ is the restriction on $\left({[0,T]},\mathcal{B}({[0,T]})\right)$ of the signed \textnormal{Lebesgue-Stieljes} measure $\mu_{g^0}$ on $\left(\Rnum,\mathcal{B}(\Rnum)\right)$ such that
\begin{equation*}
g^0(x)=\left\{
    \begin{array}{ll}
g(x), & \quad \text{if}~x\in [0,T],\\
0, &\quad \text{otherwise}.
 \end{array}
\right.
\end{equation*}
If $ g'(\cdot) $ is interpreted as the distributional derivative of $g(\cdot)$, then the formula \eqref{neiji3} admits the following representation:
 \begin{align} 
\langle f,\,g \rangle_{\FH}
&=H \int_{[0,T]^2}  f(t) g'(s) \abs{t-s}^{2H-1}\sgn(t-s) \dif t  \dif s,\quad  \forall f,\, g\in \mathcal{V}_{[0,T]}. \label{innp fg3-00}
\end{align}  
\end{proposition}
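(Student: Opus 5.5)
We verify the identity \eqref{neiji3} on indicator functions of intervals, extend it to step functions by bilinearity, and then pass to general $f,g\in\mathcal{V}_{[0,T]}$ by an approximation argument in which both sides are shown to be continuous. Throughout, $H<\tfrac12$, so the kernel $k(u)=\abs{u}^{2H-1}\sgn(u)$ is locally integrable ($2H-1>-1$), and the right-hand side of \eqref{neiji3} makes sense whenever $f$ is bounded and $\nu_g$ is a finite signed measure.

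\textbf{Step 1: indicators.} Take $f=\mathbf{1}_{[0,a]}$ and $g=\mathbf{1}_{[0,b]}$ with $0<a,b\le T$. Then $g^0$ jumps up by $1$ at $0$ and down by $1$ at $b$. This includes the case $b=T$, because $g^0$ vanishes outside $[0,T]$. Hence $\nu_g=\delta_0-\delta_b$ on $[0,T]$, and the right-hand side of \eqref{neiji3} equals
\[
H\int_0^a t^{2H-1}\dif t-H\int_0^a \abs{t-b}^{2H-1}\sgn(t-b)\dif t .
\]
The first integral gives $\tfrac12 a^{2H}$. The second, split at $t=b$ when $b<a$, gives $\tfrac12\big(\abs{a-b}^{2H}-b^{2H}\big)$. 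Subtracting, we obtain $\tfrac12\big(a^{2H}+b^{2H}-\abs{a-b}^{2H}\big)=R_H(a,b)=\innp{\mathbf{1}_{[0,a]},\mathbf{1}_{[0,b]}}_{\FH}$. Both sides are bilinear in $(f,g)$, and $g\mapsto\nu_g$ is linear. Therefore \eqref{neiji3} holds for all step functions $f,g\in\mathcal{E}$.

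\textbf{Step 2: a uniform bound.} For bounded measurable $h$ set $K_h(s)=\int_0^T h(t)k(t-s)\dif t$. Choose $p<\infty$ large enough that $k\in L^{p'}([-T,T])$. Hölder's inequality gives $\sup_s\abs{K_h(s)}\le C\norm{h}_{L^p}$, and $K_h$ is continuous in $s$. Consequently the right-hand side of \eqref{neiji3}, call it $\Phi(f,g)$, satisfies
\[
\abs{\Phi(f,g)}\le C\,\norm{f}_{L^p}\,\abs{\nu_g}([0,T]),
\]
where $\abs{\nu_g}([0,T])$ is at most the total variation of $g$ plus $\abs{g(0)}+\abs{g(T)}$.

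\textbf{Step 3: approximation (main obstacle).} The difficulty is that step approximations of a BV function need not converge in total variation, so we cannot simply bound $\abs{\nu_{g-g_n}}$. Instead, for $g\in\mathcal{V}_{[0,T]}$ we pick step functions $g_n$ with the following properties: $g_n\to g$ pointwise off a countable set, $\sup_n\norm{g_n}_\infty<\infty$, the total variations $\abs{\nu_{g_n}}([0,T])$ are uniformly bounded, and $\nu_{g_n}\to\nu_g$ weakly. For instance, sample $g$ on refining partitions that contain its countably many jump points in the limit.

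By Step 1 and Step 2,
\[
\norm{g_n-g_m}_{\FH}^2=\Phi(g_n-g_m,\,g_n-g_m)\le C\norm{g_n-g_m}_{L^p}\sup_n\abs{\nu_{g_n}}([0,T]),
\]
and $\norm{g_n-g_m}_{L^p}\to0$ by dominated convergence. So $(g_n)$ is Cauchy in $\FH$. Its limit is identified with $g$: since $B^H(g_n)$ converges in $L^2$ to the Wiener integral of $g$, the limit is the element of $\FH$ represented by $g$. Taking $f_n$ for $f$ in the same way, we have $\innp{f_n,g_n}_{\FH}\to\innp{f,g}_{\FH}$. On the other side, write
\[
\Phi(f_n,g_n)=\int K_{f_n}\dif\nu_{g_n}=\int K_{f_n-f}\dif\nu_{g_n}+\int K_f\dif\nu_{g_n}.
\]
The first term is at most $C\norm{f_n-f}_{L^p}\sup_n\abs{\nu_{g_n}}([0,T])\to0$. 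The second converges to $\int K_f\dif\nu_g=\Phi(f,g)$ by weak convergence, since $K_f$ is bounded and continuous. This proves \eqref{neiji3}.

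\textbf{Step 4: the second representation.} Formula \eqref{innp fg3-00} is then a rewriting of \eqref{neiji3}. The distributional derivative of $g^0$ on $\Rnum$ is exactly the Lebesgue--Stieltjes measure $\mu_{g^0}$, including the boundary jumps $g(0)\delta_0$ and $-g(T)\delta_T$. Restricted to $[0,T]$, this measure is $\nu_g$, so writing $\nu_g(\dif s)=g'(s)\dif s$ with $g'$ understood distributionally gives \eqref{innp fg3-00}.
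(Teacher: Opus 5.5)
Your proof is correct. The paper gives no proof of this proposition; it imports it from \cite{AACZ}, so the comparison is with the standard derivation rather than with anything in the text.

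That derivation goes through the known representation (due to Jolis) $\innp{f,g}_{\FH}=\int_{[0,T]^2}R_H(t,s)\,\nu_f(\dif t)\,\nu_g(\dif s)$ for $f,g\in\mathcal{V}_{[0,T]}$. Integrating by parts in $t$ with $\partial_t R_H(t,s)=H\big(t^{2H-1}-\abs{t-s}^{2H-1}\sgn(t-s)\big)$ yields \eqref{neiji3} at once. The $t^{2H-1}$ term drops out because $\nu_g([0,T])=0$: extension by zero forces zero total mass.

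You instead verify the identity on indicators and supply the extension from $\mathcal{E}$ to $\mathcal{V}_{[0,T]}$ yourself. Your key device is the bound $\abs{\Phi(f,g)}\le C\norm{f}_{L^p}\abs{\nu_g}([0,T])$. It controls $\FH$-norms of differences of step functions by an $L^p$ norm times a uniformly bounded variation. This sidesteps the fact that step functions do not approximate BV functions in variation.

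In effect you reprove the part of Jolis' theorem that is needed: BV functions lie in $\FH$, and sampled step approximations converge to them there. So your argument is self-contained. It also never uses $H<\frac12$, since $\abs{u}^{2H-1}$ is locally integrable for every $H\in(0,1)$. The Jolis route, by contrast, is two lines once that theorem is granted, and it makes transparent why only the $\abs{t-s}^{2H-1}\sgn(t-s)$ part of $\partial_tR_H$ survives.

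When you write it up, three details need a line each.
\begin{itemize}
\item Spell out the weak convergence. You have $\nu_{g_n}([0,x])\to g(x+)=\nu_g([0,x])$ at every continuity point $x<T$ of $g$, and both sides vanish at $x=T$. Together with the uniform variation bound, this gives weak convergence.
\item Make the identification of the $\FH$-limit with $g$ precise. For example, define the Wiener integral of $g$ pathwise as $-\int_{[0,T]}B^H_s\,\nu_g(\dif s)$. Then $B^H(g_n)=-\int_{[0,T]}B^H_s\,\nu_{g_n}(\dif s)$ converges to it almost surely, and your Cauchy bound shows the limit does not depend on the approximating sequence.
\item Under the usual right-continuous normalization, the endpoint atoms of $\nu_g$ are $g(0+)$ and $-g(T-)$ rather than $g(0)$ and $-g(T)$. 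This changes nothing in the argument.
\end{itemize}
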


\subsection{Complex Wiener-It\^o multiple integrals}
In this subsection, we will introduce some essential definitions for understanding complex Wiener-It\^o integrals. Using the standard complexification method, we can complexify the Hilbert space $\FH$ and still denote it by $\FH$, a complex separable Hilbert space with inner product $\innp{\cdot,\cdot}_{\FH}$, linear in the first argument, and conjugate linear in the second argument.
\begin{definition}
  Let $z = x+ \mi y$ where $x, y\in\Rnum$. Complex Hermite polynomials $J_{m,n}(z)$ are uniquely determined by its generating function:
 \begin{align*}
     \exp\set{\lambda \bar{z}+\bar{\lambda} z -2\abs{\lambda}^2}=\sum_{m=0}^{\infty}\sum_{n=0}^{\infty}\frac{\bar{\lambda}^m\lambda^n}{m!n!}J_{m,n}(z).
 \end{align*}
\end{definition} 

\begin{definition}
    A complex Gaussian isonormal process $\set{Z(h):\,h\in \FH}$ over the complex Hilbert space $\FH$, is a centered and symmetric complex Gaussian family in $L^2(\Omega)$ such that 
\begin{align*}
    \E[Z(h)]=0,\quad \E[Z(g)\overline{Z(h)}]=\innp{g, h}_{\FH}, \quad \forall g, h\in \FH. 
\end{align*}
\end{definition}
\begin{definition}\label{def.complex.ito}
    For each $m, n \ge 0$, let $\mathcal{H}_{m,n}$ indicate the closed linear subspace of $L^2(\Omega)$ generated by the random variables of the type    \begin{align*}
    \set{J_{m,n}(Z(h)):\, h\in \FH, \, \norm{h}_{\FH}=\sqrt{2}}.
\end{align*}
The space $\mathcal{H}_{m,n}$ is called 
the $(m,n)$-th Wiener-It\^o chaos of $Z$.
\end{definition} 

\begin{definition}\label{chjfendy}
    For each $m, n \ge 0$, the linear mapping $$I_{m,n}(h^{\otimes m}\otimes \bar{h}^{\otimes n})=J_{m,n}(Z(h)),\quad h \in \mathfrak{H}$$ defines a complex Wiener-It\^o stochastic integral. The mapping $I_{m,n}$ provides a linear isometry between the space $\mathfrak{H}^{\odot m}\otimes \mathfrak{H}^{\odot n}$ (equipped with the norm $\frac{1}{\sqrt{m!n!}}\|\cdot\|_{\mathfrak{H}^{\otimes (m+n)}}$) and the space $\mathcal{H}_{m,n}$. Here $\mathcal{H}_{0,0} = \mathbb{R}$ and $I_{0,0}(x)=x$ by convention.
\end{definition}

From the above definition, if $f\in \mathfrak{H}^{\odot m}\otimes \mathfrak{H}^{\odot n}$ and $g\in \mathfrak{H}^{\odot n}\otimes \mathfrak{H}^{\odot m}$ satisfies a conjugate symmetry relation
\begin{align}\label{chongjifengongegx}
    g(t_1,\dots, t_n; s_1,\dots, s_m)=\overline{f(s_1,\dots, s_m; t_1,\dots, t_n )},
\end{align}
their complex Wiener-It\^o stochastic integrals must satisfy
\begin{align}\label{duiouguanxi}
    \overline{I_{m,n}(f)}=I_{n,m}(g).
\end{align}
Moreover, the complex  Wiener-It\^o integrals satisfy the isometry property. Namely, for $f\in\mathfrak{H}^{\odot a}\otimes\mathfrak{H}^{\odot b}$ and $g\in\mathfrak{H}^{\odot c}\otimes\mathfrak{H}^{\odot d}$,
\begin{equation}\label{isometry property}
	\E\left[ I_{a,b}(f)\overline{{I}_{c,d}(g)}\right]=\mathbf{1}_{\left\lbrace a=c, \, b=d \right\rbrace }  a!b!\left\langle f, g\right\rangle _{\FH^{\otimes (a+b)}}.
\end{equation}

\begin{proposition}
Complex multiple Wiener-It\^o integrals have all moments that
satisfy the following hypercontractivity inequality
           \begin{equation*}
            [\Enum |I_{p, q} (f)|^r]^{\frac{1}{r}} \le (r-1)^{\frac{p+q}{2}} [\Enum |I_{p, q} (f)|^2]^{\frac{1}{2}},  \qquad r \geq 2, 
            \label{hyper inequality}
            \end{equation*}where  $\abs{\cdot}$ is the absolute value (or modulus) of a complex number.
\end{proposition}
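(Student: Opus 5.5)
The plan is to reduce the statement to the classical hypercontractivity of the real Ornstein--Uhlenbeck semigroup. To do this I will identify each complex chaos $\mathcal{H}_{p,q}$ as a subspace of the complexified real Wiener chaos of order $p+q$.

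\emph{Realification.} Write $\FH_{\Rnum}$ for the real Hilbert space underlying $\FH$. I would realize the complex isonormal process through a real isonormal process $W$ over $\FH_{\Rnum}\oplus\FH_{\Rnum}$. For $h=h_1+\mi h_2$, set
\[
Z(h)=\tfrac{1}{\sqrt2}\big(W(h_1,-h_2)+\mi W(h_2,h_1)\big).
\]
A direct check gives $\E[Z(g)\overline{Z(h)}]=\innp{g,h}_{\FH}$ and $\E[Z(g)Z(h)]=0$. The sign and normalization conventions must be matched with the factor $2|\lambda|^2$ in the generating function, and the factor $\sqrt2$ adjusted if needed.

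\emph{Membership in the real chaos.} Fix $h$ with $\norm{h}_{\FH}=\sqrt2$. The exponent $\lambda\overline{Z(h)}+\bar\lambda Z(h)$ is real and equals $W(k_\lambda)$ for some $k_\lambda$ depending $\Rnum$-linearly on $(\RE\lambda,\IM\lambda)$, and $\tfrac12\norm{k_\lambda}^2=2|\lambda|^2$. The generating function is therefore the real exponential martingale $\exp\{W(k_\lambda)-\tfrac12\norm{k_\lambda}^2\}=\sum_{n}\frac{1}{n!}I^W_n(k_\lambda^{\otimes n})$, where $I^W_n$ denotes the real multiple integral of order $n$ with respect to $W$.

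Each term $I^W_n(k_\lambda^{\otimes n})$ is a homogeneous polynomial of degree $n$ in $(\lambda,\bar\lambda)$ with coefficients in the $n$-th real chaos. Comparing coefficients of $\bar\lambda^m\lambda^n$ then shows that $J_{m,n}(Z(h))$ lies in the complexification of the real chaos of order $m+n$. Since that chaos is closed in $L^2$ and $\mathcal{H}_{p,q}$ is the closed span of such variables, every $F=I_{p,q}(f)$ belongs to the complexified real chaos $\mathcal{C}_{p+q}$.

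\emph{Hypercontractivity.} Let $P_t$ be the Ornstein--Uhlenbeck semigroup of $W$, extended $\Cnum$-linearly. On $\mathcal{C}_{n}$ we have $P_tF=e^{-nt}F$. By Mehler's formula $P_t$ is given by integration against a probability kernel, so $|P_tG|\le P_t|G|$ for complex $G$.

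Nelson's real hypercontractivity states $\norm{P_t g}_{L^r}\le\norm{g}_{L^2}$ for real $g$ when $e^{2t}=r-1$. Applying it to $g=|G|$ gives $\norm{P_tG}_{L^r}\le\norm{G}_{L^2}$ for complex $G$. Taking $G=F\in\mathcal{C}_{p+q}$, we get
\[
\big(\E|F|^r\big)^{1/r}=e^{(p+q)t}\norm{P_tF}_{L^r}\le (r-1)^{\frac{p+q}{2}}\big(\E|F|^2\big)^{1/2},
\]
which is the claimed inequality.

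\emph{Main obstacle.} I expect the hardest step to be the membership claim: getting the realification conventions exactly consistent with the generating-function definition of $J_{m,n}$ and with the isometry normalization of $I_{m,n}$. Once $I_{p,q}(f)\in\mathcal{C}_{p+q}$ is secured, the remaining argument is the standard semigroup computation, and the passage to complex-valued $F$ costs nothing because of the positivity $|P_tG|\le P_t|G|$.
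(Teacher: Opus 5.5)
Your argument is correct. The paper states this proposition without proof, as a known property of complex multiple integrals taken from the complex Malliavin calculus literature, so your proof is a self-contained substitute rather than a variant of an argument in the text.

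The step you flag as the main obstacle is routine. You need not construct a new process at all. On the original space, $W(a,b):=\sqrt2\,\RE Z(a-\mi b)$ is a real isonormal process over $\FH_{\Rnum}\oplus\FH_{\Rnum}$; this uses $\E[Z(g)Z(h)]=0$. It reproduces exactly your formula for $Z(h)$ and generates the same $\sigma$-field as $Z$.

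For membership, take $\norm{h}_{\FH}^2=2$. Then $x=\RE Z(h)$ and $y=\IM Z(h)$ are independent standard normals in the first chaos of $W$. With $\theta_1=2\RE\lambda$ and $\theta_2=2\IM\lambda$, the generating function is $e^{\theta_1x-\theta_1^2/2}e^{\theta_2y-\theta_2^2/2}=\sum_{j,k\ge0}\frac{\theta_1^j\theta_2^k}{j!\,k!}\mathrm{He}_j(x)\mathrm{He}_k(y)$. Since $\theta_1,\theta_2$ are linear in $(\lambda,\bar\lambda)$, $J_{m,n}(Z(h))$ is a combination of the products $\mathrm{He}_j(x)\mathrm{He}_k(y)$ with $j+k=m+n$. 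This confirms your normalization $\tfrac12\norm{k_\lambda}^2=2\abs{\lambda}^2$ and makes the coefficient comparison explicit.

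Your Mehler-positivity step $\abs{P_tG}\le P_t\abs{G}$ transfers Nelson's inequality to complex $F$ with no loss in the constant. An equally short alternative: apply real hypercontractivity separately to $\RE F$ and $\IM F$, both in the real chaos of order $p+q$. Then recombine with Minkowski's inequality in $L^{r/2}$, valid since $r\ge 2$: $\norm{F}_{L^r}^2=\norm{(\RE F)^2+(\IM F)^2}_{L^{r/2}}\le\norm{\RE F}_{L^r}^2+\norm{\IM F}_{L^r}^2\le(r-1)^{p+q}\norm{F}_{L^2}^2$. This gives the same sharp constant.
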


\begin{definition}
When $\FH=L^2(A,\mathcal{B}, \nu)$ with $\nu$ non-atomic, the $(i, j)$ contraction of two  symmetric functions $f\in \mathfrak{H}^{\odot a}\otimes \mathfrak{H}^{\odot b},\, g\in \mathfrak{H}^{\odot c}\otimes \mathfrak{H}^{\odot d} $ is defined as
\begin{align*}
  &\quad f\otimes_{i,j} g (t_1,\dots, t_{a+c-i-j};s_1,\dots, s_{b+d-i-j})\\
  &=\int_{A^{i+j}}\nu^{i+j}(\dif u_1\dots \dif u_i\dif v_1\dots \dif v_j)f(t_1,\dots, t_{a-i}, u_1, \dots , u_i; s_1,\dots, s_{b-j}, v_1,\dots, v_j)\\
  &\times g(t_{a-i+1},\dots, t_{a+c-i-j}, v_1,\dots, v_j; s_{b-j+1},\dots, s_{b+d-i-j}, u_1, \dots , u_i).
\end{align*} 
\end{definition}
The product formula for complex multiple Wiener-It\^o integrals states that for $f \in \mathfrak{H}^{\odot a} \otimes\mathfrak{H}^{\odot b}$ and $ g \in \mathfrak{H}^{\odot c} \otimes \mathfrak{H}^{\odot d}$, where $a, b, c, d \geq0$,
\begin{equation}\label{Product_formula}
	I_{a, b}(f) I_{c, d}(g)=\sum_{i=0}^{a \wedge d} \sum_{j=0}^{b \wedge c}\binom{a}{i} \binom{d}{i}\binom{b}{j}\binom{c}{j} i! j! I_{a+c-i-j, b+d-i-j}\left(f \otimes_{i, j} g\right).
\end{equation}

\subsection{Complex Malliavin calculus}
In this subsection, we will introduce some essential results for understanding the complex Malliavin calculus (please refer to \cite{CL}, \cite{CCL} and the references therein for more details).

Let $x_1,y_1,\ldots, x_m,y_m \in\mathbb{R}$ be $2m$ real variables. Define the complex variables $z_j = x_j + \mi y_j$ for $j=1,\ldots,m$. There exists a one-to-one correspondence between complex functions $g:\mathbb{C}^m\rightarrow \mathbb{C}$ and $\tilde{g}:\mathbb{R}^{2m}\rightarrow \mathbb{C}$ via the identity
  $$g\left(z_{1}, \ldots, z_{m}\right)=g\left(x_1+\mi y_1, \ldots, x_m+\mi y_m\right)=\tilde{g}\left(x_1, y_1, \ldots, x_m, y_m\right).$$
We say that $g\in C_p^{\infty}(\mathbb{C}^m)$, if the associated function $\tilde{g}\in C_p^{\infty}(\mathbb{R}^{2m}) $. That is,  $C_p^{\infty}(\mathbb{C}^m)$ is the set of all infinitely continuously differentiable complex functions of $2m$ real variables such that all Wirtinger partial derivatives
\begin{align}
		\partial_{j} g&=\frac{\partial}{\partial z_{j}} g\left(z_{1}, \ldots, z_{m}\right)=\frac{1}{ 2}\left( \frac{\partial}{\partial x_j}-\mi \frac{\partial}{\partial y_j}\right)\tilde{g}\left(x_{1}, y_1,\ldots, x_{m}, y_m\right),\label{Wirtinger derivatives1}\\
		\bar{\partial}_{j} g&=\frac{\partial}{\partial \bar{z}_{j}} g\left(z_{1}, \ldots, z_{m}\right)=\frac{1}{ 2}\left(\frac{\partial}{\partial x_j}+\mi \frac{\partial}{\partial y_j}\right)\tilde{g}\left(x_{1}, y_1,\ldots, x_{m}, y_m\right),\label{Wirtinger derivatives2}
\end{align}
have polynomial growth. 

The complex Malliavin derivative operators $\D$ and $\bar{\D}$ are defined as follows. Let $\mathcal{S}$ denote the set of all smooth random variables of the form
	\begin{equation}\label{complex smooth r.v.}
		G=g\left(Z\left({h}_{1}\right), \cdots, Z\left({h}_{m}\right)\right),
	\end{equation}
	where $ {h}_1,\dots, {h}_m\in\mathfrak{H}$, $m\geq1$ and $g\in C_p^{\infty}(\mathbb{C}^m)$.
	If $G \in \mathcal{S}$ has the form \eqref{complex smooth r.v.}, then the complex Malliavin derivatives of $G$ are the $\mathfrak{H}$-valued elements in $L^{2}(\Omega; \mathfrak{H} )$ defined by
	\begin{align}\label{definition of complex derivative}
		\D G &=\sum_{j=1}^{m} \partial_{j} g\left(Z\left( {h}_{1}\right), \cdots, Z\left( {h}_{m}\right)\right)  {h}_{j}, \\
		\bar{\D} G &=\sum_{j=1}^{m} \bar{\partial}_{j} g\left(Z\left( {h}_{1}\right), \cdots, Z\left( {h}_{m}\right)\right) \overline{ {h}}_{j}.
	\end{align}
The operators $\D$ and $\bar{\D}$ can be iterated such that $\D^{p} \bar{\D}^{q} G$ is a random variable taking values in $\mathfrak{H} ^{\odot p} \otimes \mathfrak{H} ^{\odot q}$ for any $G \in \mathcal{S}$. For $p+q\geq1$, the operators $\D^{p} \bar{\D}^{q}$ are closable from $L^{r}(\Omega)$ to $L^{r}\left(\Omega, \mathfrak{H}^{\odot p} \otimes \mathfrak{H}^{\odot q}\right)$ for every $r \geq 1$. Let $\mathscr{D}^{p, r} \cap\bar{\mathscr{D}}^{q, r}$ denote the closure of $\mathcal{S}$ with respect to the Sobolev seminorm $\|\cdot\|_{p, q, r}$ given by 
	\begin{equation*}	\|G\|_{p,q,r}^{r}=\sum_{i=0}^{p}\sum_{j=0}^{q}\mathbb{E}\left(\left\|\D^i\bar{\D}^jG\right\|^{r}_{\mathfrak{H}_{\mathbb{C}}^{\otimes (i+j)}}\right).
	\end{equation*}
 For $f\in\mathfrak{H}^{\odot p} \otimes \mathfrak{H}^{\odot q}$, we have $I_{p,q}(f)\in \bigcap_{p=0}^{\infty}\bigcap_{q=0}^{\infty}\bigcap_{r=1}^{\infty}\left( \mathscr{D}^{p, r} \cap\bar{\mathscr{D}}^{q, r}\right) $ and for any $k\geq0$,
 \begin{equation}
 	\D^k I_{p,q}(f)  =
 	\begin{cases}
 		\frac{p!}{(p-k)!} I_{p-k,q}(f),  & {\rm if} \ {k\leq p,}\\
 		0,  & {\rm if} \ k>p,
 	\end{cases}
 \end{equation} 
and 
\begin{equation}\label{bard bds}
	\bar{\D}^k I_{p,q}(f)  =
	\begin{cases}
		\frac{q!}{(q-k)!}I_{p,q-k}(f),  & {\rm if} \ {k\leq q,}\\
		0,  & {\rm if} \ k>q.
	\end{cases}
\end{equation} 

The divergence operators $\delta$ and $\bar{\delta} $ are defined as the adjoint of the Malliavin derivative operators $\D$ and $\bar{\D}$, respectively. The domain $\mathrm{Dom}( \delta) $ is the subset of $L^2(\Omega,\FH)$ consisting of those elements $u$ for which there exists a constant $c>0$ such that, for all $ F\in \mathcal{S}$,
   \begin{align*}
     \big|E[\innp{D F,u}_{\mathfrak{H}}]\big|\le c\norm{F}_{L^2}. 
   \end{align*}
The domain $\mathrm{Dom}(\bar{\delta})$ is defined analogously by replacing $D$ with $\bar D$.
If $u\in\mathrm{Dom}(\delta)$, then $ \delta (u)$ (resp. $\bar{\delta}(u)$) is an element in $L^2(\Omega)$ uniquely determined by the following duality formula,
   \begin{align}\label{dul form}
      E[\delta (u) \bar{F}]=E[\innp{u,\, D F}_{\mathfrak{H}}],
   \end{align}
for all $ F\in \mathcal{S}$ and an analogous identity holds for $\bar{\delta}$, i.e., $E[\bar{\delta} ( u) \bar{F}]=E[\innp{u,\,\bar{D} F}_{\mathfrak{H}}]$.

Similarly, for any $p,\,q\in \N$, we can define the divergence operator $\delta^p\bar{\delta}^q$. Then we have $$I_{p, q}(f)=\delta^p\bar{\delta}^q (f),\,\,\,\,\forall f \in\mathfrak{H}^{\odot p}\otimes\mathfrak{H}^{\odot q}.$$

\begin{proposition}\label{sandu-1} 
 If $F\in \mathscr{D}^{1, 2}$ and $u\in\mathrm{Dom}( \delta )$, or $F\in \bar{\mathscr{D}}^{1, 2}$ and $u\in\mathrm{Dom}( \bar{\delta} )$, then we have
   \begin{align*}
       \delta(\bar{F}u)&=\bar{F}\delta(u)-\innp{u,  DF}_{\FH}, \ {\rm and}\\
       \bar{\delta}({F}u)&={F}\bar{\delta}(u)-\innp{u,  \bar{D}\bar{F}}_{\FH}.
   \end{align*}
 \end{proposition}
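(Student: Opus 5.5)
The plan is to prove the first identity by the duality formula \eqref{dul form}, testing against smooth random variables. The second identity then follows by the same argument with $\D$ and $\delta$ replaced by $\bar{\D}$ and $\bar{\delta}$, and $F$ replaced by $\bar F$.

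First I need a product (Leibniz) rule for the complex Malliavin derivative. For $F,G\in\mathcal{S}$ written with a common family $h_1,\dots,h_m$, the Wirtinger derivative satisfies $\partial_j(fg)=f\,\partial_j g+g\,\partial_j f$. Hence definition \eqref{definition of complex derivative} gives
\[
\D(FG)=F\,\D G+G\,\D F .
\]
By closability this extends to $F\in\mathscr{D}^{1,2}$ and $G\in\mathcal{S}$, provided the products are integrable. Similarly $\bar{\D}(\bar F G)=\bar F\,\bar{\D}G+G\,\bar{\D}\bar F$.

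Next, take $G\in\mathcal{S}$ and first assume $F\in\mathcal{S}$ is bounded with bounded derivatives. The inner product is linear in the first argument and conjugate linear in the second, so
\[
\E\big[\innp{\bar F u,\D G}_{\FH}\big]=\E\big[\bar F\innp{u,\D G}_{\FH}\big]=\E\big[\innp{u,F\,\D G}_{\FH}\big].
\]
The Leibniz rule gives $F\,\D G=\D(FG)-G\,\D F$. Applying \eqref{dul form} to $FG\in\mathcal{S}$ then yields
\[
\E\big[\innp{\bar F u,\D G}_{\FH}\big]=\E\big[\delta(u)\,\overline{FG}\big]-\E\big[\bar G\,\innp{u,\D F}_{\FH}\big]=\E\Big[\big(\bar F\delta(u)-\innp{u,\D F}_{\FH}\big)\bar G\Big].
\]
If the right-hand random variable $\bar F\delta(u)-\innp{u,\D F}_{\FH}$ lies in $L^2(\Omega)$, this shows two things. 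First, $\bar F u\in\mathrm{Dom}(\delta)$, since the domain bound holds by Cauchy–Schwarz. Second, $\delta(\bar F u)$ equals that random variable, because elements of $L^2(\Omega)$ are determined by their pairings with $\mathcal{S}$.

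For the second identity, the analogous computation is
\[
\innp{Fu,\bar{\D}G}_{\FH}=\innp{u,\bar F\,\bar{\D}G}_{\FH},
\qquad
\bar F\,\bar{\D}G=\bar{\D}(\bar F G)-G\,\bar{\D}\bar F .
\]
Applying the $\bar\delta$-duality to $\bar F G$ gives
\[
\E\big[\innp{Fu,\bar{\D}G}_{\FH}\big]=\E\Big[\big(F\bar\delta(u)-\innp{u,\bar{\D}\bar F}_{\FH}\big)\bar G\Big].
\]

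The main obstacle is the final extension from smooth bounded $F$ to general $F\in\mathscr{D}^{1,2}$, together with the integrability of the products. As in the real case, the statement is understood under the implicit assumption that $\bar F u\in L^2(\Omega;\FH)$ and that the right-hand side is square integrable. I would take $F_n\in\mathcal{S}$ with $F_n\to F$ in $\mathscr{D}^{1,2}$, truncating with smooth cutoffs if needed. Passing to the limit in the tested identity $\E[\innp{\bar F_n u,\D G}]=\E[(\bar F_n\delta(u)-\innp{u,\D F_n})\bar G]$ only requires convergence in $L^1$, since $G$ and $\D G$ have all moments. Closedness of $\delta$ then identifies the limit.
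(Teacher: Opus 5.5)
Your proof is the same duality-plus-Leibniz argument the paper defers to, namely the real-valued proof in Nualart's book, and your conjugation bookkeeping in both identities is correct. You are also right that the statement only makes sense once the hypotheses $\bar F u\in L^2(\Omega;\FH)$ and square integrability of the right-hand side are added.

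The one step that fails as you justify it is the limit $F_n\to F$. Convergence in $\mathscr{D}^{1,2}$ only gives $\bar F_n u\to \bar F u$, $\bar F_n\delta(u)\to\bar F\delta(u)$ and $\innp{u,\D F_n}_{\FH}\to\innp{u,\D F}_{\FH}$ in $L^1$. Pairing these with $\D G$ and $\bar G$ that merely have all moments does not pass to the limit; the limiting products such as $\bar F\delta(u)\bar G$ need not even be integrable. For the same reason closedness of $\delta$ is not available, since you do not know that $\bar F_n u\to\bar F u$ in $L^2(\Omega;\FH)$.

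The repair is routine:
\begin{enumerate}
\item Test first only against $G=g(Z(h_1),\dots,Z(h_m))$ with $g$ and its derivatives bounded. Then $G$ and $\norm{\D G}_{\FH}$ are bounded, and $L^1$ convergence suffices.
\item Extend to all $G\in\mathcal{S}$ by a smooth cutoff approximation in $\mathscr{D}^{1,2}$. This is exactly where the added integrability hypotheses are used.
\end{enumerate}
Alternatively, as in the real case, you can avoid approximating $F$ altogether: apply the duality formula, extended by closability, directly to $FG\in\mathscr{D}^{1,2}$ for such bounded $G$.
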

Denote by $\zeta_t$ the complex fBm with the Hurst index $H\in (\frac12, 1)$ throughout the remainder of this subsection. Proposition~\ref{sandu-1} implies two propositions concerning stochastic integrals with respect to $\zeta_t$.
 \begin{proposition}
  For a function $F:\Cnum \to \Cnum$ in the set $ C_p^{2}(\mathbb{C})$, the following chain rule holds:
     \begin{equation*}
         F(\zeta_t)=F(0)+\int_0^t \partial F(\zeta_s)\delta \zeta_s +\int_0^t \bar{\partial} F(\zeta_s) \bar{\delta}\zeta_s+ 2H \int_0^t \partial \bar{\partial} F(\zeta_s) s^{2H-1}\dif s, 
     \end{equation*}where $\partial, \,  \bar{\partial}$ are the Wirtinger derivatives and the stochastic integrals with respect to the fBm $\zeta$ are the Skorohod divergence integrals.
 \end{proposition}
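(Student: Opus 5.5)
We prove the chain rule (an Itô formula for the complex fBm with $H>\frac12$) by reducing it to the real two-dimensional Itô formula for divergence integrals. We then translate the real divergences into $\delta,\bar\delta$ using the Wirtinger calculus.

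Write $\zeta_t=\frac{1}{\sqrt2}(B^1_t+\mi B^2_t)$ and $F(x+\mi y)=\tilde F(x,y)$. The process $(X,Y)=(B^1_t,B^2_t)/\sqrt2$ is a pair of independent fBm's with variance $\frac12 t^{2H}$ each. We invoke the known Itô formula for Skorohod integrals of functionals of a Gaussian process with covariance $R(s,s)$ (e.g. Alòs–Mazet–Nualart / Nualart's book, valid for $H>\frac12$ and $C^2_p$ functions), applied to each coordinate. This gives
\[
\tilde F(X_t,Y_t)=\tilde F(0)+\int_0^t \partial_x\tilde F\,\delta X_s+\int_0^t\partial_y\tilde F\,\delta Y_s+\frac12\int_0^t(\partial_{xx}+\partial_{yy})\tilde F\,\dif\Big(\tfrac12 s^{2H}\Big).
\]
Since $\dif(\frac12 s^{2H})=H s^{2H-1}\dif s$ and $\partial_{xx}+\partial_{yy}=4\partial\bar\partial$, the last term equals $2H\int_0^t\partial\bar\partial F(\zeta_s)s^{2H-1}\dif s$, which is the stated correction term.

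Next we rewrite the two real divergence integrals in complex form. We have $\partial_x=\partial+\bar\partial$ and $\partial_y=\mi(\partial-\bar\partial)$. For the complex isonormal process $Z(h)$ built from $\zeta$, the operators satisfy $\D=\frac12(D_x-\mi D_y)$ and $\bar\D=\frac12(D_x+\mi D_y)$, with $D_x,D_y$ the Malliavin derivatives with respect to $X,Y$. By duality \eqref{dul form}, their adjoints satisfy $\delta(u)=\delta_X(u)+\mi\,\delta_Y(u)$ and $\bar\delta(u)=\delta_X(u)-\mi\,\delta_Y(u)$, up to the normalization fixed by $\E|\zeta_t|^2=t^{2H}$. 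We verify this on elementary integrands $u=G\mathbf 1_{[0,s]}$ with $G\in\mathcal S$, using Proposition~\ref{sandu-1} and the identification $\delta(\mathbf 1_{[0,s]})=\zeta_s$, $\bar\delta(\mathbf 1_{[0,s]})=\bar\zeta_s$. Then
\[
\partial_x\tilde F\,\delta X+\partial_y\tilde F\,\delta Y=\partial F\,(\delta X+\mi\,\delta Y)+\bar\partial F\,(\delta X-\mi\,\delta Y),
\]
which gives exactly $\int\partial F\,\delta\zeta+\int\bar\partial F\,\bar\delta\zeta$.

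An alternative self-contained route is to approximate by Riemann sums. Take a partition of $[0,t]$ and Taylor expand $F(\zeta_{t_{k+1}})-F(\zeta_{t_k})$ in Wirtinger form. Then write $\partial F(\zeta_{t_k})(\zeta_{t_{k+1}}-\zeta_{t_k})=\delta(\partial F(\zeta_{t_k})\mathbf 1_{(t_k,t_{k+1}]})+\innp{\mathbf 1_{(t_k,t_{k+1}]},\D\,\overline{\partial F(\zeta_{t_k})}}$ via Proposition~\ref{sandu-1}. The trace terms produce $\bar\partial\partial F(\zeta_{t_k})\innp{\mathbf 1_{(t_k,t_{k+1}]},\mathbf 1_{[0,t_k]}}_\FH$. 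Their sum, together with the second-order Taylor terms, converges to $2H\int\partial\bar\partial F\,s^{2H-1}\dif s$, because $\frac{\dif}{\dif s}R_H(s,s)=2Hs^{2H-1}$.

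The main obstacle is the convergence in $L^2(\Omega)$ of the divergence Riemann sums to $\int\partial F\,\delta\zeta$, i.e.\ that $\partial F(\zeta)\mathbf 1_{[0,t]}$ belongs to $\mathrm{Dom}(\delta)$ and is the limit of its step approximations. For this we will use the standard sufficient condition that the integrand lies in $\mathscr D^{1,2}(\FH)$, with the $|\FH|$-norm controlled through \eqref{innp fg3-zhicheng0} for $H>\frac12$. The polynomial growth of $F$ and its derivatives, together with Gaussian moment bounds, gives the uniform integrability needed to pass to the limit.
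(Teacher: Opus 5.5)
Your proposal is correct, but your main argument takes a different route from the paper. The paper gives no self-contained proof. It says the chain rule follows from the integration-by-parts rule of Proposition~\ref{sandu-1} as in the real case (Nualart), i.e.\ one transcribes the real argument into the complex Malliavin framework. Your Riemann-sum sketch is that transcription: for $H>\frac12$ the second-order Taylor terms vanish, and the whole correction comes from the two trace terms, one from $\delta$ and one from $\bar\delta$.

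Your primary route instead reduces to the real two-dimensional Skorohod It\^o formula for the independent pair $(B^1,B^2)/\sqrt2$. It then transfers the result through $\delta=\delta_X+\mi\,\delta_Y$ and $\bar\delta=\delta_X-\mi\,\delta_Y$. The constants check out: $\frac12\cdot 4\,\partial\bar\partial F\cdot Hs^{2H-1}$ is the stated correction. The divergence identities are exact, so the ``up to normalization'' hedge is unnecessary. With $D_xX_s=\mathbf 1_{[0,s]}$ and $\delta_X$ the adjoint in the reproducing kernel space of $X$ (inner product $\frac12\innp{\cdot,\cdot}_\FH$), duality \eqref{dul form} gives $\E\innp{u,\D F}_\FH=\E[(\delta_X(u)+\mi\,\delta_Y(u))\bar F]$.

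What each approach buys:
\begin{itemize}
\item Your reduction imports all the analysis (domain membership, passage to the limit) from the real theorem. The only genuinely complex input is this adjoint computation.
\item The paper's route stays inside the complex calculus used later (Propositions~\ref{sandu-1}--\ref{fuguanxi 2-3}). However, it must redo the limit argument. Unlike the real case, it must also control the $\delta$- and $\bar\delta$-Riemann sums separately, because the Taylor expansion only forces their sum to converge. That is exactly why your $\mathscr D^{1,2}(\FH)$ bound is needed there.
\end{itemize}

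Two small points on the reduction:
\begin{itemize}
\item You need the multidimensional real formula, not the one-dimensional one ``applied to each coordinate''. The mixed term in $\partial_x\partial_y\tilde F$ drops because $\E[X_sY_s]=0$.
\item Splitting $\delta_X(\partial_x\tilde F\,\mathbf 1_{[0,t]})+\delta_Y(\partial_y\tilde F\,\mathbf 1_{[0,t]})$ into $\delta(\partial F\,\mathbf 1_{[0,t]})+\bar\delta(\bar\partial F\,\mathbf 1_{[0,t]})$ needs more than the real formula gives. Each of $\partial_x\tilde F$ and $\partial_y\tilde F$ (evaluated along the path) must lie in both $\mathrm{Dom}(\delta_X)$ and $\mathrm{Dom}(\delta_Y)$, not only the diagonal memberships the real formula asserts. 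The same $\mathscr D^{1,2}(\FH)$ criterion supplies this.
\end{itemize}
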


\noindent We use $\int_0^t u_s \dif \zeta_s$ and $\int_0^t u_s \delta \zeta_s$ to denote the Young integral and Skorohod integral of an element $u$ with respect to the fBm $\zeta_s$, respectively. Similar notations are applied for the integrals with respect to fBm $\bar\zeta_s$. The next proposition presents the relationship between the Young and Skorohod integrals.

\begin{proposition}\label{fuguanxi 2-3}
 Let $u=(u_t)_{t\in [0,T]}$ be a process with paths in $C^{\gamma}[0,T]$ where $\gamma>1-H$. If the process $u_t$ satisfies the conditions
 \begin{itemize}
     \item [(1)] $u_t\in \bar{\mathscr{D}} $ for all $t\in [0,T]$, and
     \item [(2)] $\int_0^T \int_0^T \abs{\bar{\D}_s u_x} |x-s|^{2H-2} \dif s \dif x<\infty$ a.s..
 \end{itemize}
 Then for all $t\in [0,T]$,
    \begin{align*}
       \int_0^t u_s \dif \zeta_s &=  \int_0^t u_s \delta \zeta_s +H(2H-1)\int_0^t \int_0^t \bar{\D}_s u_x |x-s|^{2H-2} \dif s \dif x.
    \end{align*}
If the process $u_t$ satisfies the conditions
 \begin{itemize}
     \item [(3)] $u_t\in \mathscr{D}$ for all $t\in [0,T]$, and
     \item [(4)] $\int_0^T \int_0^T \abs{{\D}_s u_x} |x-s|^{2H-2} \dif s \dif x<\infty$ a.s..
 \end{itemize}
Then for all $t\in [0,T]$,
    \begin{align*}
       \int_0^t u_s \dif \bar{\zeta}_s &=  \int_0^t u_s \bar{\delta} \zeta_s +H(2H-1)\int_0^t \int_0^t {\D}_s u_x |x-s|^{2H-2} \dif s \dif x.    
    \end{align*}
Moreover, the set of conditions (1)-(2) is equivalent to (3)-(4).
\end{proposition}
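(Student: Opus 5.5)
The plan is to reduce everything to Riemann sums along partitions of $[0,t]$ and to apply Proposition~\ref{sandu-1} term by term. Fix a partition $\pi=\{0=t_0<t_1<\dots<t_n=t\}$ with mesh $|\pi|\to 0$ and set
\[
S_\pi=\sum_{k=0}^{n-1}u_{t_k}\,(\zeta_{t_{k+1}}-\zeta_{t_k}),\qquad u^\pi_s=\sum_{k=0}^{n-1}u_{t_k}\mathbf{1}_{(t_k,t_{k+1}]}(s).
\]
Because $u\in C^\gamma$ with $\gamma+H>1$ (up to an $\epsilon$, using that $\zeta$ has $(H-\epsilon)$-H\"older paths), the Young--Lo\`eve estimate gives $S_\pi\to\int_0^t u_s\,\dif\zeta_s$ almost surely. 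Writing $\zeta_{t_{k+1}}-\zeta_{t_k}=Z(\mathbf{1}_{[t_k,t_{k+1}]})=\delta(\mathbf{1}_{[t_k,t_{k+1}]})$ for the complex isonormal process, I will apply Proposition~\ref{sandu-1} with $\bar F=u_{t_k}$. This yields
\[
u_{t_k}\,\delta(\mathbf{1}_{[t_k,t_{k+1}]})=\delta\big(u_{t_k}\mathbf{1}_{[t_k,t_{k+1}]}\big)+\innp{\mathbf{1}_{[t_k,t_{k+1}]},\,D\bar u_{t_k}}_{\FH}.
\]
Using $D\bar G=\overline{\bar{\D} G}$ together with the conjugate linearity of the inner product and formula \eqref{innp fg3-zhicheng0}, the correction term becomes $H(2H-1)\int_0^t\int_{t_k}^{t_{k+1}}\bar{\D}_s u_{t_k}\,|x-s|^{2H-2}\,\dif x\,\dif s$. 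Summing over $k$ gives
\[
S_\pi=\delta(u^\pi)+H(2H-1)\int_0^t\int_0^t \bar{\D}_s u^\pi_x\,|x-s|^{2H-2}\,\dif s\,\dif x .
\]

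Next I pass to the limit in the trace term. Assuming $s\mapsto\bar{\D}_s u_x$ depends regularly enough on $x$, I have $\bar{\D}_s u^\pi_x\to\bar{\D}_s u_x$ for a.e.\ $(s,x)$. Condition (2) supplies the integrable majorant $|x-s|^{2H-2}$ times the absolute derivative, so dominated convergence gives almost sure convergence of the trace term to the double integral in the statement. Then $\delta(u^\pi)=S_\pi-(\text{trace})$ converges almost surely, and hence in probability. Meanwhile $u^\pi\to u\mathbf{1}_{[0,t]}$ in $L^2(\Omega;\FH)$, or in probability, by the H\"older continuity of $u$. The closedness of the divergence $\delta$, in its extended sense where it is closed with respect to convergence in probability, then identifies the limit as $\int_0^t u_s\,\delta\zeta_s$. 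The second identity, the one involving $\bar\zeta$, follows in exactly the same way by using the second formula of Proposition~\ref{sandu-1}, or simply by taking complex conjugates of the first identity applied to $\bar u$.

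For the equivalence of the conditions (1)--(2) and (3)--(4), I will express $\D$ and $\bar{\D}$ through the real Malliavin derivatives $D^{(1)},D^{(2)}$ associated with $B^1,B^2$, namely $\D=\frac{1}{\sqrt2}(D^{(1)}-\mi D^{(2)})$ and $\bar{\D}=\frac{1}{\sqrt2}(D^{(1)}+\mi D^{(2)})$. Membership in either complex Sobolev space is then equivalent to the real and imaginary parts of $u_t$ lying in the real $\mathscr{D}^{1,2}$. The integrability conditions (2) and (4) can then each be compared with $\int\!\!\int(|D^{(1)}_s u_x|+|D^{(2)}_s u_x|)|x-s|^{2H-2}\,\dif s\,\dif x<\infty$. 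One direction of this comparison is immediate. The other requires controlling $|D^{(j)}u|$ by $|\D u|+|\bar{\D}u|$, which is the delicate point. I expect the main obstacle to be twofold: this comparison, and justifying the convergence of $\delta(u^\pi)$, i.e.\ verifying that $u^\pi$ lies in $\mathrm{Dom}(\delta)$ and that closability applies under merely almost sure hypotheses. I would handle the latter by localization, stopping on the size of the H\"older norm of $u$ and of the integral in (2).
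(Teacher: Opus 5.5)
\textbf{There is a genuine gap, in the limit $|\pi|\to0$.} Your first step is sound. Applying Proposition~\ref{sandu-1} with $\bar F=u_{t_k}$ and using $\D\bar G=\overline{\bar{\D}G}$ does give $S_\pi=\delta(u^\pi)+\text{trace}_\pi$. Note, though, that $\langle \mathbf{1}_{[t_k,t_{k+1}]},\D\bar u_{t_k}\rangle_{\FH}$ integrates $s$ over all of $[0,T]$; restricting it to $[0,t]$ needs $\bar{\D}_s u_x=0$ for $s>t$, which holds in the paper's application but not in general.

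The problem is the passage to the limit. In $\text{trace}_\pi$ the integrand is $\bar{\D}_s u_{t_k}\,|x-s|^{2H-2}$ for $x\in(t_k,t_{k+1}]$, so the derivative is frozen at the left endpoint. Condition (2) gives no majorant for this: it controls $|\bar{\D}_s u_x|$ at the point $x$ itself, not at $t_k$, and not anything like $\sup_y|\bar{\D}_s u_y|$. So dominated convergence is unavailable. Even the pointwise convergence $\bar{\D}_s u_{t_k}\to\bar{\D}_s u_x$ is an extra hypothesis (continuity of $x\mapsto\bar{\D}u_x$) that you had to add by hand.

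The divergence term has the same defect. Almost sure convergence of $\delta(u^\pi)$ does not by itself show $u\mathbf{1}_{[0,t]}\in\mathrm{Dom}(\delta)$, nor does it identify the limit. Closedness of $\delta$ is an $L^2$ statement. To use it, even in its weak form, you need something like $\sup_\pi\E|\delta(u^\pi)|^2<\infty$, i.e.\ control of $\bar{\D}u^\pi$ uniformly in $\pi$, which again requires regularity of $x\mapsto\bar{\D}u_x$. Localizing by cutting off the H\"older norm of $u$ or the integral in (2) would need those random variables to be Malliavin differentiable, which nothing guarantees.

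\textbf{The missing idea} is the one in the real-valued argument of \cite{Nua} that the paper defers to: replace Riemann sums by the symmetric averages $\frac{1}{2\epsilon}\int_0^t u_s(\zeta_{s+\epsilon}-\zeta_{s-\epsilon})\,\dif s$. The same product rule then yields two pieces.
\begin{itemize}
\item A divergence part $\delta(u^\epsilon)$ with $u^\epsilon_r=\frac1{2\epsilon}\int_{r-\epsilon}^{r+\epsilon}u_s\,\dif s$. This converges to $u$ in the relevant Sobolev-type norm simply because it is a time average. That requires the hypothesis $u\in\mathbb{D}^{1,2}(|\FH|)$ of the real result, in its $\bar{\D}$-version. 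You need it anyway to have $u\mathbf{1}_{[0,t]}\in\mathrm{Dom}(\delta)$, and the loosely stated condition (1) does not supply it.
\item A trace part $\frac{H(2H-1)}{2\epsilon}\int\!\!\int\bar{\D}_r u_s\big(\int_{s-\epsilon}^{s+\epsilon}|\sigma-r|^{2H-2}\dif\sigma\big)\dif r\,\dif s$. Here the approximation sits on the kernel rather than on the derivative. Since $\frac1{2\epsilon}\int_{s-\epsilon}^{s+\epsilon}|\sigma-r|^{2H-2}\dif\sigma\le d_H|s-r|^{2H-2}$, condition (2) genuinely provides the dominating function.
\end{itemize}
One then identifies the symmetric integral with the Young integral using $\gamma>1-H$.

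\textbf{The equivalence claim.} Your plan for showing (1)--(2) is equivalent to (3)--(4) cannot work. Condition (2) constrains only $\bar{\D}u$ and carries no information on $\D u$. For example, for $u_x=I_{1,0}(f_x)$ one has $\bar{\D}u\equiv0$, so (1)--(2) hold automatically, while (4) is a nontrivial requirement on $f$. No comparison through the real derivatives can bridge this. What is true, and what your conjugation remark actually uses, is that $u$ satisfies (1)--(2) if and only if $\bar u$ satisfies (3)--(4), because $\D\bar G=\overline{\bar{\D}G}$ and $\overline{\delta(\bar u)}=\bar\delta(u)$.
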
 
The proofs of Propositions~\ref{sandu-1}-\ref{fuguanxi 2-3} are similar to the results of the real Malliavin calculus.  Please refer to \cite{Nua} for more details.

\subsection{Results regarding a real-valued fractional Brownian bridge}

In this section, let us recall some known results in the literature concerning the real-valued $\alpha$-fractional Brownian bridge given by \eqref{real ALPHA BRIDGE}. These results will be used in our proofs of the main theorems. 

Denote the Wiener integral
\begin{align}\label{xit dingyi}
\xi_t=\int_0^t (T-u)^{-\alpha} \dif B^H_u,\qquad \quad 0\le t< T. 
\end{align} For any fixed $t<T$ and any $\epsilon>0$, the Gaussian process $\xi=(\xi_u)_{u\in[0,t]}$  admits a modification on $[0,t]$ with $(H-\epsilon)$-H\"{o}lder continuous paths.  The stochastic process $X_t$ defined by \eqref{xt bds} can be written as $X_t = (T-t)^\alpha \xi_t$. We have the following results about the processes $X_t$ and $\xi_t$ (see \cite{AACZ}).

\begin{itemize}
    \item When $\alpha\in (0,H)$, it is known that
$\xi_T:=\lim_{t\uparrow T}\xi_t$ exists in $L^2$ and almost surely, and 
\begin{align}\label{xiT2qiw}
\E[\xi_T^2]=\frac{\Gamma(1+2H)}{2(H-\alpha)}\frac{\Gamma(1-\alpha)}{\Gamma(2H-\alpha)} T^{2(H-\alpha)}.
\end{align} Moreover,
 if $\alpha\in (0,H)$, there exists a positive constant $C$ independent of $T$ such that
\begin{align}\label{gima2fang jie-1}
\sigma^2(s,t)=\E\big[(\xi_s-\xi_t)^2\big]\le C\abs{s-t}^{2(H-\alpha)},\quad 0\le s, t \le T.
\end{align}
By this fact, the Gaussian process $\xi_t$ can be extended to the interval $t \in [0, T]$ and admits a modification on $[0,T]$ with $(H-\alpha-\epsilon)$-H\"{o}lder continuous paths for any $0< \epsilon < H-\alpha$.  Consequently, we define $X_T := \lim_{t \to T} X_t = 0$ and the process $X:=(X_t)_{t\in[0,T]}$ is called a $\alpha$-fractional Brownian bridge under the condition of $\alpha \in (0, H)$.
    \item When $\alpha\in (H,1)$, we denote
\begin{align}\label{eta dingyi}
\tilde{X}_t:=\frac{X_t}{(T-t)^H}=(T-t)^{\alpha-H} \int_0^t (T-u)^{-\alpha} \dif B^H_u,\qquad \quad 0\le t< T. 
\end{align}
We find that $\tilde{X}_T:=\lim_{t\uparrow T}\tilde{X}_t$ exists in $L^2$ and 
\begin{align}
\E[\tilde{X}_T^2] =\frac{\alpha H}{\alpha-H}B(2H,\,1+\alpha-2H),\label{zetaT2qiw}
\end{align} 
where $B(\cdot, \cdot)$ is the Beta function. 
\item When $\alpha=H$, the process $\tilde{X}_t:=\frac{\xi_t}{- \log(T-t)}, 0\le t< T,$ satisfies the fact that $\tilde{X}_T:=\lim_{t\uparrow T}\tilde{X}_t$ exists in $L^2$ and
\begin{align}
\E[\tilde{X}_T^2] 
=2H^2 B(2H,\,1-H).\label{zetaT2qiwlinjie}
\end{align}
\end{itemize}

\subsection{Existence of complex $\alpha$-fractional Brownian bridge}\label{sec.2.3}

Based on previous results, we will prove the well-posedness of the complex $\alpha$-fractional Brownian bridge. Assume that the Hurst parameter $ H\in(0, 1)$ and $\omega=(\omega_t)_{t\in[0,T)}$ is the complex Wiener integral given as in \eqref{omega bds}. We first present the results about path regularities of the process $\omega$.

\begin{lemma}\label{xi decompose}
  Assume $ \RE(\alpha)\in(0, 1)$. Then  we have that for all $0\le s\le t < T$,
    \begin{align}\label{xi decompose4}
	\frac{1}{H}\Enum [|\omega_s-\omega_t|^2]=J_1(s, t)+J_2(s, t)+J_3(s, t)-\frac{\mi \bar{\alpha}}{H}\IM({J_1(s, t)}),
    \end{align}
  where
    \begin{align}
	&J_1(s, t)=2H\int_s^t (T-v)^{-\alpha-1}d v\int_v^t (T-u)^{-\bar{\alpha}}(u-v)^{2H-1}\dif u, \label{zhongjianzhy}\\
	&J_2(s, t)=(T-t)^{1-\bar{\alpha}}\int_s^t (T-u)^{-\alpha-1}(t-u)^{2H-1} \dif  u, \\
	&J_3(s, t)=(T-s)^{-\bar{\alpha}}\int_s^t (T-u)^{-\alpha}(u-s)^{2H-1} \dif  u.\label{zhongjzhy 2}
    \end{align}
\end{lemma}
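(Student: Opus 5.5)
The plan is to compute $\Enum[|\omega_s-\omega_t|^2]$ directly as a squared $\FH$-norm and then rearrange the resulting double integral into $J_1,J_2,J_3$. Since $\zeta=\frac{1}{\sqrt2}(B^1+\mi B^2)$ with independent $B^1,B^2$, we have
\[
\Enum[|\omega_t-\omega_s|^2]=\langle f,f\rangle_{\FH},\qquad f(u)=(T-u)^{-\alpha}\mathbf{1}_{[s,t]}(u),
\]
where $\FH$ is complexified, so the inner product is conjugate linear in the second slot. The function $f$ is of bounded variation on $[0,T]$ because $t<T$. For $H<\frac12$ I will apply \eqref{innp fg3-00} of Proposition~\ref{st1-thm-original}, with $g'$ replaced by the distributional derivative of $\bar f$. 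For $H>\frac12$ the same formula follows from \eqref{innp fg3-zhicheng0} by one integration by parts in $s$, using $\partial_s|t-s|^{2H-1}\sgn(t-s)=-(2H-1)|t-s|^{2H-2}$. For $H=\frac12$ the formula is trivial. So for all $H\in(0,1)$,
\[
\frac1H\Enum[|\omega_s-\omega_t|^2]=\int\!\!\int f(x)\,\bar f{}'(\dif y)\,|x-y|^{2H-1}\sgn(x-y)\,\dif x .
\]

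Next I compute the distributional derivative of $\bar f$:
\[
\bar f{}'=(T-s)^{-\bar\alpha}\delta_s-(T-t)^{-\bar\alpha}\delta_t+\bar\alpha(T-y)^{-\bar\alpha-1}\mathbf{1}_{[s,t]}(y)\,\dif y .
\]
This splits the integral into three pieces.
\begin{itemize}
\item The atom at $s$ gives exactly $J_3(s,t)$, since $\sgn(x-s)=1$ for $x>s$.
\item The atom at $t$ gives $(T-t)^{-\bar\alpha}\int_s^t(T-x)^{-\alpha}(t-x)^{2H-1}\dif x$. Writing $(T-x)^{-\alpha}=(T-x)^{-\alpha-1}\big[(T-t)+(t-x)\big]$ turns this into $J_2(s,t)$ plus a remainder
\[
R=(T-t)^{-\bar\alpha}\int_s^t(T-x)^{-\alpha-1}(t-x)^{2H}\dif x .
\]
\item The absolutely continuous part gives
\[
K=\bar\alpha\int_s^t\!\!\int_s^t(T-x)^{-\alpha}(T-y)^{-\bar\alpha-1}|x-y|^{2H-1}\sgn(x-y)\,\dif x\,\dif y .
\]
\end{itemize}

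The remaining work is to show
\[
R+K=J_1(s,t)-\frac{\mi\bar\alpha}{H}\IM\big(J_1(s,t)\big).
\]
First I split $K$ into the regions $x>y$ and $x<y$. On each region I integrate by parts in the variable carrying the power $(T-\cdot)^{-\alpha}$ or $(T-\cdot)^{-\bar\alpha}$, using $\partial_y(T-y)^{-\bar\alpha}=\bar\alpha(T-y)^{-\bar\alpha-1}$, and $\partial_u\big[(u-v)^{2H}/2H\big]=(u-v)^{2H-1}$ to move derivatives between the factors. The boundary terms produced at $x=t$ should cancel $R$, and the boundary terms on the diagonal vanish because the kernel is $|x-y|^{2H}$ there. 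The surviving terms are double integrals of the form
\[
\int_s^t(T-v)^{-\alpha-1}\int_v^t(T-u)^{-\bar\alpha}(u-v)^{2H-1}\,\dif u\,\dif v,
\]
together with their complex conjugates, i.e.\ $J_1$ and $\bar J_1$ up to constant factors involving $\alpha$, $\bar\alpha$ and $2H$. As a consistency check, the left-hand side is real, so the imaginary parts must combine correctly. Using $\bar J_1=J_1-2\mi\IM J_1$ then lets me write the combination as $J_1-\frac{\mi\bar\alpha}{H}\IM J_1$.

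I expect this last bookkeeping step to be the main obstacle: keeping track of which variable is conjugated, the sign function, and the precise constants so that the final coefficient is exactly $\mi\bar\alpha/H$. Integrability is harmless, since for $t<T$ every weight is bounded and $|x-y|^{2H-1}$ is integrable.
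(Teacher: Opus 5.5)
Your set-up matches the method the paper relies on; the paper gives no details beyond referring to Lemma 5.1 of \cite{AACZ}. You express $\Enum|\omega_t-\omega_s|^2$ through the kernel $|x-y|^{2H-1}\sgn(x-y)$ integrated against the Lebesgue--Stieltjes measure of $\bar f$, and split that measure into its two atoms and its absolutely continuous part. Your identification of the atom at $s$ with $J_3$, and of the atom at $t$ with $J_2+R$, is correct.

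The gap is the last step. The identity $R+K=J_1-\frac{\mi\bar\alpha}{H}\IM J_1$ is where all of $J_1$ and the imaginary correction come from, and you predict it rather than derive it. Moreover, the mechanism you describe does not produce it. If you integrate by parts against $|x-y|^{2H-1}$ in the variable carrying $(T-\cdot)^{-\bar\alpha-1}$, the derivative falls on the kernel and gives $|x-y|^{2H-2}$. That is not integrable for $H<\frac12$, and the diagonal boundary terms diverge. If instead you raise the kernel to $|x-y|^{2H}$ by integrating by parts in $x$, two boundary terms appear. On $\{x>y\}$ the term at $x=t$ equals $\frac{\bar\alpha}{2H}\overline{R}$, and on $\{x<y\}$ the term sits at $x=s$. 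Neither cancels $R$, and you are left with further terms to recombine.

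The step closes cleanly as follows.
On $\{x>y\}$ nothing needs to be done: renaming $(x,y)=(u,v)$ shows that this part of $K$ is exactly $\frac{\bar\alpha}{2H}\overline{J_1}$.
On $\{x<y\}$, reuse the splitting you applied to the atom at $t$, namely $(T-x)^{-\alpha}=(T-x)^{-\alpha-1}[(T-y)+(y-x)]$. The contribution of this region to $K$ becomes $-\frac{\bar\alpha}{2H}J_1-\bar\alpha L$, with
\[
L=\int_s^t (T-x)^{-\alpha-1}\int_x^t (T-y)^{-\bar\alpha-1}(y-x)^{2H}\,\dif y\,\dif x .
\]
Now the kernel vanishes on the diagonal. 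A single integration by parts in $y$, using $\bar\alpha(T-y)^{-\bar\alpha-1}=\partial_y(T-y)^{-\bar\alpha}$, gives exactly $R$ as the boundary term at $y=t$ and $-J_1$ as the remaining integral. So $\bar\alpha L=R-J_1$, and therefore
\[
R+K=R+\frac{\bar\alpha}{2H}\overline{J_1}-\frac{\bar\alpha}{2H}J_1-(R-J_1)=J_1-\frac{\mi\bar\alpha}{H}\IM J_1 .
\]
This holds for every $H\in(0,1)$, since all weights are bounded on $[s,t]$ when $t<T$. With this computation inserted, your argument proves the lemma.
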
 
The proof is similar to that of Lemma 5.1 in \cite{AACZ}.

\begin{proposition}\label{dianzeduliang}
Assume $ \RE(\alpha)\in (0,1)$. For any fixed $t\in (0,T)$, there exists a positive constant $C_{t,T}$ depending on $t,\,T$ such that
\begin{align}\label{gima2fang jie}
\sigma^2(u,v):=\E\big[\abs{\omega_u-\omega_v}^2\big]\le C_{t,T}\abs{u-v}^{2H },\quad 0\le u, v\le t.
\end{align} Hence the complex Gaussian process $\omega$ admits a modification on $[0,t]$ with $(H-\epsilon)$-H\"{o}lder continuous paths for any fixed $t<T$ and any $0<\epsilon<H$.

Moreover, if $ \lambda=\RE(\alpha) \in (0, H)$, then there exists a positive constant $C$ independent of $T$ such that
  \begin{align}\label{gima2fang jie}
      \sigma^2(s,t)=\E\big[\abs{\omega_s-\omega_t}^2\big]\le C\abs{s-t}^{2(H-\lambda)},\quad 0\le s, t< T.
  \end{align}
\end{proposition}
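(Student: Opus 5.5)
The plan is to use Lemma~\ref{xi decompose}, which expresses $\frac1H\E|\omega_s-\omega_t|^2$ through the three integrals $J_1,J_2,J_3$ plus the correction $-\frac{\mi\bar\alpha}{H}\IM(J_1)$. It then suffices to bound $|J_1|,|J_2|,|J_3|$ separately, using $|(T-u)^{-\alpha}|=(T-u)^{-\lambda}$ and $|(T-u)^{-\bar\alpha}|=(T-u)^{-\lambda}$. Since the left side is real and nonnegative, it is bounded by $C(|J_1|+|J_2|+|J_3|)$ with $C$ depending only on $|\alpha|$ and $H$. So everything reduces to real integrals of the type treated in the real case, with exponent $\lambda$ in place of $\alpha$.

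For the first claim, fix $t<T$ and take $0\le s\le v\le t$ in the roles of $(s,t)$. On $[0,t]$ all powers $(T-u)^{\pm\lambda}$ and $(T-u)^{-\lambda-1}$ are bounded by constants $C_{t,T}$, because $T-u\ge T-t>0$.
\begin{itemize}
\item $|J_3|\le C_{t,T}\int_s^t(u-s)^{2H-1}\dif u=C_{t,T}(t-s)^{2H}/(2H)$.
\item $J_2$ is handled the same way.
\item $|J_1|\le C_{t,T}\int_s^t\dif v\int_v^t(u-v)^{2H-1}\dif u\le C_{t,T}(t-s)^{2H+1}\le C'_{t,T}(t-s)^{2H}$.
\end{itemize}
This gives the first bound. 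Kolmogorov's continuity criterion for Gaussian processes, with moments controlled by hypercontractivity since $\omega$ lies in the first chaos, then yields $(H-\epsilon)$-H\"older modifications.

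For the uniform bound with $\lambda\in(0,H)$ and $0\le s<t<T$, we bound each term by $C(t-s)^{2(H-\lambda)}$ with $C$ independent of $T$.
\begin{itemize}
\item \emph{Bound for $J_3$.} Use $(T-u)^{-\lambda}\le (u-s)^{-\lambda}$ (as $u\le T$) and $(T-s)^{-\lambda}\le (t-s)^{-\lambda}$. This gives $|J_3|\le (t-s)^{-\lambda}\int_s^t(u-s)^{2H-1-\lambda}\dif u\le C(t-s)^{2H-2\lambda}$.
\item \emph{Bound for $J_2$.} Write $|J_2|=(T-t)^{1-\lambda}\int_s^t(T-u)^{-\lambda-1}(t-u)^{2H-1}\dif u$. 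Use $(T-u)^{-\lambda-1}\le (T-t)^{a}(t-u)^{-\lambda-1-a}$ with $a=\lambda-1$, i.e.\ split $(T-u)^{-\lambda-1}=(T-u)^{-2\lambda}(T-u)^{\lambda-1}$. Then $(T-u)^{\lambda-1}\le (T-t)^{\lambda-1}$ and $(T-u)^{-2\lambda}\le(t-u)^{-2\lambda}$, which gives $|J_2|\le\int_s^t(t-u)^{2H-1-2\lambda}\dif u=C(t-s)^{2H-2\lambda}$.
\item \emph{Bound for $J_1$, the main obstacle.} The inner integral is bounded by $(T-v)^{-\lambda}\int_v^t(u-v)^{2H-1}\dif u\le C(T-v)^{-\lambda}(t-v)^{2H}$, using $(T-u)^{-\lambda}\le(T-v)^{-\lambda}$? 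No: $T-u\le T-v$, so that inequality goes the wrong way. Instead use $(T-u)^{-\lambda}\le (u-v)^{-\lambda}$, giving inner $\le C(t-v)^{2H-\lambda}$. Then $|J_1|\le C\int_s^t(T-v)^{-\lambda-1}(t-v)^{2H-\lambda}\dif v$. Apply the same splitting trick, $(T-v)^{-\lambda-1}\le (t-v)^{-\lambda-1}$, to get $\int_s^t (t-v)^{2H-2\lambda-1}\dif v=C(t-s)^{2H-2\lambda}$, finite because $H>\lambda$.
\end{itemize}
This last estimate is the delicate step: the exponents must be distributed so that no factor of $T$ survives and the final integrand remains integrable. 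The condition $\lambda<H$ is exactly what makes this work.

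Combining the bounds on $J_1,J_2,J_3$ with Lemma~\ref{xi decompose} gives $\sigma^2(s,t)\le C|s-t|^{2(H-\lambda)}$ with $C$ independent of $T$.
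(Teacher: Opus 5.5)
Your strategy is the one the paper intends; it defers to Proposition~5.2 of \cite{AACZ}. You bound $|J_1|,|J_2|,|J_3|$ from Lemma~\ref{xi decompose} term by term and absorb the correction via $|\IM J_1|\le |J_1|$. Your first claim (on $[0,t]$ all weights are bounded) is handled correctly. So is $J_2$, via $(T-u)^{-\lambda-1}=(T-u)^{-2\lambda}(T-u)^{\lambda-1}$.

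\textbf{The error.} You use the following inequality twice, justified ``as $u\le T$'':
\[
(T-u)^{-\lambda}\le (u-s)^{-\lambda}\ \text{ (for } J_3\text{)}, \qquad (T-u)^{-\lambda}\le (u-v)^{-\lambda}\ \text{ (inner integral of } J_1\text{)}.
\]
This is false. It needs $T-u\ge u-s$, i.e.\ $u\le (T+s)/2$. It fails badly for $s=0$ and $u$ near $t\approx T$: there the left side is of order $(T-t)^{-\lambda}$, while the right side stays near $T^{-\lambda}$.

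Even the bound your chain produces, $|J_3|\le (2H-\lambda)^{-1}(t-s)^{2H-2\lambda}$, is wrong. For real $\alpha=\lambda$ and $s=0$,
\[
J_3(0,t)\to B(1-\lambda,2H)\,T^{2H-2\lambda} \quad \text{as } t\uparrow T,
\]
and $B(1-\lambda,2H)>(2H-\lambda)^{-1}$, e.g.\ for $\lambda=\frac12$, $H=0.6$.

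\textbf{The fix.} Compare with the right endpoint instead. For $u\le t<T$ you have $T-u\ge t-u$, hence $(T-u)^{-\lambda}\le (t-u)^{-\lambda}$, and
\[
\int_s^t (t-u)^{-\lambda}(u-s)^{2H-1}\dif u = B(1-\lambda,2H)\,(t-s)^{2H-\lambda},
\]
which is finite because $\lambda<1$.
\begin{itemize}
\item For $J_3$: combined with $(T-s)^{-\lambda}\le (t-s)^{-\lambda}$, this gives $|J_3|\le B(1-\lambda,2H)\,(t-s)^{2H-2\lambda}$.
\item For $J_1$: in the same way the inner integral is at most $B(1-\lambda,2H)\,(t-v)^{2H-\lambda}$. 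Your remaining step then goes through unchanged: $(T-v)^{-\lambda-1}\le (t-v)^{-\lambda-1}$, and $\int_s^t (t-v)^{2H-2\lambda-1}\dif v<\infty$ since $\lambda<H$.
\end{itemize}
All constants then depend only on $H$, $\lambda$ and $|\alpha|$, so the $T$-independent bound holds.
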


Next, we study the limiting behavior of the process $\omega_t$ with appropriate normalization.
\begin{proposition}\label{prop Y}
Assume $\RE(\alpha) \in [H, 1)$. Define a scaling limit of $\omega_t$ as
			\begin{align}\label{Y_t}
				Y_t=\begin{cases}
                \frac{\omega_t}{\sqrt{2|\log(T-t)|}} , \qquad & {\rm if} \ \RE(\alpha)=H;\\
				\sqrt{\RE(\alpha) -H}\,(T-t)^{\RE(\alpha)-H}  \omega_t,\qquad& {\rm if} \ \RE(\alpha) \in(H,1).
				\end{cases}
			\end{align}
We have that $Y_T: =\lim_{t\to T}Y_t$ exists in $L^2$ and that 
	\begin{align}
		\Enum [|Y_T|^2]
        = \frac12{\Gamma(1+2H)} \RE\left(\frac{\Gamma(1+\alpha-2H)}{\Gamma(\alpha)}\right)
	\end{align}
and
    $$\Enum [\zeta_s\overline{Y_T}]=0, \qquad \forall s\in[0, T].$$
\end{proposition}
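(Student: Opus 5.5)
Three things have to be shown: that $Y_t$ converges in $L^2$, that $\Enum[|Y_T|^2]$ has the stated value, and that $Y_T$ is uncorrelated with every $\zeta_s$. The plan is to work directly with the covariance of the Wiener integral $\omega_t=\int_0^t (T-u)^{-\alpha}\dif\zeta_u$, treating $H>\frac12$ through the kernel \eqref{innp fg3-zhicheng0} and $H<\frac12$ through Proposition~\ref{st1-thm-original}.

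\textbf{Step 1: reduce to a two-parameter covariance.} Since $Y_t$ is a deterministic scalar $c(t)$ times $\omega_t$, we have $\Enum[Y_t\overline{Y_s}]=c(t)c(s)\Enum[\omega_t\overline{\omega_s}]$. Hence $Y_t$ is Cauchy in $L^2$ as $t\uparrow T$ exactly when $\Enum[Y_t\overline{Y_s}]$ has a finite limit $L$ as $s,t\uparrow T$ jointly; then $\Enum|Y_t-Y_s|^2\to L+L-2\RE L=0$. The covariance has the explicit form
\[
\Enum[\omega_t\overline{\omega_s}]=\innp{(T-\cdot)^{-\alpha}\mathbf 1_{[0,t]},(T-\cdot)^{-\alpha}\mathbf 1_{[0,s]}}_{\FH}.
\]
For $H>\frac12$ this is the double integral $H(2H-1)\int_0^t\int_0^s (T-u)^{-\alpha}(T-v)^{-\bar\alpha}|u-v|^{2H-2}\dif v\dif u$. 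For $H<\frac12$ the same object comes from \eqref{innp fg3-00}, and Lemma~\ref{xi decompose} gives an analogous decomposition into $J_1$, $J_2$, $J_3$.

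\textbf{Step 2: find the divergent rate and its constant.} Substitute $x=T-u$ and $y=T-v$, so the integration runs over $[T-t,T]\times[T-s,T]$. Then split the inner variable as $y=xr$, with $r\le1$ and $r\ge1$ treated symmetrically. This gives $x^{-2\lambda+2H-1}$ times $\int (r^{-\bar\alpha}+r^{-\alpha})(1-r)^{2H-2}\,\dif r$, up to truncation effects at the boundaries $T-t$ and $T-s$.

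When $\RE\alpha>H$, integrating $x^{2H-2\lambda-1}$ from $T-t$ gives $(T-t)^{2(H-\lambda)}/(2(\lambda-H))$. The normalization $\sqrt{\lambda-H}\,(T-t)^{\lambda-H}$ cancels this rate and leaves a constant. When $\RE\alpha=H$, the $x$-integral is $|\log(T-t)|$, and the factor $2$ in the normalization absorbs the two symmetric halves.

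The $r$-integrals are Beta functions: $B(1-\alpha,2H-1)$ plus its conjugate, together with the tail piece for $r>1$, $\int_1^\infty r^{-\alpha}(r-1)^{2H-2}\dif r=B(\alpha+1-2H,2H-1)$ after the substitution $r\mapsto 1/r$ and combination. I expect the combination to collapse to $\frac12\Gamma(1+2H)\RE\big(\Gamma(1+\alpha-2H)/\Gamma(\alpha)\big)$ via the reflection formula and $H(2H-1)\Gamma(2H-1)=\frac12\Gamma(2H+1)$. The real-valued analogue \eqref{zetaT2qiw} is the consistency check at $w=0$, since $\alpha H\,B(2H,1+\alpha-2H)/(\alpha-H)\cdot(\alpha-H)$ matches this expression.

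Terms in which one variable stays away from $T$ contribute $O(1)$ to $\Enum[\omega_t\overline{\omega_s}]$. Multiplied by $c(t)c(s)\to0$, they vanish. The boundary-truncation errors must also be shown uniform in $s,t$ near $T$; this follows from dominated convergence, using $\RE\alpha<1$ for integrability at $r=0$ and $2H-2>-1$ near $r=1$.

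\textbf{Step 3: orthogonality.} By isometry, $\Enum[\zeta_s\overline{Y_T}]=\lim_t c(t)\,\Enum[\zeta_s\overline{\omega_t}]$. The inner product $\Enum[\zeta_s\overline{\omega_t}]=\innp{\mathbf 1_{[0,s]},(T-\cdot)^{-\alpha}\mathbf 1_{[0,t]}}_{\FH}$ grows at most like $\int^{T}(T-u)^{-\lambda}|u-v|^{2H-2}\dif u$. If $\lambda<1$ and $v\le s<T$ this stays bounded, and if $s=T$ it grows only like $(T-t)^{2H-1-\lambda}$ or similar. The task is to check that $c(t)$ times this tends to zero: $(T-t)^{\lambda-H}\cdot(T-t)^{\min(0,2H-1-\lambda)+\text{small}}\to0$ requires comparing exponents. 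The rate is $(T-t)^{H-1}$ at worst, which fails when $H<1$. So I would instead use the finer bound $\Enum[\zeta_s\overline{\omega_t}]=O((T-t)^{H-\lambda}+1)$, derived by writing $\innp{\cdot}$ via $\frac{\partial}{\partial v}R_H$, i.e. $H\int_0^t (T-u)^{-\bar\alpha}(|u|^{2H-1}+\sgn(s-u)|s-u|^{2H-1})\dif u$. The bracket vanishes to order $(T-u)$ near $u=s=T$, which yields the extra decay. In the logarithmic case, boundedness divided by $\sqrt{|\log|}$ suffices. I expect this uniform control to be the main obstacle, together with the Beta-function bookkeeping in Step 2.
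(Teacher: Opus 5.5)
There is a genuine gap in Step~2. You assert that the truncation errors are uniform in $s,t$ near $T$, so that $c(t)c(s)\,\Enum[\omega_t\overline{\omega_s}]$ has a joint limit. This cannot be proved, because the $L^2$ part of the statement is false. The paper gives no argument of its own here; it only points to \cite{AACZ}.

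In fact the three assertions of the statement are mutually inconsistent. Every $Y_t$ lies in the closed complex-linear span $\mathcal L$ of $\{\zeta_s:s\in[0,T]\}$ in $L^2(\Omega)$, so an $L^2$ limit $Y_T$ would lie in $\mathcal L$ too. Then $\Enum[\zeta_s\overline{Y_T}]=0$ for all $s$ would give $\Enum|Y_T|^2=0$, contradicting the positive limit of $\Enum|Y_t|^2$.

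Concretely, the off-diagonal covariance depends on the ratio $(T-s)/(T-t)$. For $H=\frac12$ one has $\Enum[\omega_t\overline{\omega_s}]=\int_0^{s}(T-u)^{-2\lambda}\dif u$ for $s\le t$. Along $T-s=\kappa(T-t)$ with $\kappa>1$ fixed, this gives $\Enum|Y_t-Y_s|^2\to1-\kappa^{-(\lambda-1/2)}$. When $\lambda=H=\frac12$, the choice $T-t=(T-s)^2$ gives $\Enum|Y_t-Y_s|^2\to1-2^{-1/2}$.

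For general $H$ and $\lambda>H$, set $f_c(a):=a^{-\alpha}\mathbf 1_{[c,\infty)}(a)$. Time reversal and self-similarity give $\Enum[Y_t\overline{Y_s}]\to(\lambda-H)\kappa^{\lambda-H}\innp{f_1,f_\kappa}_{\FH}$, while both variances tend to $(\lambda-H)\norm{f_1}_{\FH}^2$. Since $\norm{f_\kappa}_{\FH}=\kappa^{H-\lambda}\norm{f_1}_{\FH}$ and $f_1,f_\kappa$ are not proportional, Cauchy--Schwarz is strict. So $Y_t$ behaves like a stationary process in the variable $\log\frac{1}{T-t}$, not like a convergent one.

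What is true, and what your computations can deliver, is weaker. First, $\Enum|Y_t|^2\to\frac12\Gamma(1+2H)\RE\big(\Gamma(1+\alpha-2H)/\Gamma(\alpha)\big)$, so $Y_t$ converges in law to a circular complex Gaussian. Second, $\Enum[\zeta_s\overline{Y_t}]\to0$ for every $s$, i.e.\ $Y_t\to0$ weakly in $L^2(\Omega)$. This weaker form is exactly what Section~4 needs for the asymptotic independence of $(T-t)^{1-H-\lambda}A_t$ and $\omega_T$.

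Two smaller points concern that corrected statement. First, for $\lambda>H$ the truncation $r\ge (T-t)/x$ in your Step~2 is not a lower-order effect. The integral $\int_{T-t}^T x^{2H-2\lambda-1}\dif x$ is carried by $x\asymp T-t$, where $(T-t)/x$ is of order one. Substitute $x=(T-t)/\rho$ and exchange the order of integration. The region $y<x$ then contributes $\frac{(T-t)^{2H-2\lambda}}{2(\lambda-H)}B(\alpha+1-2H,2H-1)+o\big((T-t)^{2H-2\lambda}\big)$, and the region $y>x$ contributes its complex conjugate. With $H(2H-1)\Gamma(2H-1)=\frac12\Gamma(1+2H)$ this is the stated constant, and no reflection formula is needed. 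The untruncated $B(1-\alpha,2H-1)$ you wrote down is the relevant Beta function only at $\lambda=H$. There its real part coincides with the stated one because $1+\alpha-2H=\overline{1-\alpha}$ and $\alpha=\overline{2H-\alpha}$.

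Second, in Step~3 with $s=T$, the bracket $v^{2H-1}+(T-v)^{2H-1}$ does not vanish at $v=T$. Your bound $O\big((T-t)^{H-\lambda}+1\big)$ would only give $c(t)\,\Enum[\zeta_T\overline{\omega_t}]=O(1)$. The direct estimate suffices:
\[
\abs{\Enum[\zeta_T\overline{\omega_t}]}\le H\int_0^t(T-v)^{-\lambda}\big(v^{2H-1}+(T-v)^{2H-1}\big)\dif v .
\]
This is bounded if $\lambda<2H$ (in particular if $\lambda=H$), $O(\abs{\log(T-t)})$ if $\lambda=2H$, and $O\big((T-t)^{2H-\lambda}\big)$ if $\lambda>2H$. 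In each case it is $o\big((T-t)^{H-\lambda}\big)$ when $\lambda>H$. For fixed $s<T$ the inner product stays bounded as $t\to T$.
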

Proofs of Propositions~\ref{dianzeduliang}-\ref{prop Y} are similar to those of Proposition 5.2 and Theorems 1.4-1.5 in \cite{AACZ}. To conclude this section, we will show the well-posedness of the complex $\alpha$-fractional Brownian bridge as stated in Theorem~\ref{fBr existence}.

{\bf Proof of Theorem~\ref{fBr existence} : }
It follows from equation \eqref{xi decompose4} that 
\begin{align}
\E[\xi_T^2]=\lim_{t\uparrow T}\E[\xi_t^2] =H\times \left[J_1(0,T)+J_2(0,T)+J_3(0,T)-\frac{\mi \bar{\alpha}}{H}\IM({J_1(0, T)}) \right],\label{qidiands}
\end{align}where $J_1,J_2, J_3$ are given in equations \eqref{zhongjianzhy}-\eqref{zhongjzhy 2}.
It is evident that
\begin{align}
J_3(0,T)&=T^{-\bar{\alpha}}  \int_{0}^T(T-u)^{-\alpha}  u^{2H-1}  \dif u=B(1-\alpha, 2H)T^{2(H-\lambda)},\label{i3tjixian}
\end{align}
and
\begin{align}
\frac{1}{2H}J_1(0,T)&=\int_{0}^{ T}(T-u)^{-\alpha-1}\dif u \int_{u}^T (T-v)^{-\bar{\alpha}}  (v-u)^{2H-1} \dif v\notag \\
&=\frac{B(1-\bar{\alpha}, 2H)}{2(H-\lambda)}T^{2(H-\lambda)}.\label{J1tjixian}
\end{align}
By the change of variables $y=T-t,\,x=t-u$, and Lebesgue's dominated theorem,
\begin{align}
\abs{J_2(0,T)}&=\lim_{t\uparrow T} (T-t)^{1- {\lambda}}\int_{0}^{ t}(T-u)^{-\lambda-1} (t-u)^{2H-1}\dif u\notag \\
&=\lim_{y\to 0+} y^{1-\lambda}\int_{0}^{ T}(y+x)^{-\lambda-1} x^{2H-1}\mathbf{1}_{[0,T-y]}(x)\dif x\notag \\
&=\lim_{y\to 0+}\int_{0}^{ T}\left(\frac{y}{y+x}\right)^{1-\lambda} \left(\frac{x}{y+x}\right)^{2\lambda}  x^{2(H-\lambda)-1} \dif x=0\label{j0tjixian}
\end{align} when $\lambda  \in (0, H)$.
 Plugging the facts \eqref{i3tjixian}-\eqref{j0tjixian} into equation \eqref{qidiands}, we obtain the desired result \eqref{wt2 norm}. Finally, by applying Proposition~\ref{dianzeduliang} and the Kolmogorov-Centsov theorem, we have that when $\lambda \in (0, H)$, the Gaussian process $\omega:=\{\omega_t, t \in [0, T]\}$ admits a modification on $[0,T] $ with $(H-\lambda-\epsilon)$-H\"{o}lder continuous paths for all $\epsilon\in (0,H-\lambda)$.
 {\hfill\large{$\Box$}} 

\section{The strong consistency of the LSE}

In this section, we prove Theorem~\ref{main result} to show that the LSE of parameter $\alpha$ is strongly consistent. To achieve this, we rewrite equation \eqref{ALPHA LSE} as a ratio process in terms of complex Young integrals:
\begin{align}\label{qidian}
    \hat{\alpha}_t -\alpha =\frac{\int_{0}^t (T-u)^{\bar{\alpha}-1}\bar{\omega}_u\dif \zeta_u }{\int_0^t (T-u)^{2\lambda-2}|\omega_u|^2 \dif  u}.
\end{align}
We must study the limiting behavior of both the numerator and the denominator of the ratio process.  

We first get some results regarding two specific complex Wiener-It\^o chaos processes defined as follows. Denote
    \begin{align}\label{hehanshu01}
	\psi_t(u, v)=(T-u)^{\overline{\alpha}-1}(T-v)^{-\overline{\alpha}}\mathbf{1}_{\{0\leq v\leq u\leq t\}}.
    \end{align}
Define the complex Wiener-It\^o chaos process 
$G=(G_t)_{t\in[0,T)}:=\left(I_{1,1}(\psi_{t} )\right)_{t\in[0,T)}.$ 
Making the change of variable $s=\frac{1}{T-t}$ yields another complex Wiener-It\^o chaos process 
\begin{align}\label{F bds}
    F=(F_s)_{s\in[\frac1T,\infty)}:= \left(I_{1,1}\left(\psi_{T-\frac{1}{s}}\right)\right)_{s\in[\frac1T,\infty)}.
\end{align} The limiting behavior and the regularity property of the two processes are presented in the following propositions.

\begin{proposition}\label{prop.3.2}
The complex Winere-It\^o chaos process $G:=( I_{1,1}(\psi_t ))_{t\in [0,T)}$, where the complex function $\psi_t$ is given by \eqref{hehanshu01}, satisfies
\begin{align*}
    \begin{cases}
     \limsup_{t\to T}(T-t)^{2(1-H-\lambda)}\E[\abs{G_t}^2] <\infty, \qquad & {\rm if} \ \RE(\alpha)\in (0, 1-H);\\
    \limsup_{t\to T}\frac{1}{-\log (T-t)} \E[\abs{G_t}^2] <\infty, \qquad & {\rm if} \ \RE(\alpha)=1-H;\\
	\lim_{t\to T}\E[\abs{G_t}^2]<\infty,\qquad& {\rm if} \ \RE(\alpha) \in(1-H,1).
	\end{cases}
\end{align*}
\end{proposition}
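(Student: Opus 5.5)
By the isometry property \eqref{isometry property} with $a=b=1$, I will write $\E[|G_t|^2]=\|\psi_t\|^2_{\FH^{\otimes 2}}$. Since $H>\frac12$, I will then use \eqref{innp fg3-zhicheng0} in each variable to get the explicit fourfold integral
\begin{align*}
\E[|G_t|^2]=H^2(2H-1)^2\int_{[0,t]^4}\psi_t(u_1,v_1)\overline{\psi_t(u_2,v_2)}\,|u_1-u_2|^{2H-2}|v_1-v_2|^{2H-2}\,\dif u_1\dif v_1\dif u_2\dif v_2 .
\end{align*}
Bounding the modulus of $\psi_t$ by $(T-u)^{\lambda-1}(T-v)^{-\lambda}\mathbf 1_{\{v\le u\le t\}}$ gives
\begin{align*}
\E[|G_t|^2]\le C\int_{[0,t]^4}\mathbf 1_{\{v_i\le u_i\}}\prod_{i=1}^2 (T-u_i)^{\lambda-1}(T-v_i)^{-\lambda}\,|u_1-u_2|^{2H-2}|v_1-v_2|^{2H-2}\,\dif u\dif v .
\end{align*}
This removes the oscillating factor $(T-\cdot)^{\mi w}$ and reduces the problem to the real case. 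It is the same integral one meets for the real $\lambda$-fractional Brownian bridge in \cite{EN}, so the three regimes should come out exactly as for the real numerator there.

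To estimate it, I will first integrate out the $v$-variables. For fixed $u_1,u_2$, consider
\begin{align*}
K(u_1,u_2)=\int_0^{u_1}\!\!\int_0^{u_2}(T-v_1)^{-\lambda}(T-v_2)^{-\lambda}|v_1-v_2|^{2H-2}\,\dif v_1\dif v_2 .
\end{align*}
Since $\lambda<1$, the weight $(T-v)^{-\lambda}$ is integrable up to $T$. The kernel estimate of the type used for $\E[\xi_t^2]$ in Subsection~\ref{sec.2.3}, together with Proposition~\ref{dianzeduliang}, then gives $K(u_1,u_2)\le C$ uniformly when $\lambda<H$, or $K\le C(T-u_1\vee u_2)^{2(H-\lambda)}$ up to constants. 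Precisely, $K=\|(T-\cdot)^{-\lambda}\mathbf 1_{[0,u_1]}\|\,\|\cdot\mathbf 1_{[0,u_2]}\|$-type inner products, bounded by $C\,\E[|\xi_{T}|^2]$ or by the growth $(T-u)^{2(H-\lambda)}$ when $\lambda>H$. I will mostly use the crude bound $K\le C(1+(T-u_1)^{H-\lambda}(T-u_2)^{H-\lambda})$. Substituting $x_i=T-u_i$, the remaining integral is
\begin{align*}
\int_{[T-t,T]^2}x_1^{\lambda-1}x_2^{\lambda-1}|x_1-x_2|^{2H-2}\,\dif x_1\dif x_2 ,
\end{align*}
plus a similar term with exponents $H-1$. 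The integral is governed by scaling near $x_i\downarrow T-t=:\varepsilon$. The homogeneity degree is $2\lambda-2+2H-2+2=2(\lambda+H-1)$. So it behaves like $\varepsilon^{-2(1-H-\lambda)}$ if $\lambda<1-H$, like $|\log\varepsilon|$ if $\lambda=1-H$, and stays bounded (converging to an integral over $(0,T]^2$) if $\lambda>1-H$. In each case I will verify this by rescaling $x_i=\varepsilon y_i$ and checking the local integrability of $|y_1-y_2|^{2H-2}$. This gives the three limsup statements. In the third case I will additionally need existence of $\lim_{t\to T}\E[|G_t|^2]$, not just boundedness. For that I will show $G_t$ is Cauchy in $L^2$: $\E|G_t-G_s|^2=\|\psi_t-\psi_s\|^2$ is a tail of the convergent integral above, which tends to zero by dominated convergence. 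Hence $G_t\to G_T$ in $L^2$ and $\E|G_t|^2\to \E|G_T|^2$.

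The main obstacle is the $v$-integration. A careless bound on $K$ could spoil the exponent, because when $\lambda>H$ the inner weight grows near $T$ and couples with the $u$-singularity. I will handle it by splitting each $v$-region into $v_i\le u_i/2$-type pieces versus near-diagonal pieces. Alternatively, I will directly check that the worst contribution, of order $x_1^{\lambda-1+H-\lambda}$, has degree $2(H-1)+2H-2+2=2(2H-1)-2>-2$ near the origin. That would make it dominated by the main term in every regime, since $H-1\ge\lambda-1$ fails only when $\lambda>H$, and there the exponent is still integrable.
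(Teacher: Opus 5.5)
Your proposal is correct. In the regimes $\lambda<1-H$ and $\lambda=1-H$ it is essentially the paper's proof. The paper drops the ordering $v_i\le u_i$ and factorizes the absolute integral into two real integrals: the $v$-integral, which is bounded because $\lambda<H$ (by \eqref{xiT2qiw}), and the $u$-integral with parameter $1-\lambda\ge H$, whose growth it reads off \eqref{zetaT2qiw}--\eqref{zetaT2qiwlinjie}. Your uniform bound $K\le C$ followed by scaling does the same thing.

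In the regime $\lambda\in(1-H,1)$ your route genuinely differs.
\begin{itemize}
\item The paper invokes Proposition~\ref{last estimeate}. It uses the conjugate-symmetry relation \eqref{duiouguanxi} to trade $\psi_t$ with parameter $\alpha$ for $\phi_t$ with parameter $1-\bar\alpha$, whose real part $1-\lambda$ lies in $(0,H)$. It then splits the ordered domain into three subdomains and evaluates or bounds each piece by explicit Beta functions.
\item You keep the ordering and apply Cauchy--Schwarz in $\FH$ to the inner $v$-integral. This gives $K(u_1,u_2)\le C(\E\xi_{u_1}^2\,\E\xi_{u_2}^2)^{1/2}\le C(T-u_1)^{H-\lambda}(T-u_2)^{H-\lambda}$ for $\lambda>H$, and $K\le C$ for $\lambda<H$. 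Everything then reduces to the finiteness of two real integrals over $(0,T]^2$, and you get the limit from an $L^2$-Cauchy/dominated-convergence argument.
\end{itemize}
Your route is shorter and uses only the one-dimensional variance asymptotics. It also yields $L^2$-convergence of $G_t$ itself, not only of its second moment, for every $\lambda\in(1-H,1)$. The paper's route produces explicit constants and handles $I_{1,1}(\phi_t)$, which is needed in Section 4, at the same time.

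Two small repairs are needed.
\begin{itemize}
\item At $\lambda=H$ your crude bound $K\le C(1+(x_1x_2)^{H-\lambda})$ collapses to a constant. That is false, since $K$ grows logarithmically there. Either use Cauchy--Schwarz with the logarithmic growth of $\E\xi_u^2$, or dominate $(T-v)^{-H}\le T^{\delta}(T-v)^{-H-\delta}$ and apply your $\lambda>H$ bound with a small $\delta>0$.
\item For $\lambda<1-H$, the rescaled integral over $[1,T/\varepsilon]^2$ needs integrability at infinity, not just local integrability of the diagonal singularity. This holds because the integrand is homogeneous of degree $2(\lambda+H)-4<-2$; alternatively, quote \eqref{zetaT2qiw} with parameter $1-\lambda$ as the paper does.
\end{itemize}
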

\begin{proof}
 Denote the constant $C_H=H(2H-1)$ where $H \in (\frac{1}{2}, 1)$. By It\^o's isometry \eqref{isometry property}, we have that 
\begin{align}
  \E[\abs{G_t}^2]&=C_H^2\int_{0\leq v_1\leq u_1\leq t,\,0\leq v_2\leq u_2\leq t}(T-u_1)^{\overline{\alpha}-1}(T-v_1)^{-\overline{\alpha}} (T-u_2)^{ {\alpha}-1}(T-v_2)^{- {\alpha}} \notag\\
  &\times \abs{u_1-u_2}^{2H-2}\abs{v_1-v_2}^{2H-2} \dif u_1\dif u_2 \dif v_1\dif v_2\label{bdshi gt2}.
\end{align} 
Case 1: Assume $\lambda\in (0,1-H)$. It follows from equation \eqref{bdshi gt2} that 
\begin{align}
  & \limsup_{t\to T} (T-t)^{2(1-H-\lambda)}\E[\abs{G_t}^2]\notag \\
  &\le C_H \lim_{t\to T} (T-t)^{2(1-H-\lambda)} \int_{(0,t)^2} (T-u_1)^{\lambda-1}(T-u_2)^{\lambda-1}\abs{u_1-u_2}^{2H-2}\dif u_1\dif u_2\notag\\
   &\times  C_H  \lim_{t\to T}   \int_{(0,t)^2} (T-v_1)^{-\lambda}(T-v_2)^{-\lambda}\abs{v_1-v_2}^{2H-2}\dif v_1\dif v_2\notag\\
   &=\frac{(1-\lambda)H}{1-\lambda-H}B(2H,\,2-\lambda-2H) \frac{\Gamma(2H)}{2(H-\lambda)}\frac{\Gamma(1-\lambda)}{\Gamma(2H-\lambda)} T^{2(H-\lambda)}, \label{gt2norm shangjie}
\end{align} where in the last line we use the results of \eqref{xiT2qiw} and \eqref{zetaT2qiw}, and It\^{o}'s isometry.\\
Case 2: Assume $\lambda=1-H$. Using results of \eqref{xiT2qiw} and \eqref{zetaT2qiwlinjie}, and It\^{o}'s isometry, we have 
\begin{align}
   \limsup_{t\to T}\frac{1}{-\log (T-t)} \E[\abs{G_t}^2] \le H^2 B(2H,\,1-H)\frac{\Gamma(2H)}{2H-1}\frac{\Gamma(H)}{\Gamma(3H-1)} T^{2(2H-1)}.
   \end{align}
Case 3: Let $\lambda\in(1-H, 1)$. Proposition~\ref{last estimeate} implies that 
$\lim_{t\to T}\E[\abs{G_t}^2]<\infty$.
\end{proof}

\begin{proposition}\label{zengliang kongzhi}
  For the complex Wiener-It\^o chaos process $F=(F_s)_{s\in[\frac1T,\infty)}$ defined by equation \eqref{F bds}, there exists a positive constant $C_T$  such that for all $\frac1T \le s\le t<\infty$,
  \begin{align}\label{cha zengliang}
      \E\left(\abs{F_t-F_s}^2\right) \le C_T \abs{t-s}^{1-\RE(\alpha)}.  \end{align} 
Moreover, when $\RE(\alpha) \in(1-H,H)$, it holds true that
\begin{align}\label{Finfty dyi}
    \lim\limits_{s\to \infty}  {F_s} =\lim\limits_{t\to T}  {G_t} < \infty
\end{align} in the sense of both $L^2(\Omega)$ and a.s.. Then we define the $(1,1)$-th Wiener chaos random variable $F_\infty$ by $F_\infty := \lim\limits_{s\to \infty}  {F_s}$.
\end{proposition}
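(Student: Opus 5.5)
We prove the increment bound \eqref{cha zengliang} directly from the isometry \eqref{isometry property}, and then obtain \eqref{Finfty dyi} from it by a Kolmogorov/Garsia--Rodemich--Rumsey argument together with hypercontractivity.

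\textbf{Step 1: reduce the increment to a Wiener-integral kernel.} Write $a=T-\frac1t$ and $b=T-\frac1s$, so that $a\ge b$ and $a-b=\frac{t-s}{st}$. Then $F_t-F_s=I_{1,1}(\psi_a-\psi_b)$, and
\[
\psi_a-\psi_b=(T-u)^{\bar\alpha-1}(T-v)^{-\bar\alpha}\mathbf 1_{\{0\le v\le u,\ b<u\le a\}}.
\]
Writing $C_H=H(2H-1)$, the isometry \eqref{isometry property} together with \eqref{innp fg3-zhicheng0}, as in \eqref{bdshi gt2}, gives
\begin{align*}
\E|F_t-F_s|^2\le{}& C_H^2\int_{(b,a]^2}(T-u_1)^{\lambda-1}(T-u_2)^{\lambda-1}|u_1-u_2|^{2H-2}\\
&\times\Big(\int_{[0,T]^2}(T-v_1)^{-\lambda}(T-v_2)^{-\lambda}|v_1-v_2|^{2H-2}\,\dif v_1\dif v_2\Big)\dif u_1\dif u_2.
\end{align*}
Here we have bounded the $v$-integrals, which run over $[0,u_1]\times[0,u_2]$, by integrals over $[0,T]^2$. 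The inner factor is finite: it equals $\E[\xi_T^2]$ from \eqref{xiT2qiw} with $\alpha$ replaced by $\lambda$, and this needs $\lambda<H$ (for $\lambda\ge H$ one would keep the cutoff $v\le u$ and get an extra power of $T-u$, handled the same way).

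\textbf{Step 2: bound the outer integral.} Use the elementary estimate $\int_{(b,a]}|u_1-u_2|^{2H-2}\dif u_2\le C(a-b)^{2H-1}$, then integrate $(T-u)^{\lambda-1}$ over $(b,a]$. Alternatively, after the substitution $x=\frac1{T-u}$ the $u$-integral becomes
\[
\int_{[s,t]^2}(x_1x_2)^{-\lambda-1}\,\Big|\tfrac1{x_1}-\tfrac1{x_2}\Big|^{2H-2}\dif x_1\dif x_2
=\int_{[s,t]^2}(x_1x_2)^{1-2H-\lambda}\,|x_1-x_2|^{2H-2}\dif x_1\dif x_2 .
\]
Since $x_i\ge\frac1T$ and $1-2H-\lambda<0$, the weight $(x_1x_2)^{1-2H-\lambda}$ is bounded by $C_T$. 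The remaining integral is then $\le C\,|t-s|^{2H}$ for short increments, while for long increments the decaying weight keeps the whole quantity bounded.

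Combining the two regimes to get exactly the exponent $1-\lambda$ uniformly in $s,t$ is the delicate part, and we expect it to be the main obstacle. The approach we will take first is to split into $|t-s|\le1$ and $|t-s|>1$. For $|t-s|\le1$ we use $2H\ge 1-\lambda$. For $|t-s|>1$ we use the weighted bound directly, noting that $\int_s^t x^{-\lambda-1}\dif x\le C$ and interpolating with the $|t-s|^{2H-1}$ factor. If a clean power does not emerge, we fall back on keeping the factor $(T-u)^{\lambda-1}$ exactly and using $\int_b^a(T-u)^{\lambda-1}\dif u\le C(a-b)^{\lambda}$.

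\textbf{Step 3: prove the convergence \eqref{Finfty dyi}.} Now assume $\lambda\in(1-H,H)$. We first show $L^2$ convergence: $G_t$ is Cauchy as $t\to T$. Indeed, by Step 1 with $a\to T$, the tail
\[
\int_{(b,T)^2}(T-u_1)^{\lambda-1}(T-u_2)^{\lambda-1}|u_1-u_2|^{2H-2}\dif u_1\dif u_2
\]
is of order $(T-b)^{2\lambda+2H-2}\to0$, because $\lambda>1-H$. Proposition~\ref{prop.3.2} (third case) confirms that the limit is finite. For almost sure convergence, hypercontractivity turns the tail bound into $\E|F_t-F_s|^p\le C_p\,(s^{-1})^{p(\lambda+H-1)}$ for $t\ge s$. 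Along $s_n=2^n$ this summability yields a.s. convergence by Borel--Cantelli. The oscillation on each block $[2^n,2^{n+1}]$ is controlled by Garsia--Rodemich--Rumsey applied with the H\"older bound \eqref{cha zengliang} and large $p$, where the block lengths are compensated by the decaying weight. Finally, the $L^2$ and a.s. limits coincide, and the limit lies in the $(1,1)$ chaos because that chaos is closed in $L^2$.
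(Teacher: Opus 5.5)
Your Step~1 is sound, and in fact more careful than the paper's display. The paper writes $\Delta'$ with $v_i\ge T-\frac1s$, which is not the support of $\psi_{T-1/t}-\psi_{T-1/s}$. Its bound survives only because it immediately enlarges the $x$-range to $[\frac1T,\infty)$, which is the same as your enlargement of the $v$-integrals to $[0,T]^2$.

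The gap is in Step~2, which is the whole content of \eqref{cha zengliang}. Set
\[
I(s,t):=\int_{[s,t]^2}(x_1x_2)^{1-2H-\lambda}\abs{x_1-x_2}^{2H-2}\,\dif x_1\dif x_2 .
\]
Your bound for $\abs{t-s}\le 1$ is fine. For $\lambda>1-H$ the case $\abs{t-s}>1$ also follows, because $I(\frac1T,\infty)<\infty$. But the integrand is homogeneous of degree $-2H-2\lambda$, so $I(\frac1T,t)$ grows like $t^{2-2H-2\lambda}$ when $\lambda<1-H$, and like $\log t$ when $\lambda=1-H$. There your claim that ``the decaying weight keeps the whole quantity bounded'' is false, and none of your alternatives closes the case $\abs{t-s}>1$, $\lambda\le 1-H$:
\begin{itemize}
\item The ``interpolation'' between $\int_s^t x^{-\lambda-1}\dif x\le C$ and $\abs{t-s}^{2H-1}$ is not an estimate as stated. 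Since $\abs{\frac1{x_1}-\frac1{x_2}}^{2H-2}=(x_1x_2)^{2-2H}\abs{x_1-x_2}^{2H-2}$, growing powers of $x_i$ come back.
\item The first route in $u$-variables integrates one factor $(T-u)^{\lambda-1}$ but does not say what happens to the second singular factor.
\item The fallback via $\int_b^a(T-u)^{\lambda-1}\dif u\le C(a-b)^{\lambda}$ is left uncompleted.
\end{itemize}

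The missing idea is the paper's. Order $x_2\le x_1$ by symmetry and bound only $x_2^{1-2H-\lambda}\le T^{2H+\lambda-1}$. Then enlarge the $x_2$-range to $[\frac1T,x_1]$, so that $\int_{1/T}^{x_1}(x_1-x_2)^{2H-2}\dif x_2\le\frac{x_1^{2H-1}}{2H-1}$ cancels the factor $x_1^{1-2H}$. This gives
\[
I(s,t)\le\frac{2T^{2H+\lambda-1}}{2H-1}\int_s^t x_1^{-\lambda}\,\dif x_1=\frac{2T^{2H+\lambda-1}\big(t^{1-\lambda}-s^{1-\lambda}\big)}{(2H-1)(1-\lambda)}\le\frac{2T^{2H+\lambda-1}}{(2H-1)(1-\lambda)}(t-s)^{1-\lambda}
\]
by subadditivity of $x\mapsto x^{1-\lambda}$. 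The bound is uniform in $s,t$, with no splitting into regimes.

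Your Step~3 is essentially right, and its $L^2$ part is more accurate than the paper's. With the true kernel ($v\in[0,u]$), the same computation gives $\E\abs{G_t-G_s}^2\le C\abs{t-s}^{2(\lambda+H-1)}$ via \eqref{gima2fang jie-1} with parameter $1-\lambda\in(0,H)$. The paper's rate $\abs{t-s}^{2(2H-1)}$ comes from again restricting $v\ge s$.

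For the a.s.\ part, however, \eqref{cha zengliang} by itself only controls oscillations on $[2^n,2^{n+1}]$ by a multiple of $2^{n(1-\lambda)/2}$, which grows. You need either of the following:
\begin{itemize}
\item the block-weighted bound $\E\abs{F_t-F_s}^2\le C\,2^{2n(1-2H-\lambda)}\abs{t-s}^{2H}$ for $s,t\in[2^n,2^{n+1}]$, which makes the oscillations decay like $2^{n(1-H-\lambda)}$;
\item or, more simply, Kolmogorov with hypercontractivity applied to $G$ on $[0,T]$, using the bound just mentioned.
\end{itemize}
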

\begin{proof} Denote $C_H=H(2H-1),\, \beta=2H-2,\,\dif \vec{u}=\dif u_1\dif u_2,\,\dif \vec{v}=\dif v_1\dif v_2$ and integration region
\begin{align*}
   \Delta'&=\set{T-\frac{1}{s}\le v_1\le u_1\le T-\frac{1}{t},T-\frac{1}{s}\le v_2\le u_2\le u_1\le  T-\frac{1}{t}}.
\end{align*}
It follows from the It\^o's isometry and the symmetry that
\begin{align}
    & \E[\abs{F_t-F_s}^2] \notag\\
    &=2C_H^2\RE\int_{\Delta'} (T-u_1)^{\bar{\alpha} -1}(T-v_1)^{-\bar{\alpha}} (T-u_2)^{{\alpha} -1}(T-v_2)^{-{\alpha}} (u_1-u_2)^{\beta}\abs{v_1-v_2}^{\beta} \dif \vec{u} \dif\vec{v}  \label{guji 54}\\
    &=2C_H^2\RE\int_{\Delta''} y_1^{1-\bar{\alpha}-2H} y_2^{1-{\alpha}-2H} (y_1-y_2)^{\beta} x_1^{\bar{\alpha}-2H}x_2^{{\alpha}-2H}\abs{x_1-x_2}^{\beta} \dif \vec{y} \dif\vec{x},\notag
\end{align}where in the last line, we have made the change of variables: $x_i=\frac{1}{T-v_i},\,y_i=\frac{1}{T-u_i},\,i=1,2$ and the notations
$\dif \vec{x}=\dif x_1\dif x_2,\,\dif \vec{y}=\dif y_1\dif y_2$
and
$$ \Delta''=\set{ {s}\le x_1\le y_1\le  {t}, {s}\le x_2\le y_2\le y_1\le   {t}}. $$
By the absolute value inequality and the symmetry, we have that for $s\ge \frac{1}{T}$,
\begin{align}
   \E\left(\abs{F_t-F_s}^2\right)
  &\le 4C_H^2\int_{s\le y_2\le y_1\le t} y_1^{1-{\lambda}-2H} y_2^{1-{\lambda}-2H} (y_1-y_2)^{\beta} \dif \vec{y}\notag\\
  & \quad \times \int_{\frac{1}{T}\le x_1\le x_2} x_1^{ {\lambda}-2H}x_2^{{\lambda}-2H} (x_2-x_1)^{\beta}  \dif\vec{x}\notag\\
  &\le C_T \int_{s\le y_2\le y_1\le t} y_1^{1-{\lambda}-2H} y_2^{1-{\lambda}-2H} (y_1-y_2)^{\beta} \dif \vec{y},\label{eq.58 zhongjian}
\end{align}where in the last line, we use the following fact:
when $0<\lambda<H,\,H\in (\frac12,1)$,
\begin{align*}
   \int_{\frac{1}{T}\le x_1\le x_2} x_1^{ {\lambda}-2H}x_2^{{\lambda}-2H} (x_2-x_1)^{\beta}  \dif\vec{x}<\frac{ B(1-H, 2H-1)}{H-\lambda}T^{2(H-\lambda)} .
\end{align*}
Since for $s\ge \frac{1}{T}$, we have that 
\begin{align}
    \int_{s\le y_2\le y_1\le t} y_1^{1-{\lambda}-2H} y_2^{1-{\lambda}-2H} (y_1-y_2)^{\beta} \dif \vec{y}&\le \int_{s}^t  y_1^{1-{\lambda}-2H}\dif y_1\int_{\frac{1}{T}}^{y_1} y_2^{1-{\lambda}-2H} (y_1-y_2)^{\beta} \dif {y}_2\notag\\
    &\le T^{\lambda+2H-1} \int_{s}^t  y_1^{1-{\lambda}-2H}\dif y_1\int_{\frac{1}{T}}^{y_1}  (y_1-y_2)^{\beta} \dif {y}_2\notag\\
    &\le \frac{T^{\lambda+2H-1}}{2H-1} \int_{s}^t  y_1^{ -{\lambda} }\dif y_1\notag\\
    &=\frac{T^{\lambda+2H-1}}{(2H-1)(1-\lambda)}[t^{1-\lambda}-s^{1-\lambda}]\notag\\
    &\le \frac{T^{\lambda+2H-1}}{(2H-1)(1-\lambda)} (t-s)^{1-\lambda},\label{jbbds-1}
\end{align}where in the last line, we use the inequality
$1-x^{\theta}\le (1-x)^{\theta},\quad \forall x\in (0,1), \,\theta\in (0,1)$.
Substituting the estimate \eqref{jbbds-1} into the estimate \eqref{eq.58 zhongjian}, we obtain the desired result \eqref{cha zengliang}.

Finally, Proposition~\ref{prop.3.2} implies that $\lim_{t\to T}\E[\abs{G_t}^2]<\infty$  when $\RE(\alpha) 
\in(1-H,H)$. Together with the Cauchy criterion,  the following estimate
\begin{align}
    \E[\abs{G_t-G_s}^2]\le C\times\abs{t-s}^{2(2H-1)} 
\end{align} implies the desired result \eqref{Finfty dyi} in the sense of both $L^2(\Omega)$ and a.s.. To show that the above estimate holds true, we denote 
\begin{align*}
    \Delta=\set{s\le v_1\le u_1\le t,s\le v_2\le u_2\le u_1\le  t},
\end{align*}
and compute, in a similar way to equation \eqref{guji 54} as follows.
\begin{align*}
  & \E[\abs{G_t-G_s}^2]\notag\\
  &=  2C_H^2\RE\int_{\Delta} (T-u_1)^{\bar{\alpha} -1}(T-v_1)^{-\bar{\alpha}} (T-u_2)^{{\alpha} -1}(T-v_2)^{-{\alpha}} (u_1-u_2)^{\beta}\abs{v_1-v_2}^{\beta} \dif \vec{u} \dif\vec{v}\notag\\
  &\le   2C_H^2 \int_{[s,t]^4} (T-u_1)^{\lambda -1}(T-v_1)^{-\lambda} (T-u_2)^{{\lambda} -1}(T-v_2)^{-{\lambda}} \abs{u_1-u_2}^{\beta}\abs{v_1-v_2}^{\beta} \dif \vec{u} \dif\vec{v}\notag\\
  &=2 \E[\xi_t-\xi_s]^2\E[\tilde{\xi}_t-\tilde{\xi}_s]^2\notag\\
  &\le C \abs{t-s}^{2(H-\lambda)}\times \abs{t-s}^{2\big(H-(1-\lambda)\big)}=C\abs{t-s}^{2(2H-1)}, 
\end{align*}  where $\xi_t$ and $\tilde{\xi}_t$ are the Gaussian processes given by equation \eqref{xit dingyi} with the parameter $\alpha$ taking $\lambda$ and $1-\lambda$ respectively, and in the last line we use the estimate \eqref{gima2fang jie-1}.
\end{proof}

To conclude this section, we will prove Theorem~\ref{main result}. Denote the complex Young integral in equation \eqref{qidian} by
\begin{align}
    \eta_t:=\int_{0}^t (T-u)^{\bar{\alpha}-1}\bar{\omega}_u\dif \zeta_u.
\end{align} 
Then the ratio process in equation \eqref{qidian} can be written as
\begin{align}\label{error.term}
    \hat{\alpha}_t -\alpha =\frac{C_{T,t}^{-1} \eta_t}{C_{T, t}^{-1} \int_0^t (T-u)^{2\lambda-2}|\omega_u|^2 \dif  u},
\end{align}
where the normalizing factor $C_{T,t}$ is defined by
  \begin{equation}
     C_{T,t} = \begin{cases}
      (T-t)^{2\lambda-1}, \ & {\rm if} \ \lambda \in\left(0,\frac12\right), \\
      \abs{\log(T-t)}, \ & {\rm if} \ \lambda = \frac12.
  \end{cases}
  \end{equation}
 
{\bf Proof of Theorem~\ref{main result}: }
 By the regularity of the process $\omega$, 
 we apply L'H\^{o}pital's rule as $t \to T$ to have the following results:
\begin{align}
 \lim_{t\to T} \frac{1}{(T-t)^{2\lambda-1}}   \int_0^t (T-u)^{2\lambda-2}|\omega_u|^2 \dif  u = \left(1-2\lambda\right)^{-1}\abs{\omega_T}^2, \, a.s. \qquad & {\rm if} \ \lambda \in\left(0,\frac12\right); \label{fenmu case0}\\
 \lim_{t\to T} \frac{1}{\abs{\log(T-t)}}   \int_0^t (T-u)^{2\lambda-2}|\omega_u|^2 \dif  u  = \abs{\omega_T}^2, \, a.s. \qquad & {\rm if} \ \lambda = \frac12. \label{fenmu case0-0}
\end{align}

To show $\hat\alpha_t - \alpha \to 0$ as $t \to T$, it suffices to prove $C^{-1}_{T, t}\eta_t \to 0$. First, we can verify that the conditions of Proposition~\ref{fuguanxi 2-3} are true for the Young integral $\eta_t$. In fact, by equation \eqref{bard bds}, we know that $$\bar{\D}_v \Big((T-u)^{\bar{\alpha}-1}\bar{\omega}_u\Big)= (T-u)^{\bar{\alpha}-1}(T-v)^{ \bar{\alpha}}\mathbf{1}_{\set{0<v<u}}.$$
Since $\lambda\in (0,1)$ and $(1-\lambda)+\lambda =1< 2H$, we have that for all $t\in(0,T)$,
    \begin{align*}
        & \int_0^t \int_0^t \abs{\bar{\D}_v \Big((T-u)^{\bar{\alpha}-1}\bar{\omega}_u\Big)} |u-v|^{2H-2} \dif u \dif v\\
        & \le  \int_0^T (T-u)^{\lambda-1} \dif u\int_0^u (T-v)^{-\lambda} (u-v)^{2H-2} \dif v  = C T^{2H-1},
    \end{align*}
where the last step is by Lemma~\ref{cal.result1}. Together with the regularity of process $\omega$, we find that all the conditions in Proposition~\ref{fuguanxi 2-3} are valid. Thus, by Proposition~\ref{fuguanxi 2-3}, we have that
    \begin{align}
        \eta_t & = \int_0^t\overline{\omega}_u (T-u)^{\overline{\alpha}-1} \delta \zeta_u+H(2H-1)\int_0^t (T-u)^{\overline{\alpha}-1} \int_0^u (T-v)^{-\overline{\alpha}} (u-v)^{2H-2}\dif u \dif v \notag\\
         & =I_{1,1}(\psi_t(u, v))+H(2H-1)\int_0^t  (T-u)^{\overline{\alpha}-1}\dif u \int_0^u (T-v)^{-\overline{\alpha}} (u-v)^{2H-2}\dif v.\label{etabiaosh}
    \end{align}

Second, by Lemma~\ref{cal.result1}, the second term of \eqref{etabiaosh} satisfies 
\begin{align}\label{eq. 42-bizhi}
     \lim_{t\to T}  C_{T,t}^{-1}  \int_0^t  (T-u)^{\overline{\alpha}-1}\dif u \int_0^u (T-v)^{-\overline{\alpha}} (u-v)^{2H-2}\dif v = 0.
\end{align} For the first term in equation \eqref{etabiaosh}, we claim that as $t\to T$,
\begin{equation}\label{i11 guji}
    C_{T, t}^{-1} I_{1,1}(\psi_t) = C_{T, t}^{-1} G_t \to 0.
\end{equation}

\noindent To show the convergence \eqref{i11 guji}, it is sufficient to prove that
\begin{align}\label{i11 guji0}
    \begin{cases}
    \lim_{s\to \infty} \frac{1 }{s^{1-2\lambda}} F_s =0, \, a.s. & {\rm if} \ \lambda \in(0,\frac12),\\ 
 \lim_{s\to \infty} \frac{1}{\log s} F_s =0, \, a.s. \quad & {\rm if} \ \lambda =\frac12.
 \end{cases}
\end{align} It is clear that 
\begin{align}
    \E[\abs{F_s}^2]=E[\abs{G_t}^2],\quad \text{ where} \quad s=\frac{1}{T-t}.
\end{align}
Hence, Proposition~\ref{prop.3.2}, Borel-Cantalli's lemma and the hypercontractivity inequality of the complex Wiener-It\^{o} integral imply that 
\begin{align}\label{i11 guji-zilie}
    \lim_{n\to \infty} \frac{1}{n^{1-2\lambda}} F_n &=0, \, a.s. \quad {\rm for} \ \lambda \in\left(0,\frac12\right).
\end{align}

It follows from Proposition~\ref{zengliang kongzhi}, the hypercontractivity of complex multiple Wiener-It\^o integrals and the Garsia-Rodemich-Rumsey inequality that for any real number $p > \frac{4}{1-\lambda}$, $q >1$ and any integer $n \ge 1\vee \frac{1}{T}$, 
\begin{align*}
\abs{F_t-F_s}\le R_{p,q} n^{q/p} , \qquad \forall \ t,s\in [n,n+1] ,
\end{align*}where $R_{p,q}$ is a random constant independent of $n$ (see \cite{CHW}). Since
\begin{align*}
\abs{\frac{F_s}{s^{1-2\lambda}}}\le \frac{1}{s^{1-2\lambda}}\abs{F_s-F_n}+ \left(\frac{n}{s}\right)^{1-2\lambda}\frac{\abs{F_n}}{n^{1-2\lambda}},
\end{align*} where $n=[s]$ is the biggest integer less than or equal to a real number $s$, we have $\frac{F_s}{s^{1-2\lambda}}$ converges to $0$ almost surely as $s\to \infty$ when $\lambda\in (0,\frac12)$. 

Finally, when $\lambda=\frac12$, Proposition~\ref{prop.3.2} and Proposition~\ref{zengliang kongzhi} imply that in the complex $(1,1)$-th Wiener chaos space of $\zeta$, we have $ \lim\limits_{s\to \infty} \E[\abs{F_s}^2]<\infty.$ This shows that the random variable $F_{\infty}:= \lim\limits_{s\to \infty} F_s$ is finite almost surely.  Hence, we have that $\lim\limits_{s\to \infty} \frac{F_s}{\log s }=0$ almost surely. This finishes the verification of $C_{T, t}^{-1} \eta_t \to 0$ as $t \to T$. Recalling equation \eqref{error.term}, we can conclude that the LSE $\hat\alpha_t$ is strongly consistent.

 {\hfill\large{$\Box$}} 
\begin{remark}\label{rem.3.0001}
From the above proof, when $\RE(\alpha) \in\left(\frac12,H\right)$, we have that      \begin{align*}
    \hat{\alpha}_t-\alpha  \xrightarrow{a.s.} \frac{F_{\infty}+ {B(2H-1,\bar{\alpha})} H T^{2H-1}}{\int_0^T (T-u)^{2\lambda-2}|\omega_u|^2 \dif  u}.
\end{align*} This implies that the LSE $\hat{\alpha}_t$ is no longer consistent when $\RE(\alpha) \in\left(\frac12,H\right)$.
\end{remark}

\section{The asymptotic distribution}

In this section, we will prove Theorem~\ref{main result2} on the asymptotic distribution of the term $\hat{\alpha}_t -\alpha$ after normalizing it in an appropriate way.

 {\bf Proof of Theorem~\ref{main result2}: } 
When $\lambda \in \left(1-H,\frac12\right)$, it follows from equations \eqref{qidian} and \eqref{etabiaosh}
that 
\begin{align*}
   (T-t)^{2\lambda-1}\big( \hat{\alpha}_t -\alpha) &=\frac{\int_{0}^t (T-u)^{\bar{\alpha}-1}\bar{\omega}_u\dif \zeta_u }{\frac{1}{(T-t)^{2\lambda-1}}\int_0^t (T-u)^{2\lambda-2}|\omega_u|^2 \dif  u} \\
   &=\frac{G_t+C_H \int_0^t  (T-u)^{\overline{\alpha}-1}\dif u \int_0^u (T-v)^{-\overline{\alpha}} (u-v)^{2H-2}\dif v }{\frac{1}{(T-t)^{2\lambda-1}}\int_0^t (T-u)^{2\lambda-2}|\omega_u|^2 \dif  u}\\
   &\xrightarrow{d} \frac{(1-2\lambda)[F_{\infty}+ {B(2H-1,\bar{\alpha})} H T^{2H-1} ]}{|\omega_T|^2},
\end{align*} where in the last line, we use the results \eqref{fenmu case0}, \eqref{Finfty dyi} and Lemma~\ref{cal.result1}.

When $\lambda=\frac12$, equations \eqref{qidian}, \eqref{fenmu case0-0}  and \eqref{etabiaosh} yield that
   \begin{align*}
     |\log(T-t)|(\alpha-\hat{\alpha}_t)= \frac{\int_{0}^t (T-u)^{\bar{\alpha}-1}\bar{\omega}_u\dif \zeta_u }{\frac{1}{|\log(T-t)|}\int_0^t (T-u)^{2\lambda-2}|\omega_u|^2 \dif  u}\xrightarrow{d} \frac{F_{\infty}+ {B(2H-1,\bar{\alpha})} H T^{2H-1} }{|\omega_T|^2}.
   \end{align*}

When $\lambda\in (0,1-H)$, equations \eqref{qidian} and \eqref{etabiaosh} imply that 
 \begin{align}
    & (T-t)^{\lambda-H}(\alpha-\hat{\alpha}_t)\notag\\
				&=\frac{(T-t)^{1-H-\lambda}\eta_t}{(T-t)^{1-2\lambda} \int_0^t |\omega _s|^2 (T-s)^{2\lambda-2}\dif s}\notag\\
                &=\frac{(T-t)^{1-H-\lambda}\Big[G_t +C_H \int_0^t  (T-u)^{\overline{\alpha}-1}\dif u \int_0^u (T-v)^{-\overline{\alpha}} (u-v)^{2H-2}\dif v\Big]}{(T-t)^{1-2\lambda} \int_0^t |\omega_s|^2 (T-s)^{2\lambda-2}\dif s},\label{eq.61-n}
 \end{align} where $C_H=H(2H-1)$. We introduce the stochastic integral $A_t:=\int_0^t (T-u)^{\bar{\alpha}-1}\dif \zeta_t$ and the function
\begin{align}\label{phituv}
    \phi_t(u,v)=(T-u)^{\overline{\alpha}-1}(T-v)^{-\overline{\alpha}}\mathbf{1}_{\{0\leq u\leq v\leq t\}}.
\end{align} It follows from the product formula for complex multiple Wiener-It\^o integral and the linearity of complex multiple Wiener-It\^o integral that 
 \begin{align}\label{zhj jg}
     A_t\cdot \overline{\omega}_t= G_t+ I_{1,1}(\phi_t)+C_H\int_{(0,t)^2}(T-u)^{\overline{\alpha}-1}(T-v)^{-\overline{\alpha}} \abs{u-v}^{2H-2}\dif u\dif v.
 \end{align}   Substituting equation \eqref{zhj jg} into equation \eqref{eq.61-n}, we have 
 \begin{align}
    & (T-t)^{\lambda-H}(\alpha-\hat{\alpha}_t)\notag\\
    &=\frac{(T-t)^{1-H-\lambda}\Big[A_t\cdot \overline{\omega}_t-I_{1,1}(\phi_t)-C_H \int_0^t (T-v)^{-\overline{\alpha}} \dif v \int_0^v  (T-u)^{\overline{\alpha}-1}(v-u)^{2H-2} \dif u \Big]}{(T-t)^{1-2\lambda} \int_0^t |\omega_s|^2 (T-s)^{2\lambda-2}\dif s}.\label{guodu dengshi}
 \end{align}
By Lemma~\ref{cal.result1}, we have
\begin{align*}
  \lim_{t \to T} \int_0^t   (T-v)^{-\overline{\alpha}} \dif v \int_0^v  (T-u)^{\overline{\alpha}-1}(v-u)^{2H-2} \dif u = \frac{B(2H-1, 1-\bar{\alpha})}{2H-1}T^{2H-1}.  
\end{align*}
Hence, when $\lambda\in (0,1-H)$,
 \begin{align*}
  \lim_{t\to T} (T-t)^{1-H-\lambda}  \abs{\int_0^t   (T-v)^{-\overline{\alpha}} \dif v \int_0^v  (T-u)^{\overline{\alpha}-1}(v-u)^{2H-2} \dif u} =0.
 \end{align*} Next, Proposition~\ref{last estimeate} implies that 
\begin{align*}
    \limsup_{t\to T} \E\big[\abs{I_{1,1}({\phi}_t)}^2\big]<\infty,
\end{align*}Since $\lambda\in (0,1-H)$, we have
\begin{align}\label{i11psit jixian}
     (T-t)^{1-H-\lambda}I_{1,1}(\phi_t)\xrightarrow{d} 0,\quad \text{as\,\,\,} t\to T.
 \end{align}
Proposition~\ref{prop Y} implies that the complex Gaussian random variable $\tilde{A}_T:=\lim_{t\to T}(T-t)^{1-H-\lambda} A_t $ exists in $L^2$ and 
\begin{align} \label{At jixian}
\E\big[\tilde{A}_T^2\big]=\frac{H\Gamma(2H)}{1-\lambda-H} \RE\left(\frac{\Gamma(2-{\alpha}-2H)}{\Gamma(1-{\alpha})}\right).
\end{align} Moreover, the two complex Gaussian random variables $\tilde{A}_T$ and $\omega_T$ are independent. Hence, we have
\begin{align}\label{ATjixian zuih}
     (T-t)^{1-H-\lambda} A_t\cdot \overline{\omega}_t \xrightarrow{d} \tilde{A}_T \cdot \bar{\omega}_T,\quad \text{as\,\,\,} t\to T.
\end{align}
Finally, by the results of \eqref{fenmu case0}, \eqref{guodu dengshi}-\eqref{i11psit jixian}, \eqref{ATjixian zuih} and Slutsky's lemma, we can compute for $\lambda\in (0,1-H)$, 
\begin{align}
     (T-t)^{\lambda-H}(\alpha-\hat{\alpha}_t)\xrightarrow{d} (1-2\lambda)\frac{\tilde{A}_T }{\omega_T}
\end{align} 
The ratio $\frac{\tilde{A}_T }{\omega_T}$ follows Cauchy distribution, i.e., $(1-2\lambda)\frac{\tilde{A}_T }{\omega_T} \sim (1-2\lambda) CR\left(\frac{\sigma^2_x}{\sigma^2_y}\right)$. The exact value of the scale parameter $\frac{\sigma^2_x}{\sigma^2_y}$ is obtained from equations \eqref{wt2 norm} and \eqref{At jixian}.

Similarly, we can obtain the result for the case of $\lambda=1-H$. This concludes the proof.
{\hfill\large{$\Box$}}  

\section*{Appendix}
In this section, we will present and prove some technical results that have been used in the proofs of the theorems.
\begin{lemma}\label{cal.result1}
  Assume $\RE(\alpha)\in(0, 1)$ and $H>\frac{1}{2}$. We have
  $$\lim_{t\to T}  \int_0^t  (T-u)^{\overline{\alpha}-1}\dif u \int_0^u (T-v)^{-\overline{\alpha}} (u-v)^{2H-2}\dif v = \frac{B(2H-1,\bar{\alpha})}{2H-1}T^{2H-1},$$
  where $B(a, b)=\int_0^1 (1-t)^{a-1}t^{b-1} \dif t$ for $Re(a)>0, Re(b)>0$ is the complex Beta function.
\end{lemma}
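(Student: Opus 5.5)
The plan is to show that the integrand is absolutely integrable over the whole triangle $\{0\le v\le u\le T\}$, so that the limit as $t\to T$ is just the integral over the full triangle. That full integral can then be evaluated in closed form by swapping the order of integration and recognizing a Beta integral.

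\emph{Step 1 (reduction to the full triangle).} Write $\lambda=\RE(\alpha)\in(0,1)$. Since $|(T-u)^{\bar\alpha-1}|=(T-u)^{\lambda-1}$ and $|(T-v)^{-\bar\alpha}|=(T-v)^{-\lambda}$, the modulus of the integrand is
\[
(T-u)^{\lambda-1}(T-v)^{-\lambda}(u-v)^{2H-2}\mathbf{1}_{\{0\le v\le u\le T\}}.
\]
This function is nonnegative. The computation in Step 2, carried out with $\lambda$ in place of $\bar\alpha$, shows that its integral over the triangle equals $B(\lambda,2H-1)T^{2H-1}/(2H-1)<\infty$, using $\lambda>0$ and $2H-1>0$. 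By Tonelli the function is therefore in $L^1$ of the triangle. Dominated convergence applied to $\mathbf{1}_{\{u\le t\}}$ then shows that the limit as $t\to T$ exists and equals the integral over $\{0\le v\le u\le T\}$. Fubini is also justified, so the order of integration may be exchanged freely.

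\emph{Step 2 (evaluation).} Change variables to $x=T-u$ and $y=T-v$, so that the region becomes $0\le x\le y\le T$. The integral becomes
\[
\int_0^T x^{\bar\alpha-1}\int_x^T y^{-\bar\alpha}(y-x)^{2H-2}\,\dif y\,\dif x
=\int_0^T y^{-\bar\alpha}\int_0^y x^{\bar\alpha-1}(y-x)^{2H-2}\,\dif x\,\dif y .
\]
Substituting $x=yr$ in the inner integral gives $y^{\bar\alpha+2H-2}B(\bar\alpha,2H-1)$; this uses the complex Beta integral, which is valid because $\RE\bar\alpha>0$ and $2H-1>0$. The powers of $y$ combine to $y^{2H-2}$, and $\int_0^T y^{2H-2}\,\dif y=T^{2H-1}/(2H-1)$. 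Since $B$ is symmetric, $B(\bar\alpha,2H-1)=B(2H-1,\bar\alpha)$, which is the claimed value.

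\emph{Main obstacle.} The only delicate point is justifying the interchange of limit and integration near the singular corner $u,v\to T$. Both $(T-u)^{\lambda-1}$ and $(u-v)^{2H-2}$ blow up there. This is handled entirely by Step 1: the explicit real computation shows the dominating function has finite integral, and that finiteness rests precisely on the hypotheses $\lambda>0$ and $H>\frac12$.
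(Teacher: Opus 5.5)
Your proof is correct and takes essentially the paper's route: a change of variables turns the double integral into a complex Beta integral times $\int_0^T y^{2H-2}\,\dif y$. The paper substitutes $x=T-v$, $y=u-v$, which yields the nested form directly, whereas you substitute $x=T-u$, $y=T-v$ and then swap the order of integration. Your Step 1 dominated-convergence argument for passing $t\to T$ inside the integral is a useful addition, since the paper replaces the limit by the full-triangle integral without comment.
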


\begin{proof}
    Making the change of variables $x=T-v,\,y=u-v$ implies that for any $ \RE(\alpha)\in(0, 1)$,
\begin{align}
 & \lim_{t\to T}  \int_0^t  (T-u)^{\overline{\alpha}-1}\dif u \int_0^u (T-v)^{-\overline{\alpha}} (u-v)^{2H-2}\dif v \notag\\
    &=\int_0^T  (T-u)^{\overline{\alpha}-1}\dif u \int_0^u (T-v)^{-\overline{\alpha}} (u-v)^{2H-2}\dif v\notag\\
    &=\int_0^T x^{-\bar{\alpha}}\dif x\int_0^x (x-y)^{\overline{\alpha}-1} y^{2H-2}\dif y =\frac{B(2H-1,\bar{\alpha})}{2H-1}T^{2H-1}. 
\end{align}
\end{proof}

\begin{proposition}\label{last estimeate}
    Let $\psi_t(\cdot,\cdot)$ and $\phi_t(\cdot,\cdot)$ be given by \eqref{hehanshu01} and \eqref{phituv}, respectively. Then we have:
    \begin{itemize}
        \item [(i)]  If $\RE{\alpha}\in (1-H,1)$, then
   \begin{align}
    \lim_{t\to T} \E\big[\abs{I_{1,1}({\psi}_t)}^2\big]<\infty,\label{limsup last.02}
\end{align}
 \item [(ii)] If   $\RE{\alpha}\in (0,H)$, then 
    \begin{align}
    \lim_{t\to T} \E\big[\abs{I_{1,1}({\phi}_t)}^2\big]<\infty,\label{limsup last.01}
\end{align}
    \end{itemize}
  
\end{proposition}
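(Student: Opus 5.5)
The plan is to reduce both estimates to the real-valued bound \eqref{xiT2qiw} through the isometry \eqref{isometry property} and a pointwise domination of the kernels. By \eqref{isometry property} with $a=c=b=d=1$, for $f=\psi_t$ or $f=\phi_t$ we have
\begin{align*}
\E\big[\abs{I_{1,1}(f)}^2\big]=C_H^2\int_{[0,T]^4} f(u_1,v_1)\overline{f(u_2,v_2)}\,\abs{u_1-u_2}^{2H-2}\abs{v_1-v_2}^{2H-2}\dif u_1\dif u_2\dif v_1\dif v_2 ,
\end{align*}
where $C_H=H(2H-1)$, exactly as in \eqref{bdshi gt2}. The weight is nonnegative, so the modulus of the integrand is at most $\abs{f(u_1,v_1)}\abs{f(u_2,v_2)}\abs{u_1-u_2}^{2H-2}\abs{v_1-v_2}^{2H-2}$. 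Moreover $\abs{(T-u)^{\bar\alpha-1}(T-v)^{-\bar\alpha}}=(T-u)^{\lambda-1}(T-v)^{-\lambda}$. It therefore suffices to find a real majorant $g(u,v)\ge \abs{\psi_T(u,v)}$ (resp. $\abs{\phi_T(u,v)}$) of product form $a(u)b(v)$ whose $\FH^{\otimes 2}$-type integral is finite.

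\textbf{Step 1: $\psi$, with $\lambda\in(1-H,1)$.} Dropping the indicator naively produces the factor $(T-v)^{-\lambda}$. For $\lambda\ge H$ this factor gives a divergent $\E[\xi_T^2]$, so I will use the ordering $v\le u$ to move exponent across. Since $T-v\ge T-u$, for any $\mu\le\lambda$ we have $(T-v)^{\mu-\lambda}\le (T-u)^{\mu-\lambda}$. Hence, on the support of $\psi_T$,
\begin{align*}
(T-u)^{\lambda-1}(T-v)^{-\lambda}\le (T-u)^{\mu-1}(T-v)^{-\mu}.
\end{align*}
I choose $\mu\in(1-H,\min(\lambda,H))$; this interval is nonempty because $H>\frac12$ and $\lambda>1-H$. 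After dropping the indicator, the majorant integral factorizes as
\begin{align*}
\Big(C_H\int_{[0,T]^2}(T-u_1)^{\mu-1}(T-u_2)^{\mu-1}\abs{u_1-u_2}^{2H-2}\dif u_1\dif u_2\Big)\Big(C_H\int_{[0,T]^2}(T-v_1)^{-\mu}(T-v_2)^{-\mu}\abs{v_1-v_2}^{2H-2}\dif v_1\dif v_2\Big).
\end{align*}
This equals $\E[\xi_T^2]$ with parameter $1-\mu$ times $\E[\xi_T^2]$ with parameter $\mu$. Both parameters lie in $(0,H)$, so each factor is finite by \eqref{xiT2qiw}.

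\textbf{Step 2: $\phi$, with $\lambda\in(0,H)$.} Now $u\le v$, so $T-u\ge T-v$. For $\mu\ge\lambda$ we get $(T-u)^{\lambda-\mu}\le (T-v)^{\lambda-\mu}$, hence
\begin{align*}
(T-u)^{\lambda-1}(T-v)^{-\lambda}\le (T-u)^{\mu-1}(T-v)^{-\mu}.
\end{align*}
I choose $\mu\in(\max(\lambda,1-H),H)$, which is nonempty since $\lambda<H$ and $1-H<H$. The same factorization as in Step 1 then gives finiteness.

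\textbf{Step 3: existence of the limits.} The kernels satisfy $\psi_t=\psi_T\mathbf 1_{[0,t]^2}$, and similarly for $\phi_t$. The majorant from Steps 1--2 is integrable against the weight, so dominated convergence gives that the integral for $\E[\abs{I_{1,1}(\psi_t)}^2]$ converges to the corresponding (finite) integral for $\psi_T$ as $t\to T$; the same holds for $\phi_t$. Equivalently, $\psi_t\to\psi_T$ in $\FH^{\otimes2}$, so $I_{1,1}(\psi_t)$ converges in $L^2$. This yields \eqref{limsup last.02} and \eqref{limsup last.01}.

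The only delicate point is Step 1 in the range $\lambda\ge H$, where the naive splitting fails. The exponent-transfer trick using the ordering constraint of the kernel is what I expect to resolve it. I would double-check that the majorant in that step is applied on the support of $\psi_T$ before the indicator is dropped.
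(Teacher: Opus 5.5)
Your argument is correct, and it takes a different route from the paper's.

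\textbf{The paper's route.} It first uses the conjugation identity \eqref{duiouguanxi} to show that (i) for $\alpha$ is the same statement as (ii) for $\gamma$ with $\bar\gamma=1-\alpha$. It then proves (ii) by symmetrizing the isometry integral and splitting the region into the three ordered simplices $\Delta_1,\Delta_2,\Delta_3$. The first two pieces are evaluated exactly as products of complex Beta functions, and the third is bounded using monotonicity of $|\cdot|^{2H-2}$.

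\textbf{Your route.} You use the ordering $v\le u$ (resp.\ $u\le v$) built into the kernel to transfer exponent between the two variables. This dominates the modulus by a product $(T-u)^{\mu-1}(T-v)^{-\mu}$ with $\mu\in(1-H,H)$. The isometry integral then factorizes into two one-dimensional norms, each finite by \eqref{xiT2qiw}. This is the same factorization the paper uses in Case~1 of Proposition~\ref{prop.3.2}. The exponent shift is what makes it work for every admissible $\lambda$, and it is exactly where $H>\frac12$ enters, since it is needed for $(1-H,H)$ to be nonempty.

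\textbf{Comparison.} Your proof is shorter and treats (i) and (ii) symmetrically, with no conjugation reduction. Because your majorant is integrable, dominated convergence gives both that the limit as $t\to T$ exists and that $\psi_t\to\psi_T$ (resp.\ $\phi_t\to\phi_T$) in $\FH^{\otimes 2}$. Hence $I_{1,1}(\psi_t)$ converges in $L^2$, a point the paper leaves implicit when it moves $\lim_{t\to T}$ inside the subdomain integrals. What the paper's computation buys in exchange is explicit Beta-function expressions for most of the limiting second moment, but these constants are not used anywhere else.
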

 \begin{proof}
 It follows from equations \eqref{chongjifengongegx}-\eqref{duiouguanxi} that $\overline{I_{1,1}({\phi}_t)}=I_{1,1}(g_t)$, where 
 \begin{align*}
     g_t(u,v)=\overline{\phi_t(v,u)}&=(T-u)^{-{\alpha}}(T-v)^{{\alpha}-1}\mathbf{1}_{\{0\leq v\leq u\leq t\}}\\
     &=(T-u)^{\bar{\gamma}-1}(T-v)^{-\bar{\gamma}}\mathbf{1}_{\{0\leq v\leq u\leq t\}}, 
 \end{align*}  
 by setting $\bar{\gamma}=1-{\alpha}$. Therefore, by the definition of the function $\psi_t(\cdot, \cdot)$, statement (ii) is equivalent to statement (i). Thus, it suffices to prove (ii).
 
 Denote the constants $C_H=H(2H-1)$ and $\beta=2H-2$, and the integration regions $\Delta_0:=\set{0\le u_1\le v_1\le t, 0\le u_2\le v_2\le t}$ and $\Delta_0':=\set{0\le u_1\le v_1\le v_2, 0\le u_2\le v_2\le t}$.
    By It\^o's isometry and symmetry, we have 
    \begin{align}
      & \E\big[\abs{I_{1,1}({\phi}_t)}^2\big]\notag\\
      &=C_H^2\int_{\Delta_0}(T-u_1)^{\overline{\alpha}-1}(T-v_1)^{-\overline{\alpha}} (T-u_2)^{ {\alpha}-1}(T-v_2)^{- {\alpha}} \abs{u_1-u_2}^{2H-2}\abs{v_1-v_2}^{2H-2}\dif \vec{u}\dif \vec{v}\notag\\ 
      &=2C_H^2\RE\Bigg[\int_{\Delta_0'}(T-u_1)^{\overline{\alpha}-1}(T-v_1)^{-\overline{\alpha}} (T-u_2)^{ {\alpha}-1}(T-v_2)^{- {\alpha}} \abs{u_1-u_2}^{\beta}\abs{v_1-v_2}^{\beta}\dif \vec{u}\dif \vec{v}\Bigg].\label{zuihou1}
    \end{align} We divide the domain of integration $\Delta_0'$ into three subdomains:
    \begin{align*}
        \Delta_1:=&\set{0\leq u_2\leq u_1\leq v_1\leq v_2\leq t},\quad \Delta_2:=\set{0\leq u_1\leq u_2\leq v_1\leq v_2\leq t},\\
        {\rm and} \ \Delta_3:=& \set{0\leq u_1\leq v_1\leq u_2\leq v_2\leq t}.
    \end{align*}
 Making the change of variables $x=T-v_2,y=T-v_1, z=T-u_1, w=T-u_2$, we have 
\begin{align}
     & \lim_{t \to T} \int_{\Delta_1}(T-u_1)^{\overline{\alpha}-1}(T-v_1)^{-\overline{\alpha}} (T-u_2)^{ {\alpha}-1}(T-v_2)^{- {\alpha}} \abs{u_1-u_2}^{\beta}\abs{v_1-v_2}^{\beta}\dif \vec{u}\dif \vec{v}\notag\\
    = \, & \lim_{t \to T} \int_0^T w^{\alpha-1} \dif w \int_0^w  z^{\bar{\alpha}-1}(w-z)^{2H-2}\dif z\int_0^z y^{-\bar{\alpha} } \dif y \int_0^y x^{-{\alpha}} (y-x)^{2H-2}\dif x, \notag\\
    = \, & \frac{B(1-{\alpha},2H-1)B(2H-\alpha, 2H-1)}{4(2H-1)(H-\lambda )} T^{4H-2},\label{llas.050-2}
\end{align}
where $B(\cdot, \cdot)$ is the complex Beta function. Making the change of variables $x=T-v_2,y=T-v_1, z=T-u_2, w=T-u_1$, we have
\begin{align}
    & \lim_{t \to T} \int_{\Delta_2}(T-u_1)^{\overline{\alpha}-1}(T-v_1)^{-\overline{\alpha}} (T-u_2)^{ {\alpha}-1}(T-v_2)^{- {\alpha}} \abs{u_1-u_2}^{\beta}\abs{v_1-v_2}^{\beta}\dif \vec{u}\dif \vec{v}\notag\\
    = \, & \frac{B(1-{\alpha},2H-1)B(2H-\bar{\alpha} , 2H-1)}{4(2H-1)(H-\lambda)} T^{4H-2}.
\end{align}
Making the change of variables $x=T-v_2,y=T-u_2, z=T-v_1, w=T-u_1$, we have
\begin{align}
    &\lim_{t \to T} \int_{\Delta_3}(T-u_1)^{\overline{\alpha}-1}(T-v_1)^{-\overline{\alpha}} (T-u_2)^{ {\alpha}-1}(T-v_2)^{- {\alpha}} \abs{u_1-u_2}^{\beta}\abs{v_1-v_2}^{\beta}\dif \vec{u}\dif \vec{v}\notag\notag\\
    = \, & \int_{0\le x\le y\le z\le w\le T} w^{\bar{\alpha}-1}z^{-\bar{\alpha}}y^{\alpha-1}x^{-\alpha}(w-y)^{2H-2}(z-x)^{2H-2}\dif x \dif y\dif z\dif w,
\end{align}where the absolute value of the above integral is bounded by 
\begin{align}
    &\int_{0\le x\le y\le z\le w\le T} w^{ {\lambda}-1}z^{- {\lambda}}y^{\lambda-1}x^{-\lambda}(w-z)^{2H-2}(y-x)^{2H-2} \dif x\dif z
    \notag\\&=\frac{B(1-\lambda,2H-1)B(2H-\lambda,2H-1)}{2 (2H-1)^2}T^{4H-2}.\label{llas.050}
\end{align}
Substituting the above results \eqref{llas.050-2}-\eqref{llas.050} into equation \eqref{zuihou1}, we conclude that the inequality \eqref{limsup last.01} holds when $\lambda\in (0,H)$.   
 \end{proof}
 
\noindent  {\bf Acknowledgments}:
Y. Chen is supported by NSFC (12461029) and the PhD Research Startup Fund of Baoshan University. Y. Li is supported by the Youth Project of Hunan Provincial Natural Science Foundation of China (2024JJ6417) and Project of Education Department of Hunan Province (24B0187).

\noindent
\bibliographystyle{plain}

\end{document}